\documentclass[10pt,twoside]{article}
\usepackage{graphics}
\usepackage{amsmath, amsfonts, amsthm, amssymb, amscd,a4wide}
\usepackage{longtable}
\usepackage{booktabs}
\usepackage{hyperref}
\usepackage{url}
\usepackage{mathrsfs} 
\usepackage{epsfig, subfig}
\usepackage{caption}
\captionsetup{figurewithin = section} 
\bibliographystyle{plain}

%
\usepackage{slashed}
\usepackage{xcolor}
\usepackage{hyperref}
\hypersetup{linktoc = all}                
\hypersetup{hidelinks}
\hypersetup{bookmarksnumbered}

\textheight = 681pt
\textwidth = 495pt
\oddsidemargin = 1pt
\evensidemargin = 7pt
\topmargin = -55pt
\headheight = -1pt
\setlength{\headheight}{10.5pt}


\newtheorem{definition}{Definition}[section]

\newtheorem{lemma}[definition]{Lemma}

\newtheorem{proposition}[definition]{Proposition}

\numberwithin{equation}{section}
\numberwithin{table}{section}
\numberwithin{figure}{section}

\newcommand \Yb {\overline Y}
\newcommand \Ybar {\overline Y}
\newcommand \trans {\text{tran}}
\newcommand \loc {\text{loc}}
%
\newcommand \hchi {\widehat \chi}
\newcommand \erf {\text{erf}}

\newcommand \hvarphi {\widehat \varphi}
\newcommand \hlambda {\widehat \lambda}

\newcommand \norm {\text{norm}}
\newcommand \Knorm {K^\norm}
\newcommand \anorm {a^\norm}

\newcommand \bse {\begin{subequations}}
\newcommand \ese {\end{subequations}}

\usepackage[most]{tcolorbox}

\usepackage[most]{tcolorbox} 
\newtcolorbox{fini}{%
     enhanced, breakable, size=minimal, colframe=white, parbox=false, after={\par\vspace{2\baselineskip}}, 
     before upper={\indent}, colback=white, 
     overlay = {\draw[line width=2pt] (frame.north east) -|
                       ([xshift=3mm]frame.east)|-(frame.south east);},
     overlay first={\draw[line width=2pt] (frame.north east) -|
                           ([xshift=3mm]frame.south east);},
     overlay middle={\draw[line width=2pt] ([xshift=3mm]frame.north east) -- 
                              ([xshift=3mm]frame.south east);},
     overlay last={\draw[line width=2pt] ([xshift=3mm]frame.north east)|-
                          (frame.south east);},
}
\newtcolorbox{adone}{%
     enhanced, breakable, size=minimal, colframe=white, parbox=false, after={\par\vspace{2\baselineskip}}, 
     before upper={\indent}, colback=white, 
     overlay = {\draw[densely dotted, line width=2pt] (frame.north east) -|
                       ([xshift=3mm]frame.east)|-(frame.south east);},
     overlay first={\draw[densely dotted, line width=2pt] (frame.north east) -|
                           ([xshift=3mm]frame.south east);},
     overlay middle={\draw[densely dotted, line width=2pt] ([xshift=3mm]frame.north east) -- 
                              ([xshift=3mm]frame.south east);},
     overlay last={\draw[densely dotted, line width=2pt] ([xshift=3mm]frame.north east)|-
                          (frame.south east);},
}
\newtcolorbox{plfok}{%
     enhanced, breakable, size=minimal, colframe=white, parbox=false, after={\par\vspace{2\baselineskip}}, 
     before upper={\indent}, colback=white, 
     overlay = {\draw[dashed, line width=2pt] (frame.north east) -|
                       ([xshift=3mm]frame.east)|-(frame.south east);},
     overlay first={\draw[dashed, line width=2pt] (frame.north east) -|
                           ([xshift=3mm]frame.south east);},
     overlay middle={\draw[dashed, line width=2pt] ([xshift=3mm]frame.north east) -- 
                              ([xshift=3mm]frame.south east);},
     overlay last={\draw[dashed, line width=2pt] ([xshift=3mm]frame.north east)|-
                          (frame.south east);},
}

\newcommand \Yt {\widetilde Y}
 
\newcommand \per {\text{per}}
\newcommand \seed {\text{seed}}

\newcommand \Kseed {K^\seed}
\newcommand \Kper {K^\per}


\newcommand \Lbf {\mathbf L} 

\newcommand \EEE E

\newcommand \Omegab {\overline \Omega}

\newcommand \Cbf {\mathbf C}

\newcommand \chih {\widehat \chi}
\newcommand \varphih {\widehat \varphi}

\newcommand \bei {\begin{itemize}}
\newcommand \eei {\end{itemize}}

\newcommand \Span {\text{Span}}

\newcommand \be   {\begin{equation}}
\newcommand \bel {\be\label}
\newcommand \ee   {\end{equation}}

\newcommand \supp {{\text{supp}}}
\newcommand \Id   {{\text{Id}}}

\newcommand \ZZ    {\mathbb{Z}}
\newcommand \RR    {\mathbb{R}}
\newcommand \NN    {\mathbb{N}}

\newcommand \Dcal    {\mathcal{D}}

\newcommand \Ccal    {\mathcal{C}}

\newcommand \Hcal    {\mathcal H}
\newcommand \Hcalt   {\widetilde{\mathcal H}}

\newcommand \Kcal    {\mathcal{K}}

\newcommand \RN    {{\RR^N}}

\newcommand \del   {\partial}

\newcommand \la         \langle
\newcommand \ra     \rangle

\newcommand \RD {{{\mathbb R}^D}}


\begin{document} 

\title{Mesh-free error integration in arbitrary dimensions: 
\\
a numerical study of discrepancy functions} 

\author{Philippe G. LeFloch\footnote{Laboratoire Jacques-Louis Lions, Centre National de la Recherche Scientifique, Sorbonne Universit\'e, 4 Place Jussieu, 75252 Paris, France. Email: {\tt contact@philippelefloch.org}.}
\, 
 and Jean-Marc Mercier\footnote{MPG-Partners, 136 Boulevard Haussmann, 75008 Paris, France. Email: {\tt jean-marc.mercier@mpg-partners.com.}
\newline
Preprint version: October 2019. Revised: November 2019.
}
}

\date{}

\maketitle

\begin{abstract}   
We are interested in mesh-free formulas based on the Monte-Carlo methodology for the approximation of multi-dimensional integrals, and we investigate their accuracy when the functions belong to a reproducing-kernel space. A kernel typically captures regularity and qualitative properties of functions ``beyond'' the standard Sobolev regularity class. We are interested in the issue whether quantitative error bounds can be a priori guaranteed in  applications (e.g.~mathematical finance but also scientific computing and machine learning).
Our main contribution is a numerical study of the error discrepancy function based on a  comparison between several numerical strategies, when one varies the choice of the kernel, the number of approximation points, and the dimension of the problem.
We consider two strategies in order to localize to a bounded set the standard kernels defined in the whole Euclidian space (exponential, multiquadric, Gaussian, truncated), namely, on one hand the class of {\sl periodic kernels} defined via a discrete Fourier transform on a lattice and, on the other hand, a class of {\sl transport-based kernels}.
First of all, relying on a Poisson formula on a lattice, together with heuristic arguments, we discuss the derivation of theoretical bounds for the discrepancy function of periodic kernels.
Second, for each kernel of interest, we perform the numerical experiments that are required in order to generate the optimal distributions of points and the discrepancy error functions. Our numerical results allow us to validate our theoretical observations and 
provide us with quantitative estimates for the error made with a kernel-based strategy as opposed to a purely random strategy. 
\end{abstract}

%
%

\section{Introduction}
\label{section:introd}

\vskip.05cm \par{\bf An error approximation formula.}
We are motivated here by applications to partial differential equations arising continuum physics,  including the development of mesh-free methods in fluid dynamics and material sciences \cite{BFBL,GuLiu,Koester, LiLiu,Liu-2003,Nakano,OhDavis,ZhouLi}.
Specifically, we are interested in approximating multi-dimensional integrals via Monte-Carlo-type formula and deriving error estimates, in which the dependency with respect to the dimension of the problem and other important parameters is specified in a  
quantitative manner. By revisiting this problem of multivariate integration, our purpose is to clarify the derivation and validity of such estimates whose importance has been highlighted in recent years in artificial intelligence, for mesh-free computations of partial differential equations, and mathematical finance. The existing literature emphasizes the role of Sobolev-type spaces, while we would like here to stress the importance of kernel-based Hilbert spaces. In many applications, one is interested in preserving certain a priori structure that are available a priori and the choice of a kernel is dictated by properties (symmetry, scaling, regularity, decay, etc.) that should be incorporated in the approximation algorithms. Therefore, it is desirable to have a flexible framework that encompasses a wide class of kernels, as we consider in the present paper. 

More specifically, within a given Hilbert (or Banach) space we seek to optimize the choice of the interpolation points in an integral approximation formula and establish a sharp error estimate within the chosen class of regularity and decay. 
Two parameters are of primary interest, namely, the dimension $D \geq 1$ of the problem and the number of interpolation points $N \geq 1$, and it is essential to have quantitative estimates with a specified dependency in $N,D$ that can be determined from the kernels of interest. 
In the present paper, we contribute to this general objective and provide a systematic study and comparison of several classes of kernels, which we refer to as periodic kernels  and transported kernels. Our periodic framework for periodic kernels is motivated by work by Cohn and Elkies \cite{CohnElkies} who studied the problem of sphere packing. Our result depends upon a ``kernel density'' function which arises as a key factor in a quantitative bound. We build here on many earlier works on the subject, including contributions in approximation theory \cite{BBO,FGE, FGE2,MN,Opfer,Salehi,Wendland,Wendland-book}. 


\vskip.05cm \par{\bf The discrepancy function associated with a kernel.} 
To any kernel $K: \Omega \times \Omega \to \RR$ defined on a bounded and open subset $\Omega \subset \RD$
and satisfying a positivity condition (see Section~\ref{section-21}), we associate a Banach space $\Hcal_K^{s,p}(\Omega)$ of real-valued functions defined on $\Omega$ with regularity exponent $s>0$ and integrability exponent $p \in [1, +\infty)$. 
(More generally, the Lebesgue measure could be replaced by a probability measure.) 
Then, an ``abstract'' error integration estimate reads, for any $N \geq 1$ and any function $\varphi \in \Hcal_K^{s,p}(\Omega)$,
\bel{eq:20D} 
\inf_{x^1, \ldots, x^N \in \Omega}
\Big| {1 \over |\Omega|} \int_\Omega  \varphi(x) \, dx - {1 \over N}  \sum_{1 \leq n \leq N}  \varphi(x^n) \Big|
\leq 
E_K^{s,p}(N,D) 
\, \|\varphi\|_{\Hcal_K^{s,p}(\Omega)}, 
\ee 
in which the {\it discrepancy function} $E_K^{s,p}(N,D)$ is independent of $\varphi$. Hence, \eqref{eq:20D} provides us ---in the class of functions under consideration--- a factorization of the error in two contributions:   $\|\varphi\|_{\Hcal_K^{s,p}(\Omega)}$ measures the regularity of the function while  
the discrepancy function is related to the best distribution of $N$ points in $\Omega$. The challenge is to control  $E_K^{s,p}(N,D)$, which can be expressed in several forms: 
\bei 

\item[1.] In the physical space $\Omega$, the function $E_K^{s,p}(N,D)$ can be formulated with a pseudo-distance associated with the kernel. 

\item[2.] In suitable spectral variables determined from an operator naturally associated with the kernel, the function $E_K^{s,p}(N,D)$ takes a rather explicit form involving the eigenfunctions and eigenvectors of this operator. 

\eei 

\noindent However, both formulations are difficult to work with directly ---except in dimension $D=1$. So, we introduce below a third standpoint which is more efficient in order to control and its dependency with respect to $D$ and $N$, that is, we introduce the class of ``lattice-based'' kernels (in a tensorial form), as we call them. In this context, we can express the function $E_K^{s,p}(N,D)$ via:  

\bei 

\item[3.] a Poisson formula in dual discrete Fourier variables associated with a lattice (see next section).  

\eei 
\noindent Interestingly, for this latter class of kernels, quantitative estimates can be established that involve the the notion of a ``lattice density'' function, as we explain it in this paper, and shed some light on the problem of the curse of dimensionality.  A priori and quantitative error bounds are obtained at any order of accuracy at the expense of possibly increasing the regularity of the functions under consideration. Importantly for the applications, the error function is controlled quantitatively in a given functional framework.

\vskip.05cm \par{\bf Evaluation of the discrepancy function.}  
We focus attention on a selected list of kernels which we construct by a nonlinear transformation of four translation-invariant kernels,  that is, of the form $K(x,y) = \varphi(x-y)$. We choose kernels that are commonly used in the applications, namely the exponential\footnote{which is sometimes also refered to as the Mat\'ern kernel}, multiquadric, Gaussian, and truncated kernels. Their Fourier transform $\hvarphi$ defined on $\RD$ is known explicitly and is listed in Table~\ref{FDK}. 

These kernels are defined in $\RD$ and we proceed by ``localizing'' them to a bounded domain $\Omega$,
taken to be the unit cube $[0,1]^D$ for simplicity in the presentation. We propose two methods for such a localization of a kernel $K$ defined on $\RD\times \RD$: 
\bei 
\item The {\it periodic version} $\Kper$ of $K$ defined from a discrete Fourier transform.

\item The {\it transported version} $K^\trans$ of $K$ defined via a nonlinear transport map. 

\eei
This provides us with eight kernels (listed in Table \ref{FDK2}) and our main purpose is to investigate the discrepancy function associated with each of them. 
 
In principle, we could use numerically any one of the three expressions of the error function which we derive below and attempt to minimize it over the set of $Y$. In most cases, this requires a computation which, in general, cannot be done explicitly and a numerical integration of this function would be very costly, especially in large dimensions. We discuss this below. In particular, due to the (non-convex) form of the kernel,  minimizing the error function in the physical space is computationally challenging. 
By introducing a periodic version based on the discrete Fourier transform, we arrive at an expression that is computationally tractable. 
We are able to make comparisons between these kernels and investigate the rate of convergence while comparing with the case when the points are randomly chosen. Our numerical results confirm and support our theoretical discussion. 
 

\vskip.05cm \par{\bf Applications and perspectives.}
The material in this paper should be useful for analyzing mesh-free methods for computing solutions to partial differential equations and deriving quantitative bounds for algorithms used in pattern recognition and artificial intelligence. The estimate discussed here provides us with a key building block in order to  establish an error analysis of the transported mesh-free method presented in the companion paper \cite{PLF-JMM-4}. The method therein can be regarded as a generalization of the Lagrangian mesh-free method use in computational fluid dynamics, but also allows to include Navier-Stokes-type diffusive terms. 

Most of the literature on error integration estimates is focused on functions with Sobolev regularity while we are interested here in functions with regularity adapted to specific applications. For instance, the standard choice of radially-symmetric kernels leads to functional spaces that are variants of Sobolev spaces and, in particular, are invariant by translations. Allowing more general kernels allows one to describe local (direction-dependent) properties of functions. 
For instance, a kernel we discuss below is adapted to measure the regularity of functions of the form $\varphi =\sum_{0<n_1< \ldots< n_k \le D } \varphi_{n_1,\ldots,n_k}(x_{n_1},\ldots,x_{n_k})$, relevant in mathematical finance.  
The strategy in \cite{PLF-JMM-2,PLF-JMM-4} is now applied in industrial applications \cite{JMM-SM} and its accuracy can be explained in the light of the present study. 
This is relevant when considering the valuations of complex financial products 
(including the so-called American exercising) written on a large number of underlyings, and 
aiming at computing rapidly complex risk measures; see \cite{PLF-JMM-2}.


\vskip.05cm \par{\bf Outline of this paper.}
In Section \ref{section--2}, we present some basic material on reproducing kernel spaces.  In Section \ref{section--3}, we discuss our methodology for constructing the two classes of kernels of main interest. In Section~\ref{sec-select}, the kernels studied in the present paper are presented and some their properties  discussed. In Section \ref{section--4}, we derive several expressions of the discrepancy function, depending whether physical, spectral, or Fourier variables are used and, next, in Section~\ref{sec-opti} we derive estimates on the discrepancy error. In Section~\ref{sec-numerics}, we present and discuss our numerical results for each of the kernels of interest.


\begin{center}
\begin{table}
\centering
\begin{tabular}{|l||c||c||c||c|}
  \hline
    & exponential (E) & multiquadric (M) & Gaussian (G) & truncated (T)
\\
  \hline
&&&&
\\
   $\chi$ & $\exp{(-|x|_1)}$ 
& $(1+|x|^2)^{-(D+1)/2}$ & $\exp(-|x|^2/2)$ & $\sup(1-|x|,0)^D$ \\
  \hline 
&&&&
\\
   $\hchi$ & $(1+|\xi|^2)^{-(D+1)/2}$ & $\exp(-|\xi|)$ & $\exp(-|\xi|^2/2)$ &  (see \cite{Wendland-book}) \\
  \hline
\end{tabular}
\caption{Four kernels and their Fourier transforms on $\RD$}
\label{FDK}
\end{table}
\end{center}  

\

\begin{table}
\hskip-.5cm
\begin{tabular}{|l||c||c||c||c|}
  \hline
    & exponential (E) & multiquadric (M) & Gaussian (G) & truncated (T)
\\
  \hline
&&&&
\\
\text{Tra}
& $\exp(-|\erf^{-1}(x)|_1)$ & $\big( 1+|\erf^{-1}(x)|^2 \big)^{-(D+1)/2}$ & $\exp(-|\erf^{-1}(x)|^2/2)$ & $\sup\big( 1-|\erf^{-1}(x)|,0 \big)^D$ 
\\
  \hline
&&&&
\\
\text{Per}
& $\sum \exp(- |x+\alpha|_1)$ & $\sum (1+|x+\alpha|^2)^{-(D+1)/2}$ & $\sum \exp(-|x+\alpha|^2/2)$ & $\sum \sup(1-|x+\alpha|,0)^D$ 
\\
  \hline
\end{tabular}

\caption{Transported (Tra) and periodic (Per) kernels on $[-1,1]^D$ (sum over $\alpha \in \ZZ^D$)}
\label{FDK2}
\end{table}


\section{Functional framework based on a reproducing kernel}
\label{section--2} 

\subsection{Discrete setup}
\label{section-21}

\vskip.05cm \par{\bf The class of admissible kernels.}
Since we are primarily interested in kernels defined on a bounded set, in the present section we restrict attention  to this class ---although we will allow ourselves to manipulate kernels defined
on the whole Euclidian space and treated as ``seed data'' in order to generate the kernels of actual interest.
A reproducing kernel provides a convenient way to generate a broad class of Hilbert spaces (or, more generally, Banach space); cf.~\cite{FGE,Wendland}.  A bounded and continuous function $K: \Omega \times \Omega \to \RR$ on a bounded open set  $\Omega \subset \RD$ 
is called an {\it admissible kernel} if it satisfies
 (1) the {\it symmetry property:} $K(x,y) = K(y,x)$ for all $x, y \in \Omega$, and 
(2) the {\it positivity property:} for any collection of $N$ distinct points $Y = (y^1, \ldots, y^N)$ in $\Omega$, the symmetric matrix $K(Y,Y) = \big( K(y^m,y^n) \big)_{1 \leq n,m \leq N}$ is positive definite in the sense that $a^T K(Y,Y) a > 0$ for all $a \in \RN \setminus \{0\}$. 
It is said to be {\it uniformly positive} if there exists a uniform constant $c>0$ such that for any collection of distinct points $Y$ one has $a^T K(Y,Y) a \geq c \, |a|^2$ for all $a \in \RN$.

Clearly, any admissible kernel also satisfies  
\be
\aligned
& K(x,x) \geq 0, 
\qquad 
&&  K(x,y)^2 \leq K(x,x) \, K(y,y),  
\qquad x,y \in \Omega. 
\endaligned
\ee
This implies that $2 \, K(x,y) \leq K(x,x) + K(y,y)$ and, therefore, the non-negative function 
\bel{eq:defD}
D(x,y):= K(x,x) + K(y,y) - 2 K(x,y) \geq 0, 
\qquad x,y \in \Omega,
\ee
can be interpreted as a ``pseudo-distance'' in view of the properties $D(x,x) = 0$ and $D(x,y) = D(y,x)$. (The triangle inequality need not hold.) Many examples of admissible kernels will be presented in the next two sections.


\vskip.05cm \par{\bf Finite dimensional framework.}
Given any finite collection of points $Y = (y^1, \ldots, y^N)$ chosen in $\Omega$, we introduce the (finite dimensional) vector space $\Hcal_K^Y(\Omega)$ consisting of all linear combinations of the {\it basis functions} $x \mapsto K(x, y^n)$. In other words, we set  
\bel{equa:HKY}
\Hcal_K^Y(\Omega):= \Big\{\sum_{1 \leq m \leq N} a_m K(\cdot, y^m) \,  / \,  a = (a_1, \ldots, a_N) \in \RR^N  \Big\}. 
\ee 
Since $K$ is continuous, $\Hcal_K^Y(\Omega)\subset \Ccal(\Omega)$ embeds into the space $\Ccal(\Omega)$ of all continuous functions on $\Omega$.  To any two functions 
$\varphi = \sum_{1 \leq m \leq N} a_m K(\cdot, y^m)$ and $\psi = \sum_{1 \leq n \leq N} b_n K(\cdot, y^n)$, we associate the bilinear expression 
\label{equa-bilin-norm}
\bel{Npsi}
\la \varphi, \psi \ra_{\Hcal_K^Y(\Omega)} 
:= a^T K(Y,Y) b 
= 
 \sum_{1 \leq m \leq N} \sum_{1 \leq n \leq N} a_m b_n K(y^m,y^n)
\ee
(with $a = (a_m)$, etc.), which endows the space $\Hcal_K^Y(\Omega)$ with a Hilbertian structure with norm  
$\| \varphi \|_{\Hcal_K^Y(\Omega)}^2 :=   a^T K(Y,Y) a$. 
Now, the so-called {\it reproducing kernel property} (immediate from \eqref{Npsi})
\bel{eq:repkp} 
\la K(\cdot, y^m), K(\cdot, y^n) \ra_{\Hcal_K^Y(\Omega)} 
= K(y^m, y^n),
\ee
allows one to relate the coefficients of the decomposition of a function $\varphi = \sum_{1 \leq m \leq N} a_m K(\cdot, y^m)$ to its scalar product with the basis functions, namely
\be
\aligned
\la \varphi, K(\cdot, y^n)  \ra_{\Hcal_K^Y(\Omega)}
& = 
\sum_{1 \leq m \leq N} a_m \la K(\cdot, y^m), K(\cdot, y^n) \ra_{\Hcal_K^Y(\Omega)}
  =
\sum_{1 \leq m \leq N} a_m K(y^m, y^n) = a^T K(Y, y^n). 
\endaligned
\ee


\vskip.05cm \par{\bf Discrete spectral decomposition.} 
Since $K(Y,Y)$ is a symmetric and positive definite matrix, it admits real and positive eigenvalues, denoted by $\lambda_Y^n >0$, together with a basis of right-eigenvectors $\zeta_Y^n \in \RD$ (with $n=1,2, \ldots, N$), satisfying 
\be
K(Y,Y) \zeta_Y^n = \lambda_Y^n \, \zeta_Y^n, \qquad n=1,2, \ldots, N.
\ee
This decomposition is useful in order to define, for any $s \geq 0$ and $p \geq 1$, the finite dimensional Banach space $\Hcal_K^{Y,s,p}(\Omega)$ of all functions of the form $\sum_{1 \leq m \leq N} a_m K(\cdot, y^m)$ with finite norm 
\bel{HKSs}
\| \varphi \|^p_{\Hcal_K^{Y,s,p}(\Omega)}  
:= \sum_{1 \leq n \leq N} (\lambda_Y^n)^{-sp} \la \varphi, \zeta_Y^n \ra_{\Hcal_K^Y(\Omega)}^p.
\ee 


\vskip.05cm \par{\bf Projection operator.} 
Consider a function $f \in \Ccal(\Omega)$ and introduce the vector $f(Y) = \big( f(y^1), \ldots, f(y^N) \big)$ consisting of the values of this function at the given points. We define its projection $P(f)$ into the discrete space $\Hcal_K^Y(\Omega)$ by setting  
\begin{subequations}
\be
P_Y(f):= a^T K(\cdot, Y), 
\qquad a:=  K(Y,Y)^{-1} f(Y). 
\ee
Clearly, this defines a projection since $P \circ P(f) = P(f)$ and, in fact, $P(\varphi) = \varphi$ for any function $\varphi = a^T K(\cdot, Y)$ belonging to the space $\Hcal_K^Y(\Omega)$. 
Moreover, the norm of this projection reads 
\be
\| P_Y(f) \|_{\Hcal_K^Y(\Omega)}^2 =  f(Y)^T K(Y,Y)^{-1} f(Y). 
\ee
\end{subequations}
%


\vskip.05cm \par{\bf The partition of unity.} 
A basis is naturally associated with the discrete space $\Hcal_K^Y(\Omega)$, that is, 
$N$ functions $\theta_Y^n: \Omega \to \RR$ taking the values $0$ or $1$ at the points of the set $Y$. Precisely, writing $\theta_Y := (\theta^1_Y, \ldots, \theta^N_Y)$, we define  
\bse
\bel{PU}
\theta_Y:= K(Y,Y)^{-1} K(Y, \cdot). 
\ee
It follows that 
$\big( \theta_Y(y^m) \big)_{1 \leq n,m \leq N} = K(Y,Y)^{-1} K(Y,Y) = \Id$ (the identity matrix), and using the Kronecker symbol, we have $\theta_Y^n (y^m) = \delta_{nm}$, while the scalar product of any two basis functions is 
\bel{410}
\big\la \theta_Y^m,  \theta_Y^n \big \ra_{\Hcal_K^Y(\Omega)} 
=  K^{-1}(y^m, y^n).  
\ee 
This partition of unity is useful in expressing the projection of a function $f \in \Ccal(\Omega)$, namely 
$P_Y(f) = \sum_{1 \leq n \leq N} f(y^n) \theta_Y^n$.  
\ese


\subsection{Continuous setup}

\vskip.05cm \par{\bf Functional spaces.}
Given an admissible kernel $K:\Omega \times \Omega \to \RR$, we now introduce the infinite dimensional space $\Hcalt_K(\Omega)$ consisting of all finite linear combinations of the functions $K(x, \cdot)$ parametrized by $x \in \Omega$, that is, 
$
\Hcalt_K(\Omega)
:=  \Span \big\{K(\cdot, x) \, / \, x \in \Omega \big\}, 
$
which we endow with the scalar product and norm defined in the finite dimensional setup; see \eqref{equa-bilin-norm} (where now $Y$ and $N$ are no longer fixed). 
By construction, the  reproducing kernel property \eqref{eq:repkp} also holds in $\Hcalt_K(\Omega)$, i.e. 
\bse
\label{538}
\be
\la K(\cdot, x), K(\cdot, y) \ra_{\Hcalt_K(\Omega)} = K(x,y), \qquad x, y \in \Omega. 
\ee
From the Cauchy-Schwarz inequality, it follows that for any $\varphi \in \Hcalt_K(\Omega)$ and $x \in \Omega$ 
\bel{539} 
\aligned
|\varphi(x)| 
= | \la K(\cdot, x), \varphi \ra_{\Hcalt_K(\Omega)}|  
& \leq \| K(\cdot, x) \|_{\Hcalt_K(\Omega)} \, \|\varphi\|_{\Hcalt_K(\Omega)}
 = \sqrt{K(x,x)} \, \|\varphi\|_{\Hcalt_K(\Omega)}.
\endaligned
\ee   
\ese
Since the kernel is continuous and bounded, the ``point evaluation'' $\varphi \mapsto \varphi(x)$ is thus a linear and bounded functional on $\Hcalt_K(\Omega)$ (for any $x \in \Omega$). 

We have defined a pre-Hilbert space, that is, a vector space endowed with a scalar product and, in order to obtain a complete metric space, the completion of the pre-Hilbert space $\Hcalt_K(\Omega)$ is considered by taking all linear combinations based on countably many points $Y= (y^1, y^2, \ldots)$ in $\Omega$. The corresponding space is denoted by $\Hcal_K(\Omega)$ and is 
the {\it reproducing Hilbert space} generated from the kernel $K$. 

Clearly, both properties \eqref{538} remain true in $\Hcal_K(\Omega)$. Observe also that 
\be
\aligned
|\varphi(x) - \varphi(y)| 
& =  | \la K(\cdot, x) - K(\cdot, y), \varphi \ra_{\Hcal_K(\Omega)}|
\\
& \lesssim \| K(\cdot, x) - K(\cdot, y) \|_{\Hcal_K(\Omega)} \|\varphi\|_{\Hcal_K(\Omega)} 
 =  D(x,y) \|\varphi\|_{\Hcal_K(\Omega)}, 
\qquad x, y \in \Omega. 
\endaligned
\ee
Since $K$ and thus $D$ are continuous in $\Omega$, we have the embedding  
$\Hcal_K(\Omega) \subset C(\Omega)$, 
that is, all of the functions are continuous, at least.  


\vskip.05cm \par{\bf Mercer decomposition.}
We now consider the linear operator $T_K: L^2(\Omega) \to  L^2(\Omega)$ defined by
$
T_K (a):= \int_\Omega K(\cdot,x) a(x) \, dx$
(for $ a \in L^2(\Omega)$) 
on the Hilbert space $L^2(\Omega)$ endowed with its standard inner product.
We have 
$$
\| T_K (a) \|_{L^2(\Omega)} \leq |\Omega|^{1/2} \| K \|_{L^2(\Omega\times \Omega)} \| a \|_{L^2(\Omega)}, 
$$
so this operator is continuous and self-adjoint. 
It is easily checked to be compact: if a sequence $a^p \rightharpoonup a$ weakly in the $L^2$ norm then $T_K (a^p) \to T_K(a)$ strongly in the $L^2$ norm. The classical spectral decomposition applies and the operator $T_K$ admits an (at most countable) non-increasing sequence of eigenvalues $\lambda_i >0$ and a corresponding set of  eigenfunctions $\zeta_i$ such that
$T_K(\zeta_i) = \lambda_i \zeta_i$
and the family $\big\{\zeta_1, \zeta_2, \ldots \big\}$ forms an orthonormal basis of $L^2(\Omega)$ for the $L^2$ inner product.  Furthermore, since the kernel is continuous and bounded, the eigenfunctions $\zeta_i$ are continuous, at least. 


We then introduce the kernel 
\bel{eq;shdg}
L(x,y) = \sum_{j =1,2, \ldots}  \lambda_j \, \zeta_j (x)\zeta_j (y), \qquad x,y \in \Omega,  
\ee
in which the sum converges in the $L^2$ sense. This kernel is admissible since, for any collection of points $Y = (y^1, \ldots, y^N)$ in $\RN$, the bilinear form 
$L^Y(x, x') = \sum_{1 \leq n,m \leq N} L(y^n,y^m) x_n x'_m$ (with $x,x' \in \RN$), 
satisfies 
$$
L^Y(x,x) = \sum_{j = 1,2, \ldots}  \lambda_j \sum_{1 \leq n,m \leq N} 
\zeta_j (y^n)\zeta_j (y^m) x_n x_m 
= 
\sum_{j=1,2, \ldots}  \lambda_j \Big| \sum_{1 \leq n \leq N} \zeta_j (y^n) x_n \Big|^2 \geq 0. 
$$ 
In fact, one can check that $L$ coincides with the given kernel $K$ and \eqref{eq;shdg} represents its spectral decomposition. 
The family of functions $\big\{\lambda_i^{1/2} \zeta_i \big\}_{i \geq 1}$ is an orthonormal basis of the space $\Hcal_K(\Omega)$, as follows from the defining relation $T_K \zeta_i = \lambda_i \zeta_i$, namely\footnote{$\delta_{ij}$ being the Kronecker symbol} 
\be
<\lambda_i^{1/2}  \zeta_i, \lambda_j^{1/2} \zeta_j>_{\Hcal_K(\Omega)} 
= \lambda_i^{1/2} \lambda_j^{1/2}
\frac{1}{\lambda_i}<T_K \zeta_i,\zeta_j>_{\Hcal_K(\Omega)} 
= 
{\lambda_j^{1/2} \over \lambda_i^{1/2}} 
\la \zeta_i,\zeta_j \ra_{L^2(\Omega)} = \delta_{ij}. 
\ee
This series representation of the elements in $\Hcal_K(\Omega)$ is referred to as the {\it  Mercer representation} (which may be 
non-unique).  In short, Mercer theorem states that any admissible kernel $K: \Omega \times \Omega \to \RR$ 
can be viewed as a positive, self-adjoint and compact operator on $L^2(\Omega)$. 


\vskip.05cm \par{\bf Banach spaces.}
Based on the Mercer representation, we can define the $L^p$-based spaces at any order of differentiability. 
Namely, for any $s \geq 0$ and $p \in [1, +\infty)$ we consider the Banach space 
\bel{259}
\Hcal_K^{s,p}(\Omega)
:= 
\big\{
\varphi \in L^p (\Omega) \, / \, 
\sum_{i=1,2, \ldots} \lambda_i^{-ps/2} 
\big|
\la \varphi,\zeta_i \ra_{L^2(\Omega)}
\big|^p  
< +\infty \big\}
\ee
endowed with the norm 
$
\big(\| \varphi\|_{\Hcal_K^{s,p}(\Omega)} \big)^p 
:= \sum_{i=1,2, \ldots} \lambda_i^{-ps/2} 
\big|\la \varphi,\zeta_i \ra_{L^2(\Omega)} \big|^p.
$
When $p$ is chosen to be $2$, the space $\Hcal_K^s(\Omega):= \Hcal_K^{s,2}(\Omega)$ is a Hilbert space endowed with the inner product 
\be
\la f,g \ra_{\Hcal_K^s(\Omega)}
= \sum_{i=1,2, \ldots}  (\lambda_i )^{-s} \la f,\zeta_i \ra_{L^2(\Omega)} \la g,\zeta_i \ra_{L^2(\Omega)}. 
\ee
In particular, $\Hcal_K(\Omega) = \Hcal_K^1(\Omega) = \Hcal_K^{1,2}(\Omega)$ and we recover the Hilbert space defined earlier.


\section{Methodology for defining classes of kernels}
\label{section--3} 

\subsection{Translation-invariant and radially-symmetric kernels on $\RD$}  

\vskip.05cm \par{\bf Translation invariant kernels.}
Our first task now is to provide some preliminary material and a classification involving the notions of translation-invariant kernel, periodic kernel, tensorial kernel, and radially-symmetric kernel. In the present section we allow ourselves to 
introduce kernels defined on the whole of $\RD$, although Section~2 was restricted to kernels defined on a bounded set; namely, we will use kernel defined on $\RD$ only for defining the kernels of interest defined on a bounded set. 

We begin with the class of the form $K(x,y) = \chi(x-y)$ (the examples in Table \ref{FDK} being of this type), which we refer to as 
\bse
{\it translation-invariant kernels}  
\bel{equa-tranbslK} 
\aligned
& K(x,y) = \chi(x-y), \quad x, y \in \RD.   
\endaligned
\ee
This family is parametrized by a {\it generating function} $\chi: \RD \to \RR$, 
which (after normalization) must satisfy (as we check below)
\bel{eq:shdh6}
\aligned
& \chi(0) = 1, \qquad  \chi(-x) = \chi(x),  &&& x \in \RD,
\\
& \chih(\xi) \geq 0 \quad &&& \xi \in \RD. 
\endaligned
\ee 
\ese

%

 
\vskip.05cm \par{\bf Positivity property.}
Under the assumption \eqref{eq:shdh6}, let us consider a collection $Y = (y^1, \ldots, y^N)$ in $\RD$ and the associated bilinear form 
$K(Y,Y)(\xi, \xi) = \sum_{1 \leq n,m \leq N} K(y^n,y^m) \xi_n  \xi_m$
for 
$\xi \in \RN$. 
We obtain the positivity property  
$$
\sum_{1 \leq n,m \leq N} 
\big\la \chih, e^{-i<y^n-y^m,\cdot>} \big\ra_{\Dcal',\Dcal} \, \xi_n  \xi_m 
=  
\big\la \chih, \big| \sum_{n} \xi_n e^{-i<y^n,\cdot>} \big|^2 \big\ra_{\Dcal',\Dcal} \geq 0,
$$
in which $\la \cdot,\cdot\ra_{\Dcal',\Dcal}$ is the duality braket for distributions. 
We thus find 
$$
\aligned
& \iint_{\RD \times \RD} K(x,y) \varphi(x) \varphi(y) \, dxdy
  = \iint_{\RD \times \RD} \chi(x-y) \varphi(x) \varphi(y) \, dxdy 
\\
& = \int_{\RD} (\chi \star \varphi)(x)\varphi(x) \, dx
= \int_{\RD} \widehat{(\chi \star \varphi)}(\xi) \varphih(\xi) \, d\xi
=
\int_{\RD} |\varphih|^2 \chih \, d\xi \ge0, 
\endaligned
$$
or equivalently  
$ \iint_{\RD \times \RD} K(x,y) \varphi(x) \varphi(y) \, dxdy \geq 0$.


\vskip.05cm \par{\bf Radially-symmetric kernels.}
Among translation-invariant kernels, the class of {\it radially-symmetric kernels} is an important subclass and corresponds to the case where the function $\chi$ that only depends on the modulus of its argument only, that is, 
\bel{equa-tranbslK-r} 
K(x,y) = \chi(|x-y|), \quad x, y \in \RD, 
\ee
where the generating function $\chi: \RR \to \RR$ is now regarded as a function of a real variable. 
%


\subsection{Localization via scaling: the transported kernels}

The  translation-invariant fail to be sufficiently localized (say in the sense that $K$ fails to be 
$L^1(\RD \times \RD)$.), and we now explain how a transportation map can be applied in order to ``localize'' a kernel to a bounded set. 

\bse

\begin{proposition} \label{TRK}
Let $\Kseed: \RD \times \RD \to \RR$ be a bounded admissible kernel and $\mu$ be a sufficiently regular, probability measure such that $\supp (\mu)$ is a convex set (with, therefore, $\mu \ge 0$ and $\mu(\RD)=1$). Then the kernel
\bel{TLOC}
K^\loc(x,x'):= \Kseed(x,x') \mu(x)\mu(x'), \qquad x, x' \in \RD, 
\ee
is admissible and belongs to $L^1(\RD \times \RD)$. Moreover, let $\Omega \subset \RD$ be any open and convex subset with normalized volume $|\Omega| = 1$, and consider a transportation map for the measure $\mu$, that is, a one-to-one map $S: \Omega \to \supp(\mu)$ such that $S=\nabla h$ for some convex function $h: \Omega \to \RR$ and $S_\# \mu = dy$ (the Lebesgue measure). Then, 
\bel{TK}
K^\trans(y,y'):= \Kseed(S(y),S(y')), \qquad y, y' \in \Omega
\ee
defines an admissible kernel, referred to as the {\bf transported kernel} associated with $\Kseed$ and $\mu$.  
Furthermore, for any $\varphi \in L_\mu^1(\RD)$ (the set of functions that are integrable for the measure $\mu$) and any choice of points $x^n = S(y^n)$ one has 
\bel{eq:20D-more} 
\int_\RD  \varphi(x) \, d\mu(x) - {1 \over N}  \sum_{1 \leq n \leq N}  \varphi(x^n) 
= 
\int_\Omega  (\varphi \circ S)(y) \, dy - {1 \over N}  \sum_{1 \leq n \leq N}  (\varphi \circ S)(y^n). 
\ee
\end{proposition}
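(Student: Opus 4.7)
The plan is to address the three claims in the proposition in turn, each reducing to a routine verification of the admissibility axioms of Section \ref{section-21} combined with either Fubini or a pushforward change of variables.

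First I would treat the localized kernel $K^\loc$. Symmetry is inherited directly from $\Kseed$ and the symmetric factorization $\mu(x)\mu(x')$. For the positivity property, given distinct points $Y = (y^1, \ldots, y^N) \subset \supp(\mu)$ and a vector $a \in \RR^N$, I would set $b_n := a_n \mu(y^n)$ and observe
\[
a^T K^\loc(Y,Y) a = \sum_{n,m} \Kseed(y^n, y^m) \mu(y^n) \mu(y^m) a_n a_m = b^T \Kseed(Y,Y) b \geq 0,
\]
which is strictly positive whenever $a \neq 0$, since the regularity assumption on $\mu$ ensures $\mu > 0$ on $\supp(\mu)$ and since $\Kseed$ is admissible. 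The $L^1$ bound then follows from Fubini and boundedness:
\[
\iint_{\RD \times \RD} |K^\loc(x,x')| \, dx\,dx' \leq \|\Kseed\|_{L^\infty} \Big(\int_\RD \mu(x)\,dx\Big)^2 = \|\Kseed\|_{L^\infty} < \infty.
\]

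Next I would turn to the transported kernel $K^\trans$. Symmetry is immediate from symmetry of $\Kseed$. For positivity, the key observation is that $S: \Omega \to \supp(\mu)$ is a one-to-one map (being the gradient of a strictly convex function, in the sense of Brenier). Hence, for any distinct $y^1, \ldots, y^N \in \Omega$, the image points $x^n := S(y^n)$ are distinct elements of $\supp(\mu) \subset \RD$, and
\[
a^T K^\trans(Y,Y) a = \sum_{n,m} \Kseed(x^n, x^m) a_n a_m > 0 \qquad (a \neq 0)
\]
by admissibility of $\Kseed$. Thus $K^\trans$ is admissible on $\Omega$.

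Finally, for the integration identity \eqref{eq:20D-more}, the defining transportation property $S_\#(dy) = \mu$ (with $S = \nabla h$ a Brenier map between the normalized Lebesgue measure on $\Omega$ and $\mu$) yields the pushforward change of variables
\[
\int_\RD \varphi(x)\,d\mu(x) = \int_\Omega (\varphi \circ S)(y)\,dy
\]
for every $\varphi \in L^1_\mu(\RD)$, while the discrete side reads $\varphi(x^n) = \varphi(S(y^n)) = (\varphi \circ S)(y^n)$ by the very choice $x^n = S(y^n)$. Subtracting the two corresponding expressions gives \eqref{eq:20D-more}. The main (and essentially only) obstacle I anticipate is the notational one of reconciling the direction of the pushforward convention used in the statement; once the Brenier map and its transportation property are unambiguously fixed, all three points reduce to bookkeeping.
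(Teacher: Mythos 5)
Your proposal is correct and, on two of the three claims, takes a route that is genuinely different from (and more closely tied to the paper's own definition of admissibility than) the published proof. The paper argues entirely with continuous quadratic forms: it observes that $\iint \varphi(x)\varphi(x')\,d\mu(x)\,d\mu(x') = \big(\int \varphi \, d\mu\big)^2 \geq 0$, invokes the closure of admissible kernels under products to handle $K^\loc$, and then checks $K^\trans$ by the change of variables $x = S(y)$ inside the integral $\iint K^\trans(y,y')\psi(y)\psi(y')\,dy\,dy' \geq 0$. You instead work directly with the finite matrices $K^\loc(Y,Y)$ and $K^\trans(Y,Y)$: the substitution $b_n = a_n\mu(y^n)$ reduces the first to $b^T \Kseed(Y,Y) b$, and the injectivity of the Brenier map $S$ sends distinct points $y^n$ to distinct points $x^n = S(y^n)$, so that $K^\trans(Y,Y) = \Kseed(S(Y),S(Y))$ is positive definite by the admissibility of $\Kseed$. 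This buys you something real: the paper's definition of admissibility in Section~\ref{section-21} is stated in terms of positive definiteness of these matrices, and your argument for $K^\trans$ delivers exactly that (strict positivity for $a \neq 0$), whereas the paper's integral inequality only yields non-negativity of a smeared quadratic form. You also spell out the $L^1$ bound and the identity \eqref{eq:20D-more} (including the correct orientation of the pushforward, $S_\#(dy) = \mu$, which the statement writes backwards), both of which the paper's proof omits entirely.

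One soft spot, which you inherit from the proposition rather than introduce: strict positive definiteness of $K^\loc(Y,Y)$ on all of $\RD$ genuinely fails if some $y^n$ lies outside $\supp(\mu)$ (or at a point where the density vanishes), since the corresponding row and column of the matrix are then zero. Your restriction of the points to $\supp(\mu)$, together with the reading that ``sufficiently regular'' forces $\mu > 0$ there, is a reasonable repair, but it is worth noting explicitly that $K^\loc$ is only positive semi-definite as a kernel on $\RD \times \RD$; the paper's own proof establishes no more than this either.
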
 

\ese

In the statement above, the transportation maps satisfies $\int_\RD \varphi d\mu = \int_\Omega (\varphi \circ S) dy$ for any continuous function $\varphi \in L_\mu^1(\RD)$. Provided $S$ is sufficiently regular, one has $S_\# \mu = |\det \nabla S| \circ S^{-1} dy$, where $|\det \nabla S|$ is the Jacobian of $S$. The convexity of $\supp (\mu)$ ensures that $S$ is one-to-one and continuous from $\Omega$ onto the support set $\supp(\mu)$. 
Furthermore, thanks to the localization argument above, deriving an error estimate associated with the left-hand side of \eqref{eq:20D-more} reduces to \eqref{eq:20D}, and the role of the measure $\mu$ is eliminated. 
Within the framework of kernel spaces, the relation between the $\mu$-weighted norm on $\RD$ and the un-weighted norm on the bounded set $\Omega$ is as follows (with obvious notation): 
$$
\|\varphi \|_{\Hcal_{\Kseed, \mu}^{s,p}(\RD)}
= 
\|\varphi \|_{\Hcal_{K^{\loc, s,p}}(\RD)}
= \|\varphi \circ S \|_{\Hcal_{K^{\trans,s,p}}(\Omega)}.
$$

\begin{proof} For any function $\varphi \in L_\mu^1(\RD)$ we have 
$\iint_{\RD \times \RD} \varphi(x) \varphi(y) d\mu(x) d\mu(y) = \Big( \int_{\RD } \varphi d\mu \Big)^2$, which is positive.  Hence, being the product of two admissible kernels, we see that the kernel $K^\loc$ is admissible. The transported kernel $K^\trans$ is also admissible since for all relevant $\psi$ 
$$
\iint_{\Omega \times \Omega} K^\trans(y,y') \psi(y)\psi(y') dy dy'
= \iint_{\RD \times \RD} K^\loc(x,x') (\psi \circ S^{-1}) (x) (\psi \circ S^{-1}) (x') \, 
d\mu(x) d\mu(x') \geq 0.
$$ 
\end{proof}


\subsection{Localization via periodization: the periodic kernels}
 \label{loc-peri}

 \par{\bf A discrete lattice.}
Motivated by Cohn and Elkies's work \cite{CohnElkies} on the sphere packing problem, we propose to embed the support of a general kernel in a periodic lattice. By periodicity, we can always extend to $\RD$ the kernel defined to an elementary cell. Importantly, the terms arising in the spectral decomposition can be controlled almost explicitly, thanks to a Poisson decomposition formula associated with the lattice. 

A family of $D$ vectors $l_1, l_2, \ldots, l_D \in \RD$ being given, we consider their convex hull $\Cbf \subset \RD$ which serves as the fundamental cell of our discrete lattice and whose volume is denoted by $| \Cbf |$. 
By suitably translating $\Cbf$, we thus generate the periodic lattice 
$
\Lbf:= \Big \{\sum_{1 \leq d \leq D} \alpha_d  l_d \, \big/ \,  \alpha = (\alpha_1,\ldots,\alpha_D) \in \ZZ^D \Big \}, 
$
and we denote its dual by 
$
\Lbf^*:= \Big\{\alpha^* \in \RD \, \big/ \,  <\alpha, \alpha^*> \in \ZZ \, \text{ for all } \alpha \in \Lbf \Big \}. 
$
We denote by $l_1^*, \ldots, l_D^*$ the vectors generating the elementary cell $\Cbf^* \subset \RD$ of the dual lattice. 
Next, we introduce the discrete Hilbert space $l^2(\Lbf^*)$ consisting of all functions defined on the dual lattice, endowed with the inner product 
\be
\la  f, g \ra_{l^2(\Lbf^*)} 
:= {1 \over |\Cbf|} \sum_{\alpha^* \in \Lbf^*} f(\alpha^*) g(\alpha^*), 
\qquad f, g: \Lbf^* \to \RR. 
\ee


\vskip.05cm \par{\bf Poisson formula.}
Consider the Fourier transform of a real-valued function $\varphi: \RD \to \RR$ defined on $\RD$ and let us restrict it to the dual lattice, that is, consider the discrete values 
\be
\varphih(\alpha^*):= \int_\RD \varphi(x)\, e^{-2i \pi <x,\alpha^*>} \, dx.  
\ee
Then, the so-called Poisson formula reads 
\bel{PF}
\sum_{\alpha \in \Lbf} \varphi(\alpha+x) 
= {1 \over | \Cbf |} \sum_{\alpha^* \in \Lbf^*}  e^{2i \pi <x,\alpha^*>} \varphih(\alpha^*), 
\qquad x \in \RD.
\ee
{\sl Provided} the function $\varphi$ is supported on the cell $\Cbf$, the sum in the left-hand side contains a {\sl single term} and, therefore, 
\bse
\label{PF-deux}
\be
\varphi(x) = {1 \over | \Cbf |} \sum_{\alpha^* \in \Lbf^*}  e^{2i \pi <x, \alpha^*>} \varphih(\alpha^*), 
\qquad 
x \in \Cbf, 
\, 
\text{ provided } \supp (\varphi)\subset \Cbf 
\ee
or, with our notation, 
\bel{PFL}
\varphi(x) = \big\la  e^{2i \pi  <x, \cdot>}, \varphih  \big\ra_{l^2(\Lbf^*)}, 
\qquad
x \in \Cbf,
\, 
\text{ provided } \supp (\varphi) \subset \Cbf. 
\ee  
\ese
Hence, a function defined on the cell $\Cbf$ can be recovered  (via an discrete inverse Fourier transform) from the values of its Fourier transform on the dual lattice $\Lbf^*$. 


More generally, let us derive an identity that will be useful to us later on. 
Consider now a collection $(y^1, \ldots, y^N)$ of points in $\RD$ and, in view of \eqref{PF-deux}, let us write 
$$
\aligned
\sum_{1 \leq n,m \leq N} \sum_{\alpha \in \Lbf} \varphi(\alpha + y^n - y^m) 
& =  {1 \over | \Cbf |} \hskip-.15cm \sum_{1 \leq n,m \leq N} 
\sum_{\alpha^* \in \Lbf^*}  
\hskip-.15cm
e^{2i \pi < y^n - y^m,\alpha^*>} \varphih(\alpha^*)
  =  
{1 \over | \Cbf |} \hskip-.15cm \sum_{\alpha^* \in \Lbf^*}  
 \Big| \sum_{1 \leq n \leq N} \hskip-.15cm
e^{2i \pi <y^n,\alpha^*>} \Big|^2 \varphih(\alpha^*). 
\endaligned
$$
We can arrange that, in the left-hand side, the sum over $\alpha \in \Lbf$  reduces to a single term obtain when $\alpha = 0$.
We reach the following conclusion. 

\begin{lemma} 
\label{lem-observ}
For any function $\varphi$ supported on the elementary cell $\Cbf$ of a lattice $\Lbf$, that is, 
$\supp (\varphi)\subset \Cbf$ and for any finite collection $(y^1, \ldots, y^N)$ satisfying 
the ``localization property''  
$y^n - y^m \in \Cbf$ for all $n,m= 1, 2, \ldots, N$, 
the following identity holds: 
\bel{610-single}
\aligned
& \sum_{1 \leq n,m \leq N} \varphi(y^n - y^m) 
 = {1 \over | \Cbf |} \sum_{\alpha^* \in \Lbf^*}  
\Big| \sum_{1 \leq n \leq N} e^{2i \pi <y^n, \alpha^*>} \Big|^2 \varphih(\alpha^*).
\endaligned
\ee 
\end{lemma}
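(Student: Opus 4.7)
The plan is to apply the Poisson summation formula \eqref{PF} with $x$ replaced by each difference $y^n-y^m$, sum over the pairs of indices, and then exploit the localization hypothesis to collapse the lattice sum on the left-hand side to a single term.

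First I would substitute $x = y^n - y^m$ in \eqref{PF} to obtain
\[
\sum_{\alpha \in \Lbf} \varphi(\alpha + y^n - y^m) \;=\; \frac{1}{|\Cbf|} \sum_{\alpha^* \in \Lbf^*} e^{2i\pi \la y^n - y^m,\alpha^*\ra}\,\varphih(\alpha^*),
\]
and then sum over $n,m = 1,\ldots,N$. On the right-hand side the resulting double exponential sum factors as
\[
\sum_{1\le n,m\le N} e^{2i\pi \la y^n - y^m,\alpha^*\ra} \;=\; \Big|\sum_{1\le n\le N} e^{2i\pi \la y^n,\alpha^*\ra}\Big|^2,
\]
producing exactly the right-hand side of \eqref{610-single} weighted by $\varphih(\alpha^*)/|\Cbf|$.

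Next I would reduce the left-hand side using the localization assumption. For each fixed pair $(n,m)$, the condition $y^n - y^m \in \Cbf$ together with $\supp(\varphi)\subset \Cbf$ forces $\alpha + y^n - y^m$ to lie in $\supp(\varphi)$ only when this point belongs to $\Cbf$. Since the translates $\{\alpha + \Cbf : \alpha\in\Lbf\}$ tile $\RD$ with pairwise disjoint interiors, this can happen only for $\alpha = 0$. Hence each inner lattice sum collapses to the single term $\varphi(y^n - y^m)$, and summing over $(n,m)$ reproduces the left-hand side of \eqref{610-single}.

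The delicate point that I expect to be the main subtlety is the behaviour on $\partial \Cbf$: if some difference $y^n - y^m$ happens to sit on the boundary of $\Cbf$, then a nonzero lattice vector $\alpha$ can translate it onto an opposite face of $\Cbf$ which also lies in the closure of $\supp(\varphi)$, creating a duplicate contribution. This is handled either by interpreting $\Cbf$ as a half-open fundamental domain, so that the tiling is a genuine disjoint union, or by observing that such coincidences occur on a set of measure zero and are therefore harmless for the continuous kernels considered here. Either convention yields the identity as stated.
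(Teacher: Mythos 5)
Your proposal is correct and follows essentially the same route as the paper: apply the Poisson formula \eqref{PF} at $x=y^n-y^m$, sum over the pairs to factor the exponential sum into $\bigl|\sum_n e^{2i\pi\la y^n,\alpha^*\ra}\bigr|^2$, and use the localization hypothesis to collapse the lattice sum on the left to the single term $\alpha=0$. Your extra remark on the boundary of $\Cbf$ addresses a point the paper glosses over with ``we can arrange that,'' but it does not change the argument.
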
 


\vskip.05cm \par{\bf Periodic kernels.}   
The interest of the following class of kernels lies in  the fact that their spectral decomposition can be determined almost explicitly, in terms of exponential functions defined on the dual lattice. Namely, Lemma~\ref{lem-observ} allows us to pass from the continuous physical variables on $\Cbf$ to the discrete Fourier variables on $\Lbf^*$. The role of the function $\rho$ introduced below 
is going to be played by (the restriction to the lattice of) the Fouier transform of an arbitrary kernel. 
Indeed, our definition below provides a way to transform a ``seed'' kernel $K^\seed$ defined on $\RD$ to a periodic kernel $\Kper$ defined on the lattice cell $\Cbf$. 

\bse

\begin{proposition}[Periodic kernels associated with a generating function]
\label{PKFT} 
Consider a discrete lattice $\Lbf \subset \RD$ generated from an elementary cell $\Cbf$, and let $\rho: \Lbf^* \to (0, +\infty)$ be a positive, integrable, and even function, that is,  
\be 
\rho^{1/2} \in \ell^2(\Lbf^*), \qquad 
\rho(-\alpha_*) = \rho(\alpha_*) \geq 0 \,
 \quad (\text{ with } \alpha_* \in \Lbf^*). 
\ee
Then, the discrete Fourier transform of $\rho$ extended to the whole of $\RD$, that is, 
\bel{261}
\Kper(x,y) := \big\la  e^{2i \pi <x-y, \cdot>}, \rho \big\ra_{\ell^2(\Lbf^*)}, 
\qquad x,y \in \RD
\ee
defines an admissible kernel on $\RD$ which is periodic with period $\Cbf$ and its associated Hilbert space is 
\be 
\Hcal_{\Kper}(\Cbf) 
= \Big\{\varphi \in C(\RD) \, \big/ \, \Cbf\text{--periodic}  \, / \, \varphih \, \rho^{-1/2} \in \ell^2(\Lbf^*) \Big\} 
\ee
endowed with the norm 
$\la f, g \ra_{\Hcal_{\Kper}(\Cbf)} = \la \widehat f \rho^{-1/2}, \widehat g \rho^{-1/2} \ra_{\ell^2(\Lbf^*)}$. 
\end{proposition}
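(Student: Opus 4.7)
The plan is to check four items in sequence: (a) the series defining $\Kper$ converges and yields a real, continuous, $\Cbf$--periodic, symmetric function; (b) $\Kper$ satisfies the positivity property of Section~\ref{section-21}; (c) the integral operator $T_{\Kper}$ on $L^2(\Cbf)$ is explicitly diagonalized by the characters of the torus $\RD/\Lbf$; (d) inserting this spectral data into the Mercer construction \eqref{259} recovers the stated description of $\Hcal_{\Kper}(\Cbf)$.

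For (a), the hypothesis $\rho^{1/2}\in \ell^2(\Lbf^*)$ is equivalent to $\rho\in \ell^1(\Lbf^*)$, so the series
\[
\Kper(x,y) = \frac{1}{|\Cbf|}\sum_{\alpha^* \in \Lbf^*} \rho(\alpha^*)\, e^{2i\pi <x-y,\alpha^*>}
\]
converges absolutely and uniformly, defining a bounded continuous function. Periodicity in each variable follows from $<l,\alpha^*> \in \ZZ$ whenever $l \in \Lbf$ and $\alpha^* \in \Lbf^*$. Pairing $\alpha^*$ with $-\alpha^*$ and using $\rho(-\alpha^*)=\rho(\alpha^*)$ shows that the sum coincides with its real part, and the same pairing gives $\Kper(x,y)=\Kper(y,x)$. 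For (b), given distinct points $y^1,\ldots,y^N$ in $\Cbf$ and $\xi \in \RN$, substituting and interchanging sums yields
\[
\xi^T \Kper(Y,Y)\,\xi \;=\; \frac{1}{|\Cbf|}\sum_{\alpha^* \in \Lbf^*} \rho(\alpha^*)\,\Bigl|\,\sum_{1\le n\le N} \xi_n\, e^{2i\pi <y^n,\alpha^*>}\,\Bigr|^{\,2} \;\geq\; 0.
\]
Strict positivity follows from a standard uniqueness statement: if this sum vanishes, then every Fourier coefficient of the finitely supported measure $\sum_n \xi_n \delta_{y^n}$ on the torus $\RD/\Lbf$ is zero, forcing $\xi = 0$ since the points are distinct modulo $\Lbf$.

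For (c)--(d), I verify that the characters $e_{\alpha^*}(x) := e^{2i\pi <x,\alpha^*>}$ diagonalize the operator $T_{\Kper}(a)(x) := \int_{\Cbf} \Kper(x,y)\,a(y)\,dy$. Interchanging sum and integral and using the orthogonality relation $\int_{\Cbf} e^{2i\pi <y,\beta^*-\alpha^*>}\,dy = |\Cbf|\,\delta_{\alpha^*,\beta^*}$ gives $T_{\Kper}(e_{\beta^*}) = \rho(\beta^*)\, e_{\beta^*}$. Therefore the family $\{|\Cbf|^{-1/2} e_{\alpha^*}\}_{\alpha^* \in \Lbf^*}$ is an $L^2(\Cbf)$--orthonormal eigenbasis of $T_{\Kper}$ with eigenvalues $\rho(\alpha^*) > 0$, and one recognizes the pairings $\la \varphi, |\Cbf|^{-1/2} e_{\alpha^*}\ra_{L^2(\Cbf)}$ as $|\Cbf|^{-1/2}\,\hvarphi(\alpha^*)$. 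Feeding these into the general Mercer-based norm \eqref{259} with $s = 1$, $p = 2$ yields
\[
\|\varphi\|^2_{\Hcal_{\Kper}(\Cbf)} \;=\; \frac{1}{|\Cbf|}\sum_{\alpha^* \in \Lbf^*} \rho(\alpha^*)^{-1} \,|\hvarphi(\alpha^*)|^{\,2} \;=\; \bigl\la \hvarphi\,\rho^{-1/2},\,\hvarphi\,\rho^{-1/2} \bigr\ra_{\ell^2(\Lbf^*)},
\]
which is exactly the stated norm. The associated Hilbert space is then the completion of finite Fourier polynomials in this norm, i.e.\ precisely the $\Cbf$--periodic continuous functions for which $\hvarphi\,\rho^{-1/2} \in \ell^2(\Lbf^*)$.

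I expect the only real obstacle to be bookkeeping: aligning the normalization factor $|\Cbf|^{-1}$ in the $\ell^2(\Lbf^*)$ inner product, the $|\Cbf|^{-1/2}$ in the orthonormal basis, and the Fourier-coefficient convention for a periodic function (since the paper's Fourier transform is written with an integral over $\RD$ and must here be interpreted as an integral over the fundamental cell). The strict-positivity step is classical but should be written out, as it is the one place where a nontrivial uniqueness property for Fourier series of discrete measures on the torus is actually used.
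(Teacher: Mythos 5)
Your argument is correct, and for the part the paper actually proves it follows the same route: the paper's proof consists only of (i) noting that $\rho^{1/2}\in\ell^2(\Lbf^*)$, i.e.\ $\rho\in\ell^1(\Lbf^*)$, makes the series finite and bounded, (ii) symmetry from evenness of $\rho$, and (iii) the identity
$\sum_{n,m}a_na_m\Kper(y^n,y^m)=\frac{1}{|\Cbf|}\sum_{\alpha^*}\rho(\alpha^*)\big|\sum_n a_n e^{2i\pi<y^n,\alpha^*>}\big|^2\geq 0$,
which is exactly your step (b) (the paper's displayed version has a slip placing $\rho(\alpha_*)$ inside the squared modulus; your placement is the right one). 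Where you genuinely go beyond the paper is in two respects, both of which are needed for the statement as written. First, the paper's notion of admissibility requires the matrices $K(Y,Y)$ to be positive \emph{definite}, and its proof only delivers semi-definiteness; your appeal to the injectivity of the Fourier transform of the finitely supported measure $\sum_n\xi_n\delta_{y^n}$ on the torus $\RD/\Lbf$ supplies the missing strict-positivity step. Second, the paper offers no argument at all for the identification of $\Hcal_{\Kper}(\Cbf)$ and its norm; your diagonalization of $T_{\Kper}$ by the characters $e_{\alpha^*}$, with eigenvalues $\rho(\alpha^*)$, followed by substitution into the Mercer-based definition \eqref{259} with $s=1$, $p=2$, is the natural way to obtain it and the normalization bookkeeping you flag does work out against the $|\Cbf|^{-1}$ in the $\ell^2(\Lbf^*)$ pairing. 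The only point worth writing carefully is that the complex characters come in conjugate pairs, so to invoke the (real) spectral/Mercer framework of Section~\ref{section--2} one should either pass to the real eigenbasis of sines and cosines within each eigenspace $\mathrm{span}\{e_{\alpha^*},e_{-\alpha^*}\}$ (possible since $\rho(\alpha^*)=\rho(-\alpha^*)$) or note explicitly that the resulting norm is unchanged; this is cosmetic and does not affect the validity of your argument.
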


\ese

More generally, provided $\rho^s \in \ell^{p}(\Lbf^*)$ for some $p \in [1, +\infty)$ and $s \geq 0$, we can also introduce the  Banach space  
\bel{58}
\Hcal_{\Kper}^{s,p}(\Cbf) 
= \Big\{\varphi \in C(\RD) \, \big/ \, \Cbf\text{--periodic}    \, \big/ \, \varphih \rho^{-s/p} \in \ell^{p'}(\Lbf^*)
\Big\}. 
\ee

\begin{proof} Since $\rho^{1/2} \in \ell^2(\Lbf^*)$ it is clear that the expression
$$
\Kper(x,y) =  {1 \over | \Cbf |} \sum_{\alpha^* \in \Lbf^*}  e^{2i \pi <x-y, \alpha^*>} \rho(\alpha^*)
$$
is finite and, in fact, globally bounded on $\RD$. It is symmetric since $\chi$ is even. The positivity condition follows from  
$$
\aligned
\sum_{n, m} a_n a_m \Kper(y^n,y^m) 
& =  
\sum_{n, m} a_n a_m  \big\la  e^{2i \pi <y^n - y^m, \cdot>}, \rho \big\ra_{\ell^2(\Lbf^*)}
 =  \sum_{\alpha_* \in \Lbf^*} \Big| \sum_n 
e^{2i \pi <y^n, \alpha_*>} a_n \rho(\alpha_*) \Big|^2  \geq 0.
\endaligned 
\qedhere
$$
\end{proof}


\subsection{Further generating techniques}

\vskip.05cm \par{\bf The tensor technique.}
\bse
A broad class of examples on $\RD$ can be obtained by tensor decomposition from an admissible kernel in one dimension, say $\Kseed: \RR \times \RR \to \RR$, namely 
\be
K(x,y) = \prod_{1 \leq d \leq D} \Kseed(x_d, y_d), \qquad x, y \in \RD. 
\ee 
This applies, particularly, to a translation-invariant kernel, in which case we choose any even function $\chi^\seed: \RR \to \RR$ and set 
\be
K(x,y) = \prod_{1 \leq d \leq D} \chi^\seed(x_d - y_d), \qquad x, y \in \RD. 
\ee
\ese


\vskip.05cm \par{\bf An example: the tensorial truncated kernel.} 
\bse
The kernel $K_T(x,y) = \big( 1 - |x-y| \big)^+$ 
has the form above and 
can also be expressed as a convolution, namely 
\be
K_T(x,y) = \chi(x-y) = (\lambda \ast \lambda)(x-y), 
\qquad \lambda(x) = 1_{[-1/2,1/2]}(x), \qquad x \in \RR. 
\ee
Here, we have $\chih(\xi) = \hlambda^2(\xi)$ and $\hlambda(\xi)= \frac{\sin(2\pi\xi)}{2\pi\xi}$. 
More generally, in dimension $D$ we consider 
\bel{eq=notre noyauD}
K(x,y) =  \prod_{1 \leq d \leq D}  \Kseed(x_d,y_d) = \prod_{1 \leq d \leq D} \big( 1 - |x_d-y_d| \big)^+, 
\ee
written also as a convolution 
$K(x,y) = (\lambda \ast \lambda)(x-y)$ with 
$\lambda(x) = 1_{[0,1]^D}(x)$. 
Again, we have $K(x,y) = \chi(x-y)$ with $\chih(\xi) = \hlambda^2(\xi)$, and now 
$\hlambda(\xi) = \prod_{1 \leq d \leq D}  \frac{\sin(2\pi\xi_d)}{2\pi\xi_d}$. 
\ese 


\vskip.05cm \par{\bf The normalization technique.}
The transformations below can also  serve as building blocks in order to adapt existing examples to a particular application. If $K$ is an admissible kernel on $\RD$ we can consider
\bel{equa-Ktt}
\Knorm(x,y) = {K(x,y) \over K(x,x)^{1/2}K(y,y)^{1/2}}, \qquad x, y \in \RD. 
\ee
Clearly, we have $\Knorm(x,x) = 1$ and the required admissibility conditions are easily checked. 
The coefficients $a_n$ and $\anorm_n$ of the corresponding decomposition \eqref{equa:HKY} of a function 
in $\Kcal_K(\RD)$ and $\Kcal_{\Knorm}(\RD)$, respectively, are related by 
$ \anorm_n = a_n K(y^n, y^n)^{1/2}$, so that the two spaces are quite similar from the application standpoint. 
We thus regard \eqref{equa-Ktt} as a normalization procedure. 


\vskip.05cm \par{\bf Taking sums and products.}
If $K_1, K_2$ are admissible kernels in $\RD$, then it is also easily checked that, $a,b > 0$ being some given constants, 
\be
K_3(x,y) = a K_1(x,y) +b K_2(x,y), 
\qquad 
K_4(x,y) = K_1(x,y) K_2(x,y), \qquad 
\ee
With the notation used in Section \ref{section--2} the matrices $K_3(Y,Y)$ and $K_4(Y,Y)$ are symmetric positive definite, as follows easily from a standard linear algebra argument.


\vskip.05cm \par{\bf Zonal kernels.}
If a function $\phi$ is such that $(x,x') \mapsto \phi(xx')$ is a one-dimensional kernel, then the following formula 
\be
K_{\text{zonal}}(x,y) = \phi(<x,y>), \qquad x, y \in \RD 
\ee
(where $<x,y>$ stands for the Euclidian inner product) 
defines an admissible kernel. This class is often used by the artificial intelligence community.
  

\vskip.05cm \par{\bf Convolution kernels.}  
\bse
Another class of translation-invariant kernels can be generated by choosing a function 
\be
\lambda \in \L^2(\RD),
\quad  
\int_\RD \lambda(x) \lambda(-x) \, dx \neq 0, 
\quad
\int_\RD (\hlambda)^2\, d\xi = 1 
\ee
and then defining our generating function $\chi$ by the convolution formula   
\be
K(x,y):= \chi(x-y) = (\lambda \star \lambda)(x-y),
 \qquad (\lambda \star \lambda)(y) = {\int_{\RD} \lambda(x) \lambda(y-x) \, dx \over  \int_\RD \lambda(x) \lambda(-x) \, dx}, 
\qquad y \in \RD. 
\ee 
Indeed, it is clear that $\chi(0) = 1$ and $\chih = (\hlambda)^2 \geq 0$, so that the Fourier transform of $\chi$ is a probability measure. This class of kernels is used in machine learning, for instance in combination of multi-layer neural networks.
\ese


\section{Designing kernels on a bounded domain}
\label{sec-select}

\subsection{Standard kernels taken as seed data}
\label{section-43}

We focus our attention to the standard examples listed in Table \ref{FDK}, that are radially-symmetric, translation-invariant kernels $K(x,y) = \chi(x-y)$. 
 
\vskip.05cm \par{\bf Exponential kernel $K_E$.} 
The choice $\chih_E(\xi) = (1 + |\xi|^2)^{-m}$ (with $m > D/2$) leads to the standard Sobolev space $\Hcal_{K_E}(\RD) = W^{m,2}(\RD)$, and is a standard choice in the numerical analysis literature. 


\vskip.05cm \par{\bf Multiquadric kernel $K_M$.} 
The choice $\chih_M(\xi) = e^{-|\xi|}$ is only Lipschitz continuous at the origin and is relevant for representing sufficiently smooth functions with polynomial decay, hence provides (slightly) more information than the Gaussian one (below).


\vskip.05cm \par{\bf Gaussian kernel $K_G$.} 
The choice $\chih_G(\xi) = e^{-|\xi|^2}$ provides an exponential decay in both the Fourier and the physical spaces. Both functions $\chih_G \geq 0$ and $\chi_G \geq 0$ are globally positive. The Gaussian kernel is adapted to the description of smooth and fast decaying functions which have``almost'' compact support in physical and Fourier variables. Hence, the function space $\Hcal_{K_G}(\RD)$ is ``small'' and, in term, provides limited ``information'' on the functions.  


\vskip.05cm \par{\bf Truncated kernel $K_T$.} 
A more interesting and also quite standard choice is obtained by truncation in the physical space, namely,
 $\chi_T(x) = (1- |x|_1)_+^l$ (with $l \geq D/2$) 
where the notation $a_+:= \sup(a,0)$ stands for the positive part. This kernel is only Lipschitz continuous in the physical space. 


\subsection{Periodic kernels of interest} 
\label{ELBK}

\vskip.05cm \par{\bf Objective.}
We now present, in their rescaled form, the four periodic kernels already listed in Table \ref{FDK2}
(and associated with each of the examples in Table~\ref{FDK}). Periodic kernels are translation-invariant, i.e. $K^\per(x,y) = \chi^\per(x-y)$ and we plot the corresponding functions $\chi^\per$ in Figure~\ref{lbkerns}. Moreover, we also plot the level set of the Fourier transform of $\hchi$ in  Figure~\ref{fig:LEVELSET} for the two-dimensional case. 
For the sake of simplicity in the notation, since the lattice and the dual lattice coincide we simply write $\alpha$
(instead of $\alpha^*$) for a general element of the lattice or dual lattice.

\begin{figure}

\centering{\includegraphics[width=0.45\linewidth]{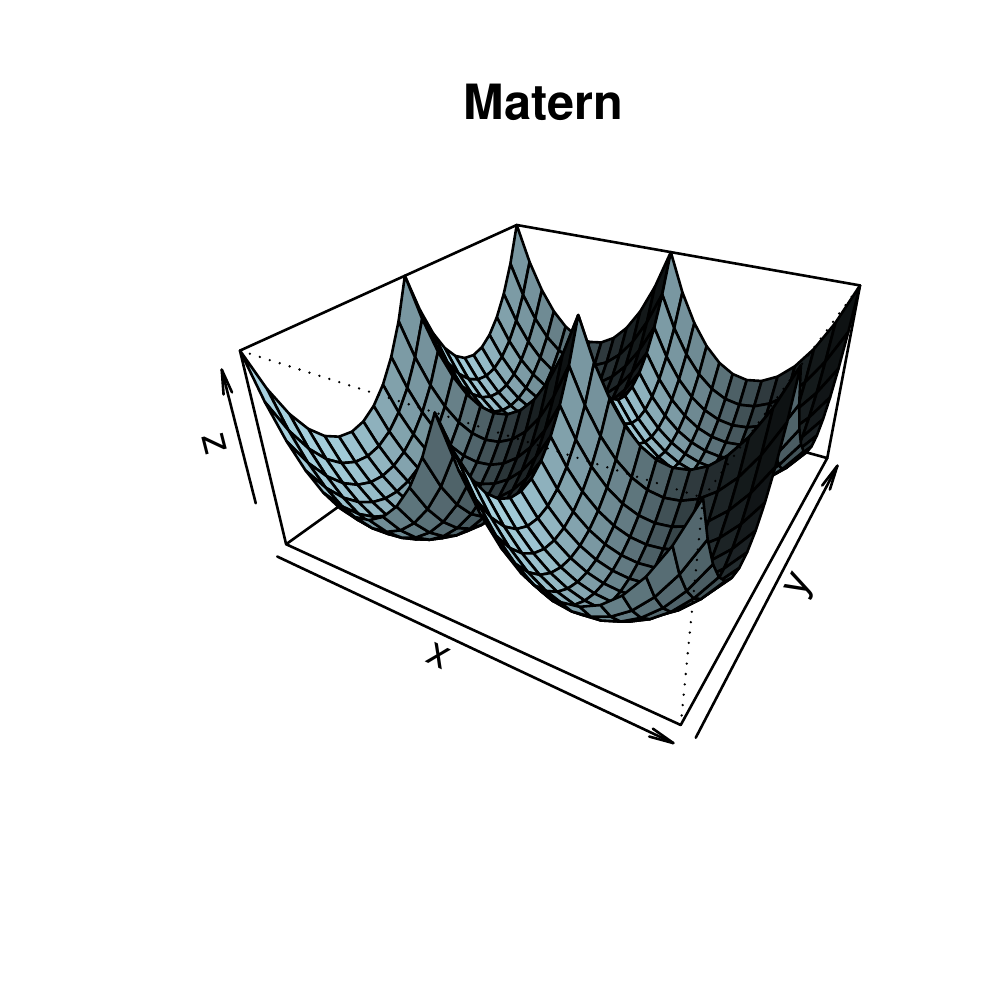} 
\includegraphics[width=0.45\linewidth]{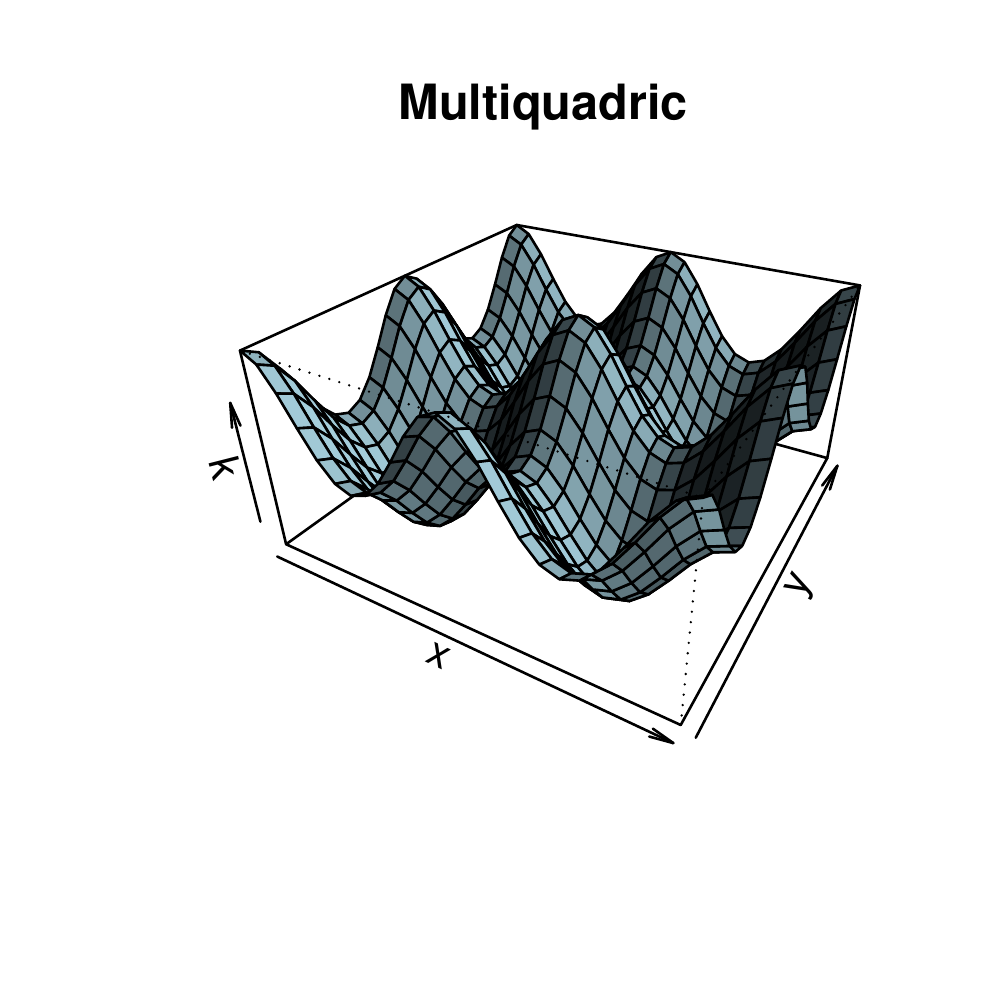} 
\includegraphics[width=0.45\linewidth]{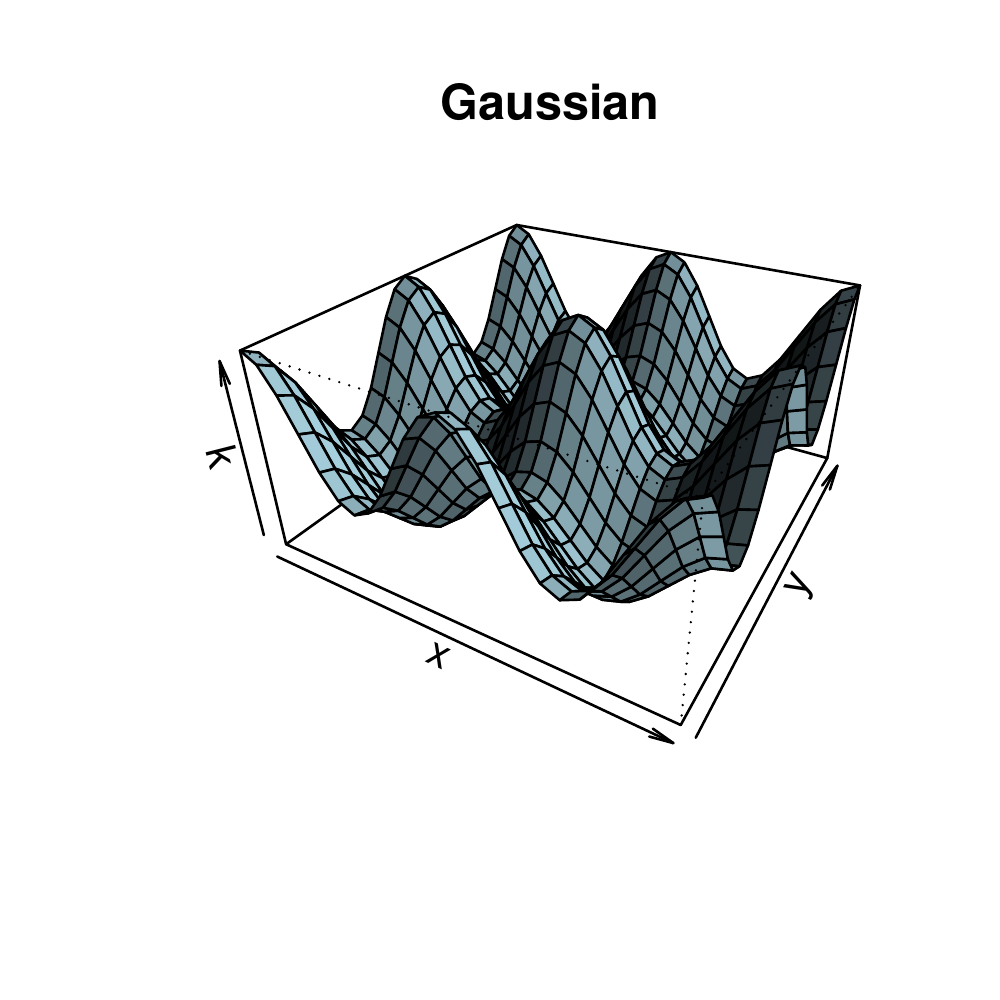} 
\includegraphics[width=0.45\linewidth]{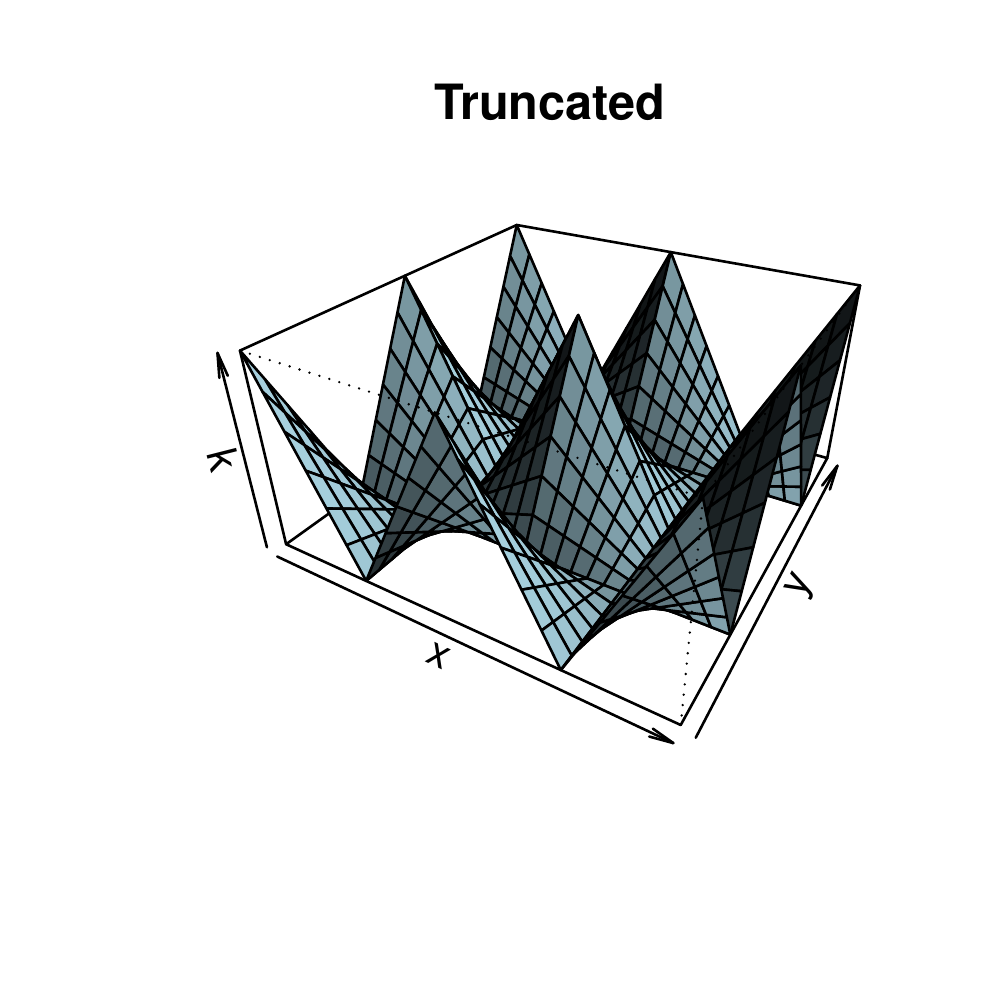} 
}

\caption{\label{lbkerns} Plots of periodic kernels in two dimensions: exponential, multiquadric, Gaussian, and truncated, respectively.}\label{fig:latticebk}
\end{figure}

\begin{figure}

\centering{\includegraphics[width=0.45\linewidth]{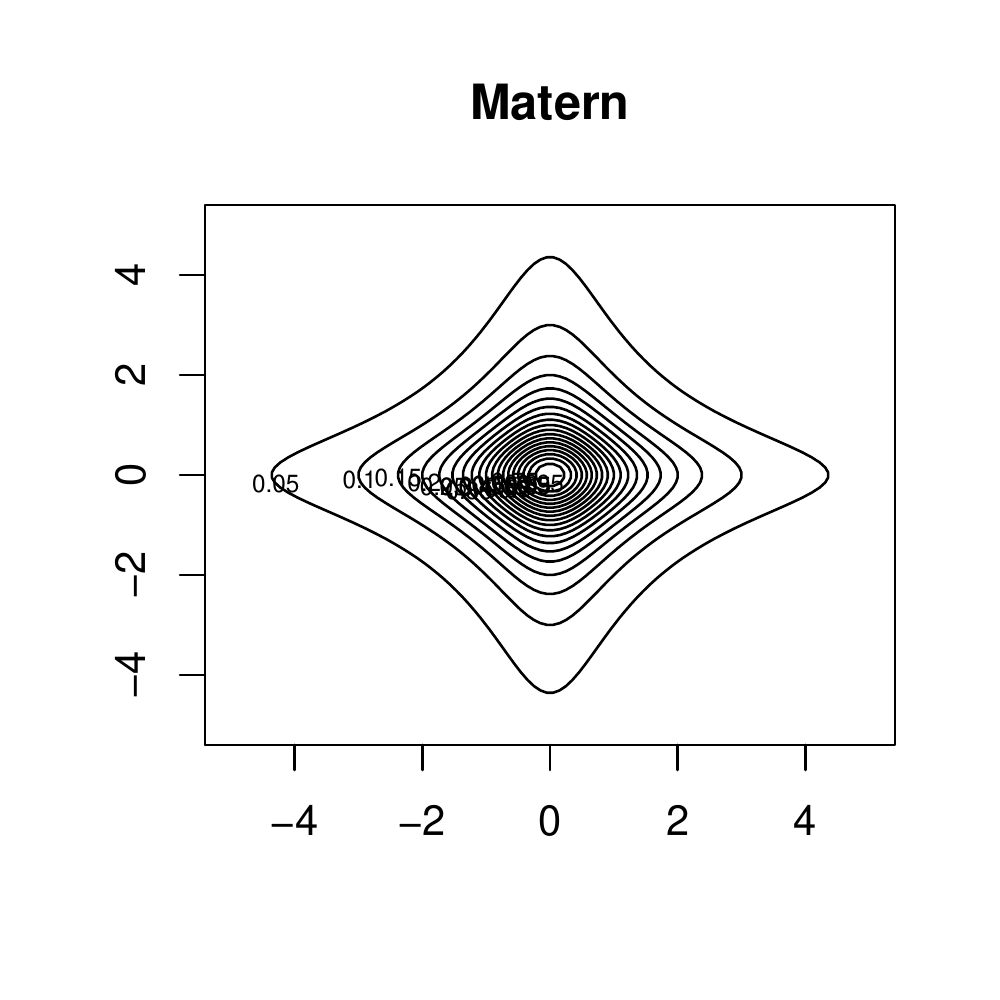} 
\includegraphics[width=0.45\linewidth]{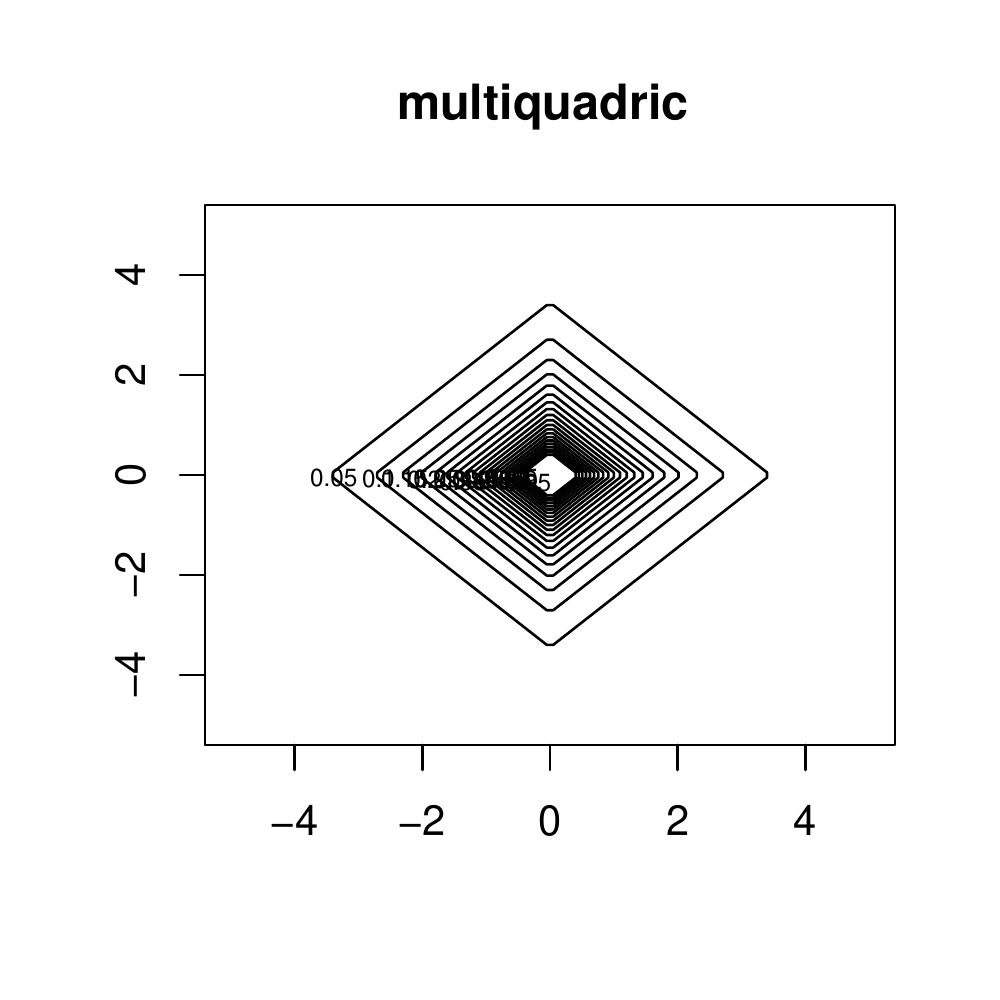} \includegraphics[width=0.45\linewidth]{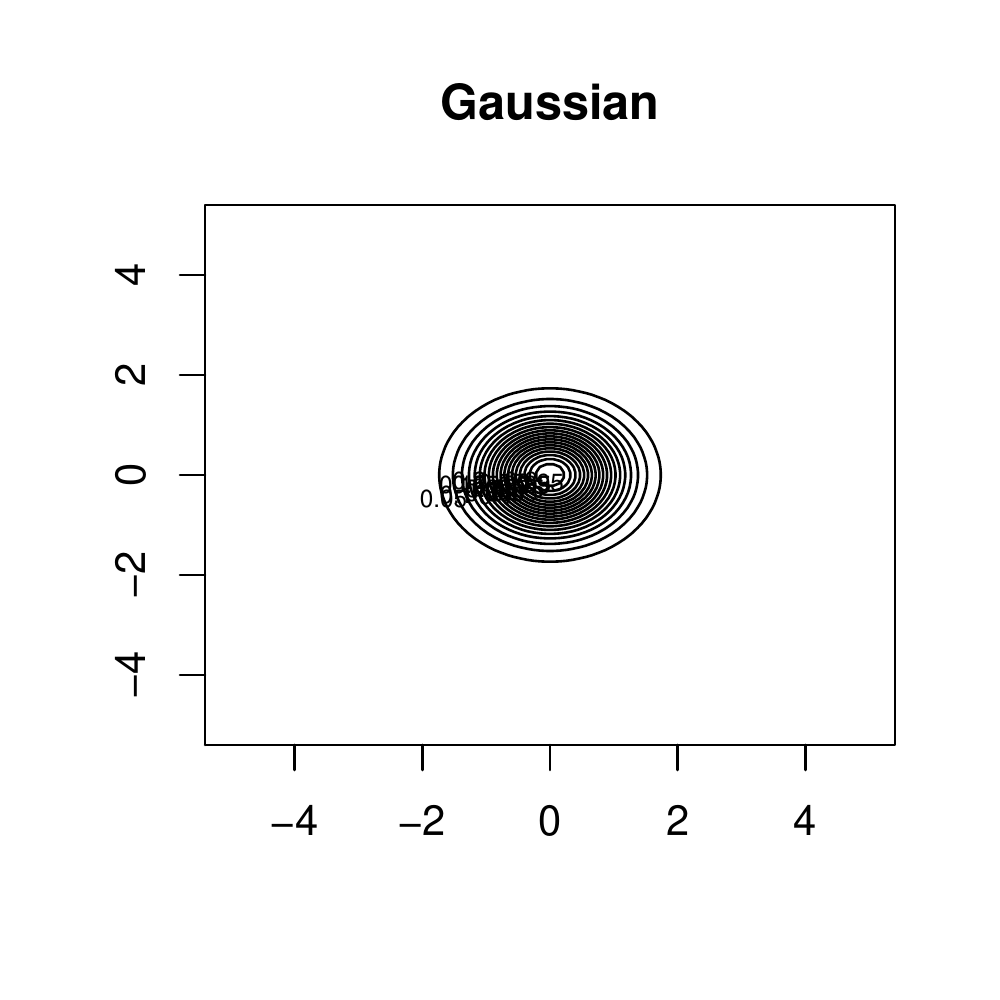} \includegraphics[width=0.45\linewidth]{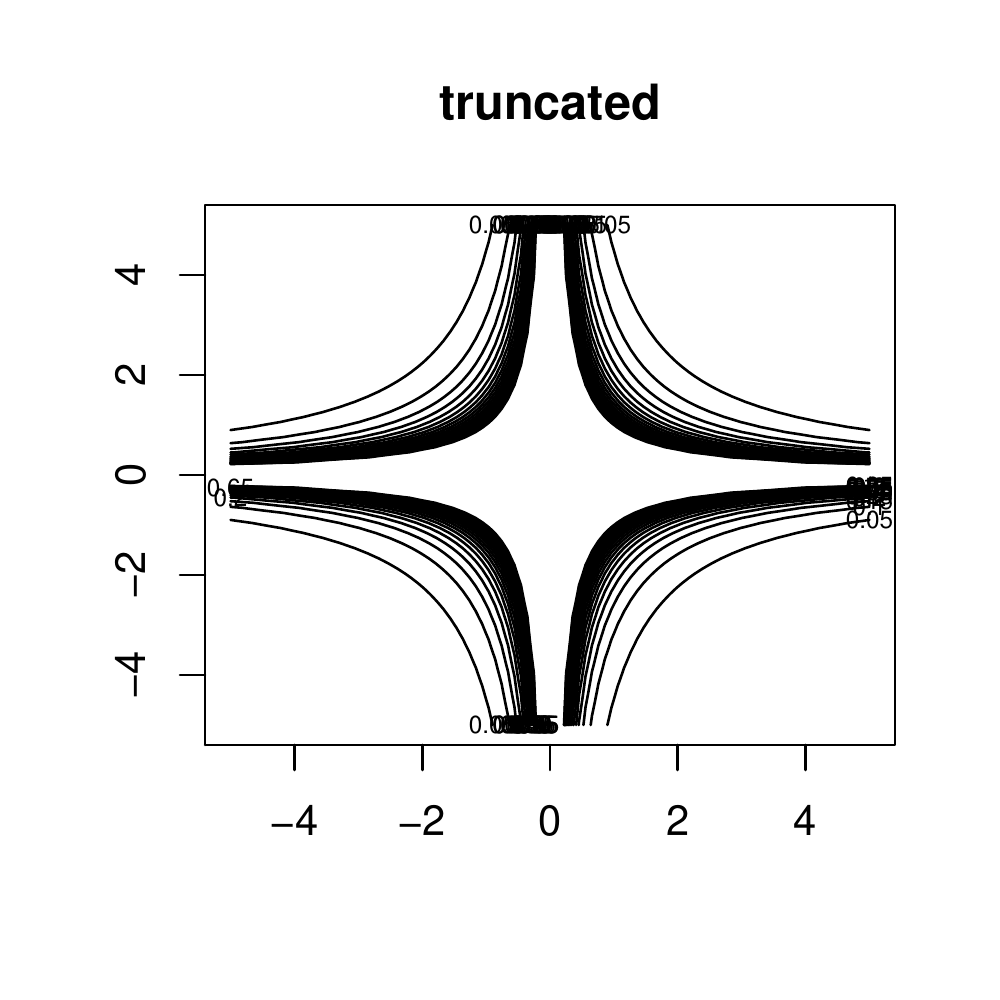} 
}
\caption{\label{LEVESET} Level sets of periodic kernels in two dimensions:  exponential, multiquadric, Gaussian, and truncated, respectively.}\label{fig:LEVELSET}
\end{figure}

\vskip.05cm \par{\bf Periodic tensorial exponential kernel $K^\per_E$.}
\bse
Consider the one-dimensional exponential kernel given by
\bel{exponential}
K_E(x,y) =  \exp(-|x-y|) = \chi_E(|x-y|), \qquad x, y \in\RR,
\ee
with $\hchi_E(\xi)= \frac{2}{1+4 \pi^2 \xi^2}$. 
We make this kernel tensorial and periodic using the localization method in Section~\ref{loc-peri} (see also \eqref{PER} below), that is, for $x,y \in [0,1]^D$  
\be
K_E^\per(x,y) = \chi_E^\per(x-y) = \sum_{\alpha \in \ZZ^D} \frac{e^{2 i \pi <x-y,\alpha>}}{\prod_{d=1}^D (1+4 \pi^2 \alpha_d^2 / \tau_D^2)} 
= \prod_{d=1}^D \sum_{\alpha_d \in \ZZ} \frac{ e^{2 i \pi (x_d-y_d)\alpha_d}}{1+ 4 \pi^2\alpha_d^2 / \tau_D^2}. 
\ee
Here, we have also introduced a parameter $0 < \tau_D \to 0$ as $D \to +\infty$. 
Observe that the Fourier transform of
$\tau_D e^{-\tau_D|x|}$ is
$\frac{2}{1+4 \pi^2 \alpha_d^2 / \tau_D^2}$. Thus, thanks to the
Poisson formula \eqref{PF}, our kernel coincides with 
\be
\aligned
K^\per_E(x,y) 
& = \sum_{\alpha \in \ZZ^D} \tau_D \exp(-\tau_D |x-y + \alpha|_1) 
  = \tau_D \sum_{\alpha \in \ZZ^D}  \prod_{d=1}^D \exp(-\tau_D |x_d-y_d + \alpha_d|) 
\\
& = \prod_{d=1}^D \frac{2 \tau_D}{e^{\tau_D} - 1}\Big(\exp(\tau_D|x_d-y_d|)+\exp\big(\tau_D(1-|x_d-y_d|)\Big).
\endaligned
\ee
We plot the corresponding function $\chi_E^\per$ in Figure~\ref{fig:KERND-MA} for several dimensions. The Banach space associated to this kernel (defined in \eqref{58}) reads 
\bel{MAT1P}
  \Hcal_{K^\per_E}^{s,p}([0,1]^D) = \Big\{\varphi \text{ is periodic on $[0,1]^D$} \, 
\Big/ \,
 \Big\| \Big(\prod_{d=1}^D(1+4 \pi^2\alpha_d^2 / \tau_D^2) \Big)^{s/p}\hvarphi \Big\|_{\ell^{p'}(\ZZ^D)} 
<  + \infty\Big\}. 
\ee 
In particular, the space $\Hcal_{K_E^\per}^{1,1}([0,1]^D)$ has been found to be relevant in mathematical finance.

\ese

\begin{figure}

\centering{\includegraphics[width=0.3\linewidth]{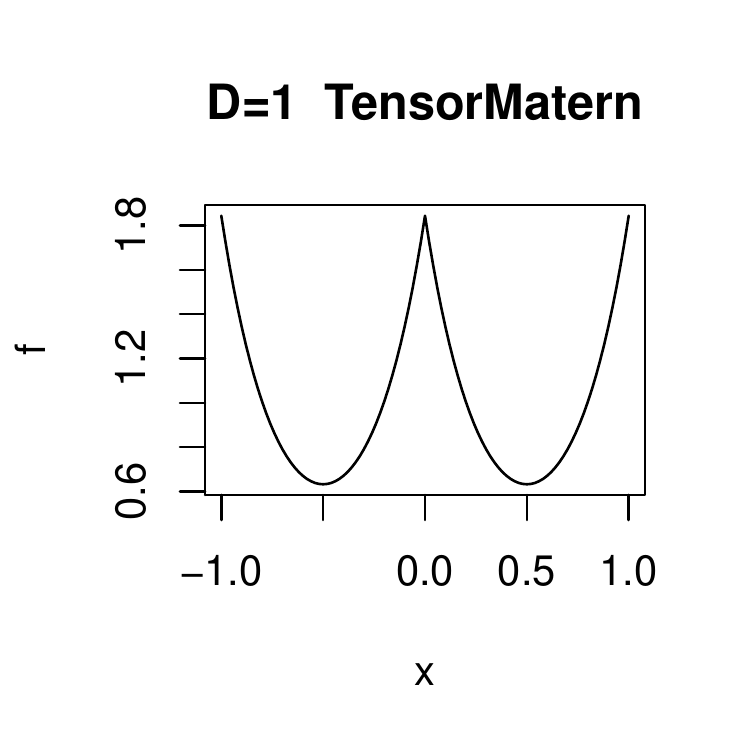} \includegraphics[width=0.3\linewidth]{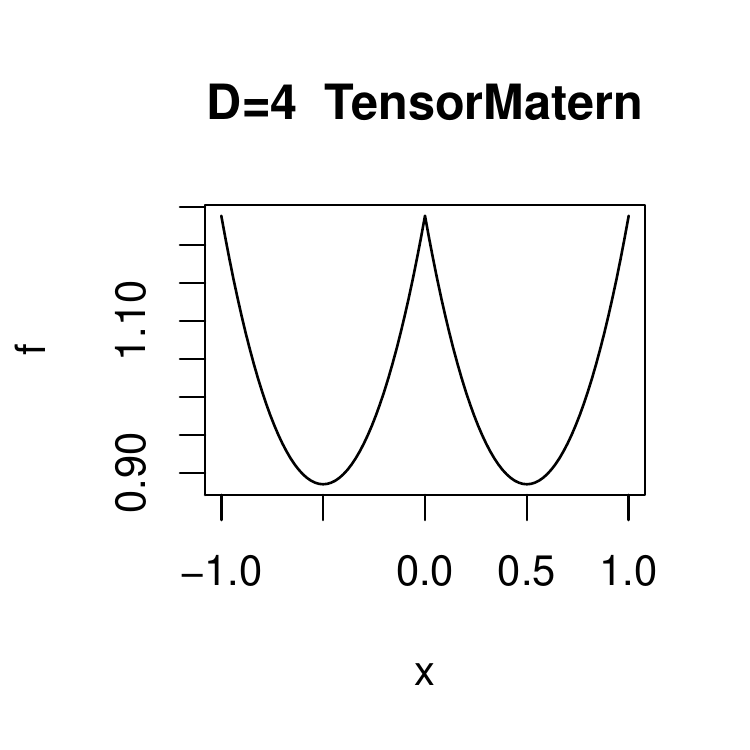} \includegraphics[width=0.3\linewidth]{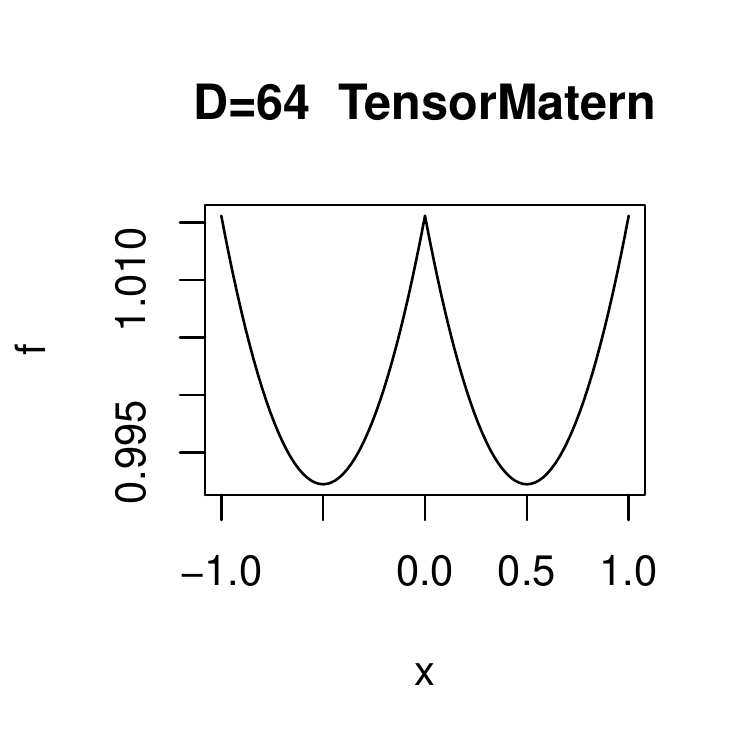} 
}
\caption{Periodic exponential kernel in dimension $D=1,4,64$.}
\label{fig:KERND-MA}
\end{figure}

\vskip.05cm \par{\bf Periodic Gaussian kernel.}
The Gaussian kernel is translation-invariant, radial, and smooth, and reads 
\be
K_G(x,y) = \exp(- |x-y|^2) = \prod_{d=1}^D \exp(-|x_d-y_d|^2) 
= \chi_G(x-y), 
\ee
with $\hchi_G(\xi) = 2^{-D/2} \exp(-|\xi|^2/4)$. 
We define its periodic version as 
\bel{KJ}
K_G^\per(x,y) 
:= \frac{1}{\tau^D} \sum_{\alpha \in \ZZ^D} \prod_{d=1}^D \exp(- \frac{|x_d-y_d + \alpha|^2}{4 \tau^2}) 
=  \prod_{d=1}^D\vartheta_3 (i \pi |x_d-y_d|,\tau_D),  
\ee
where
$\vartheta_3 (z;\tau_D )= \sum _{n \in \ZZ} \big( \exp(- \pi^2 \tau) \big)^{n^2}\cos(2 \pi n z)$
is nothing but the third Jacobi-theta function. Here, $\tau_D$ is a numerical parameter. 
The diagonal term enjoys the following decay 
$
 K_G^\per(x,x)  = \vartheta_3 (0;\tau_D )^D \simeq(1 + 2 e^{-\pi^2 \tau_D})^D, 
$
and we thus choose $\tau_D = \frac{\ln(2D)}{\pi^2}$ to ensure
$K_G^\per(x,x) \le e$. Let us denote (with some abuse of notation) 
$K_G^\per(x,y) = \chi_G^\per(x-y) = \prod_d \chi_G^\per(x_d - y_d)$. We plot the function $\chi_G^\per$ in Figure~\ref{fig:KERND-GAUSSIAN} for several dimensions.

\begin{figure}

\centering{\includegraphics[width=0.3\linewidth]{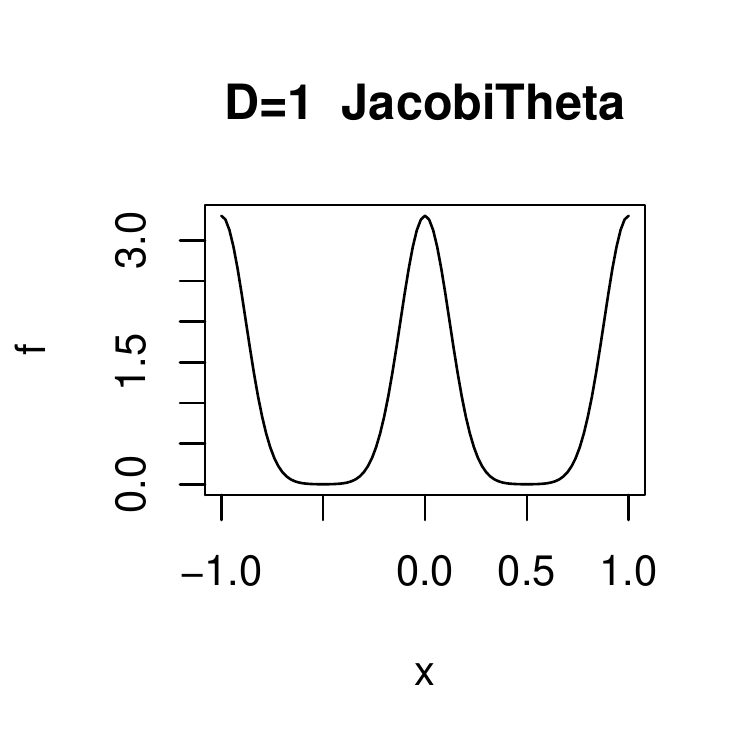} \includegraphics[width=0.3\linewidth]{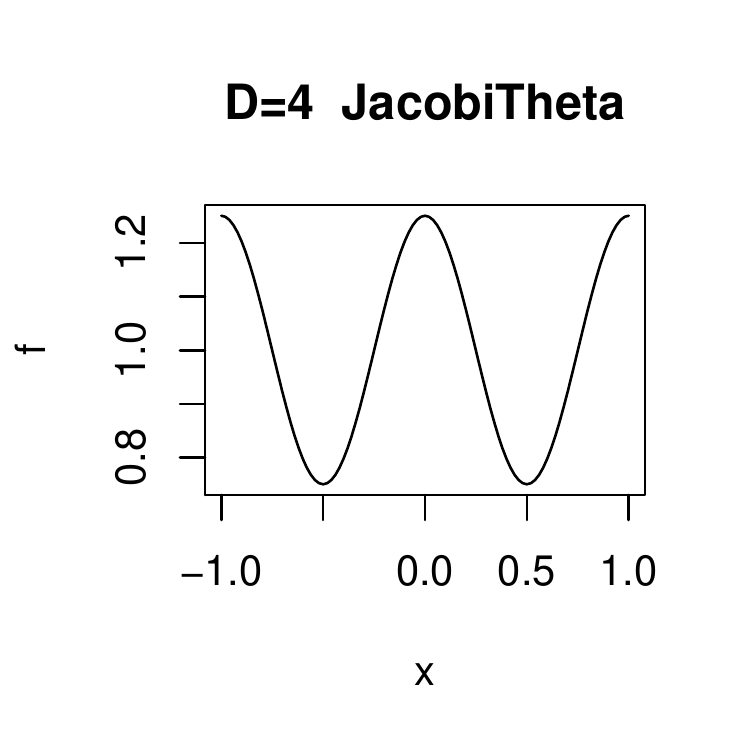} \includegraphics[width=0.3\linewidth]{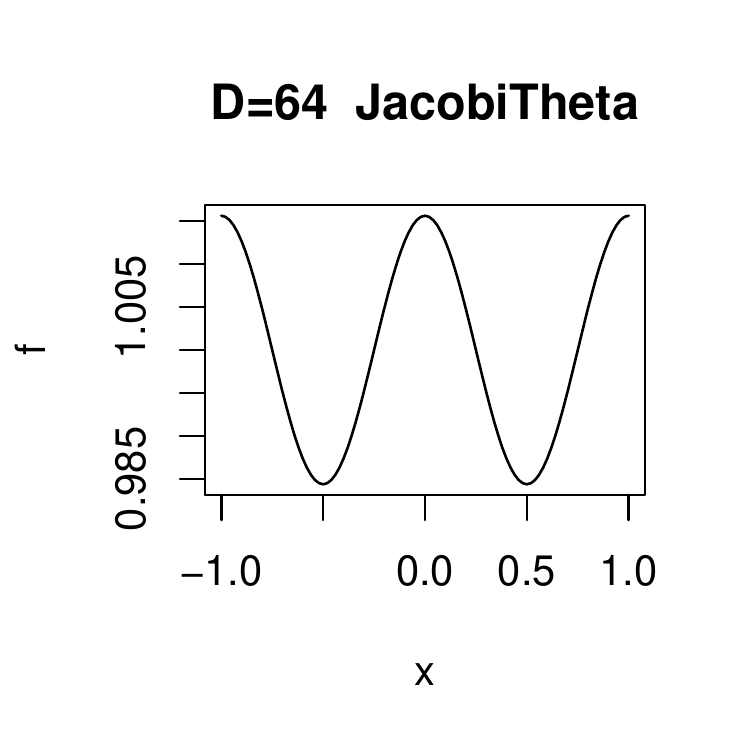} 
}
\caption{Periodic Gaussian kernel in dimension $D=1,4,64$}\label{fig:KERND-GAUSSIAN}
\end{figure}

\vskip.05cm \par{\bf Periodic multiquadric kernel.} 
The one-dimensional multiquadric kernel is translation-invariant, radially-symmetric, and smooth, and reads 
\bse
\bel{MQ}
K_M(x,y) = \frac{1}{1+|x-y|^2} = \chi_M(|x-y|), \qquad x,y \in \RR, 
\ee
with $\hchi_M(\xi)= \exp(-|\xi|)$, so this kernel is nothing but the Fourier transform of the exponential kernel. above.
 We define its periodic version by setting, for all $x,y \in [0,1]^D$, 
\be
K_M^\per(x,y) 
:= \sum_{\alpha \in \ZZ^D} \prod_{d=1}^D e^{2 i \pi <x-y,\alpha>-\tau_D|\alpha|_1} = \prod_{d=1}^D \sum_{\alpha_d \in \ZZ} e^{2 i \pi (x_d-y_d)\alpha_d-\tau_D|\alpha_d|}, 
\ee
where $\tau_D$ is a parameter. Observe that the inverse Fourier transform of $\tau_D \exp(-\tau_D|x|)$ is
$\frac{1}{1+\alpha_d^2 / \tau_D^2}$. Thanks to the Poisson formula, the kernel $K_M^\per$ coincides with 
(for $x,y \in [0,1]^D$) 
\be
K_M^\per(x,y) 
= \prod_{d=1}^D\sum_{\alpha_d \in \ZZ} \frac{1}{1+(x_d-y_d + \alpha_d)^2 / \tau_D^2} = \prod_{d=1}^D \frac{ \sinh(2 \pi \tau_D)}{\cosh(2 \pi \tau_D)-\cos(2 \pi (x_d-y_d))}.
\ee
The diagonal contribution enjoys the following asymptotics 
$$
K_M^\per(x,x) = \prod_{d=1}^D\sum_{\alpha_d \in \ZZ} \frac{1}{1+\alpha_d^2 / \tau_D^2} = \Big( \frac{ \sinh(2 \pi \tau_D)}{\cosh(2 \pi \tau_D)-1} \Big)^D = \coth(\pi \tau_D/2)^D.
$$
Hence we choose $\tau_D = \frac{2\coth^{-1}(1+1/D)}{\pi}=\frac{2}{\pi}(\ln(2+1/D) - \ln(1/(D+1)))$
to ensure that $K_M^\per(x,x) \le e$ uniformly for any dimension. Finally, we write $K_M^\per(x,y) = \chi^\per_M(x-y) = \prod_d \chi_M^\per(x_d - y_d)$ (with some abuse of notation), and we plot the function $\chi_M^\per$ in Figure~\ref{MQ:figs} for several dimensions.
\ese


\begin{figure}

\centering{\includegraphics[width=0.3\linewidth]{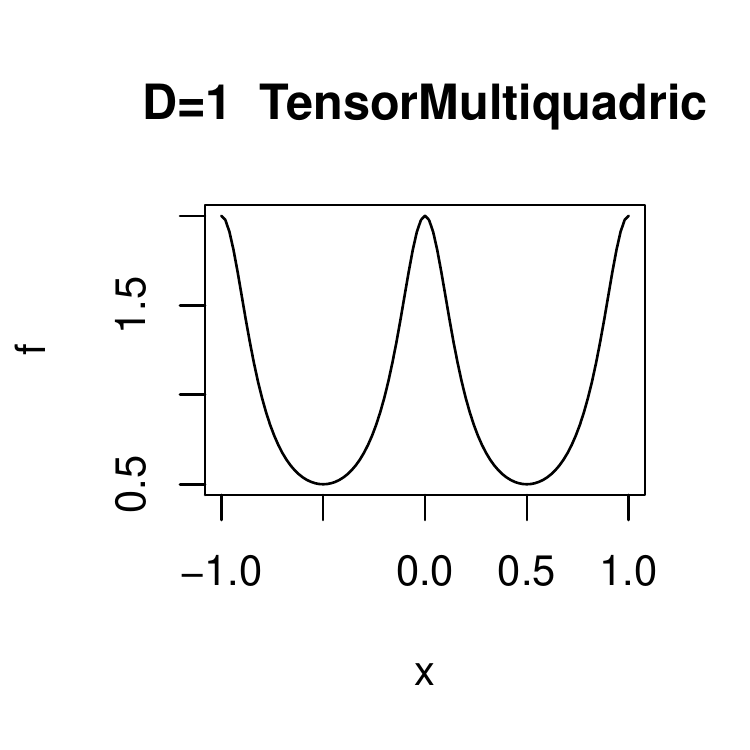} \includegraphics[width=0.3\linewidth]{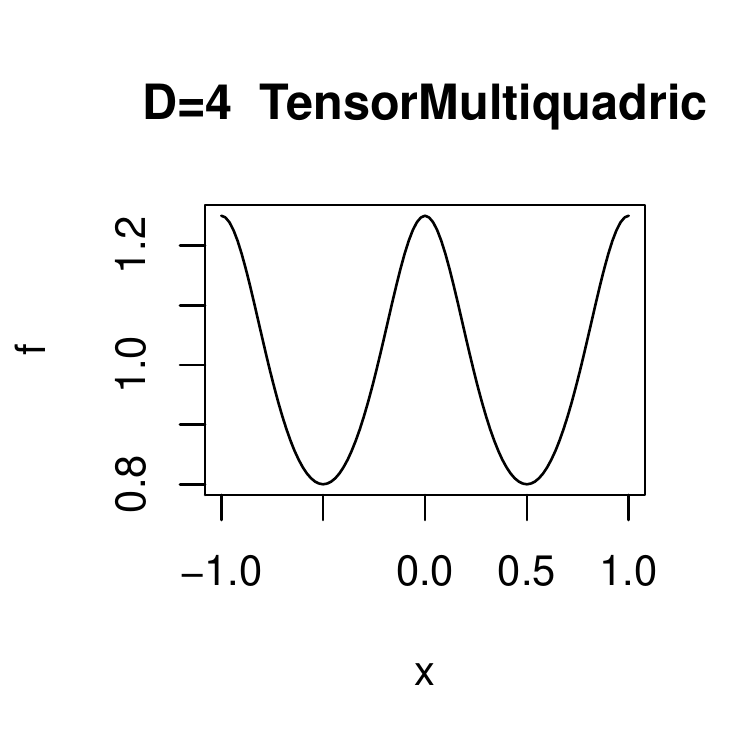} \includegraphics[width=0.3\linewidth]{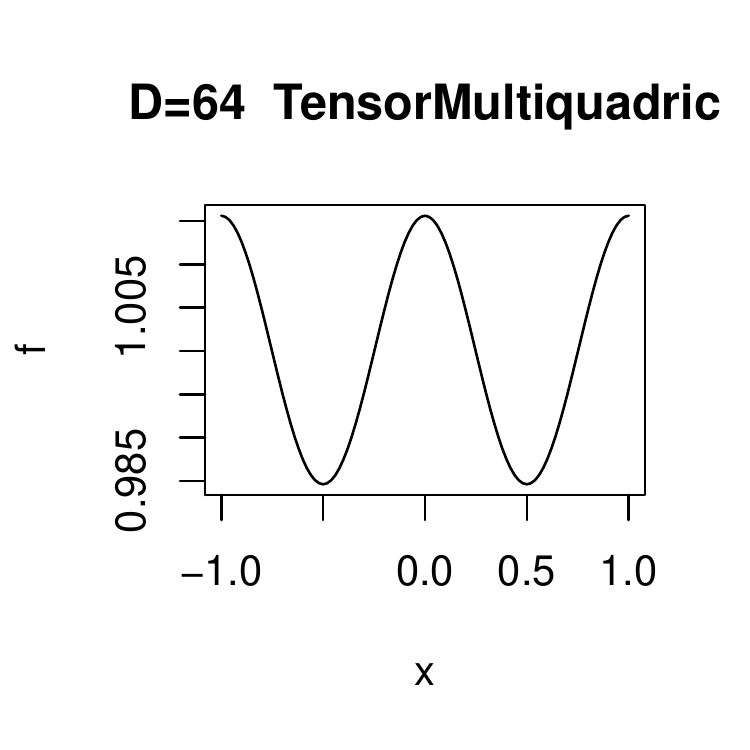} 

}

\caption{\label{MQ:figs} Periodic multiquadric kernel in dimension $D=1,4,64$.}
\label{fig:KERND-MQ}
\end{figure}

\vskip.05cm \par{\bf Periodic truncated kernel.} 
\bse
The truncated kernel is translation-invariant and Lipschitz continuous only, and reads 
\be
K_T(x,y)=  \sup(1- |x-y|,0) = \chi(x-y), \quad \hvarphi(\xi) =  \frac{ \sin^2(\pi \xi / 2)}{(\pi\xi/2) ^2}. 
\ee
We emphasize that its tensor product 
\be
\prod_{d=1}^D \sup(1- 2|x_d-y_d|,0)= \prod_{d=1}^D \chi(x_d-y_d), 
\qquad 
\hvarphi(\xi) =  \prod_{d=1}^D \frac{ \sin^2(\pi \xi_d / 2)}{(\pi\xi_d/2) ^2}
\ee
typically arises when designing a finite difference scheme on a Cartesian grid (say of the type $z_d^n = n/N$). 
Using again the Poisson formula \eqref{PF} we define its periodic version as 
\be
K_T^\per(x,y)
:= \sum_{\alpha \in \ZZ^D} \prod_{d=1}^D \tau_D\sup(1- \tau_D|x_d-y_d + \alpha_d|,0) 
=  \sum_{\alpha^* \in \ZZ^D} \prod_{d=1}^D \tau_D\frac{ \sin^2(\pi \alpha_d / \tau_D)}{ (\pi \alpha^*_d / \tau_D)^2}e^{2i \pi (x_d-y_d)\alpha^*_d}, 
\ee
where $1 \le \tau_D \le 2$ is a parameter, chosen so that the sum is finite, thus 
\be
K_T^\per(x,y)= \prod_{d=1}^D \sum_{\alpha_d \in \{-1,0,1\}} \tau_D\sup(1- \tau_D|x_d-y_d + \alpha_d|,0).  
\ee
The diagonal term is $K_T^\per(x,x) = \tau_D^D$, and we thus choose $\tau_D = 1+ 1/D$ in order to ensure that $K_T^\per(x,x) \le e$.
Writing $K_T^\per(x,y) = \chi_T^\per(x-y) = \prod_d \chi_T^\per(x_d - y_d)$, we plot $\chi_T^\per$ in Figure~\ref{fig:KERND-TRUNCATED} for several dimensions. 
\ese


\begin{figure}

\centering{\includegraphics[width=0.3\linewidth]{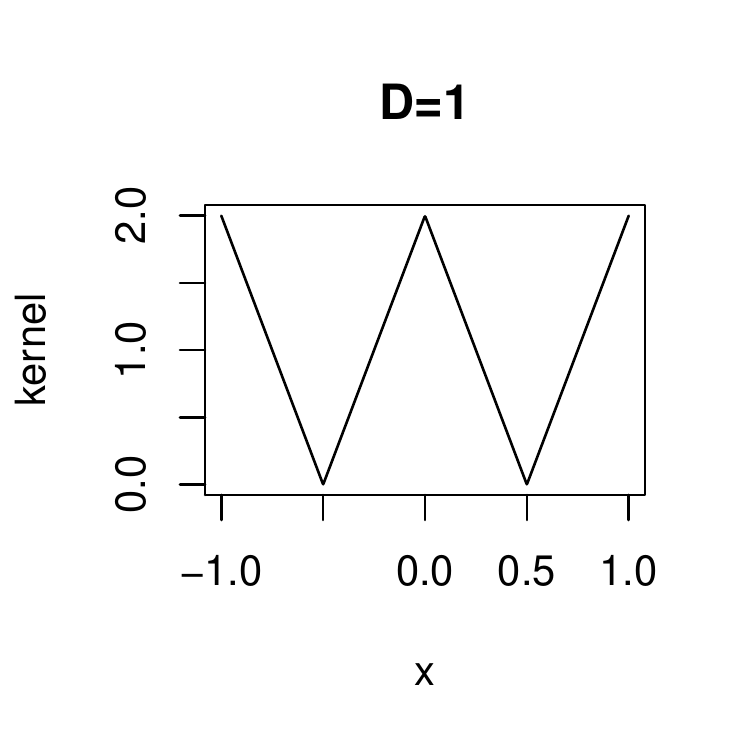} \includegraphics[width=0.3\linewidth]{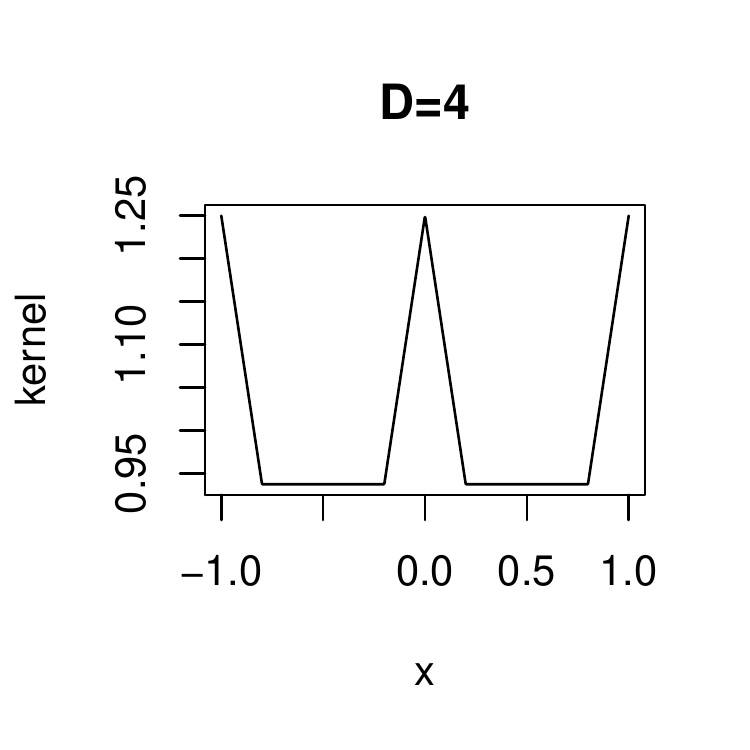} \includegraphics[width=0.3\linewidth]{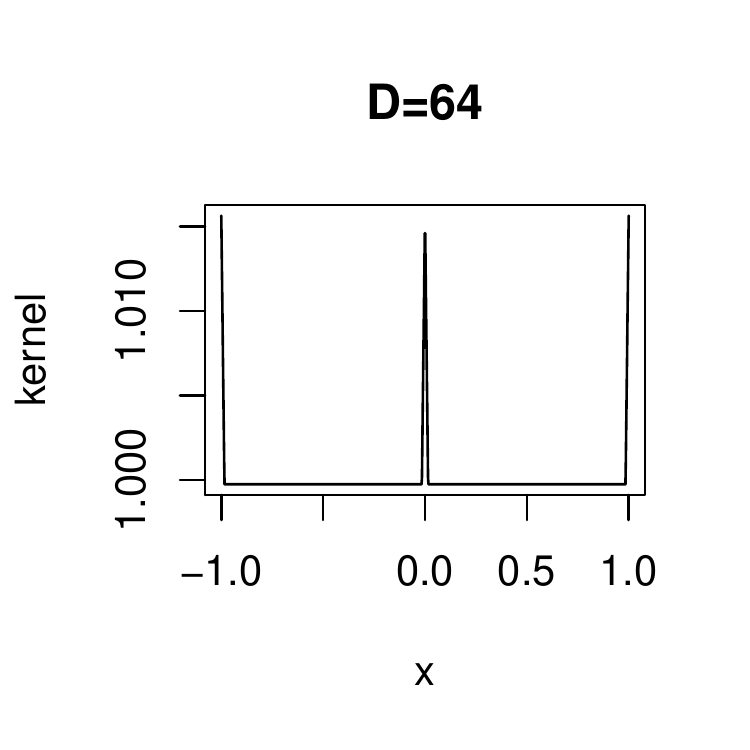} 
}
\caption{Periodic truncated kernels in dimension $D=1,4,64$}
\label{fig:KERND-TRUNCATED}
\end{figure}


\begin{figure}

\centering{\includegraphics[width=0.45\linewidth]{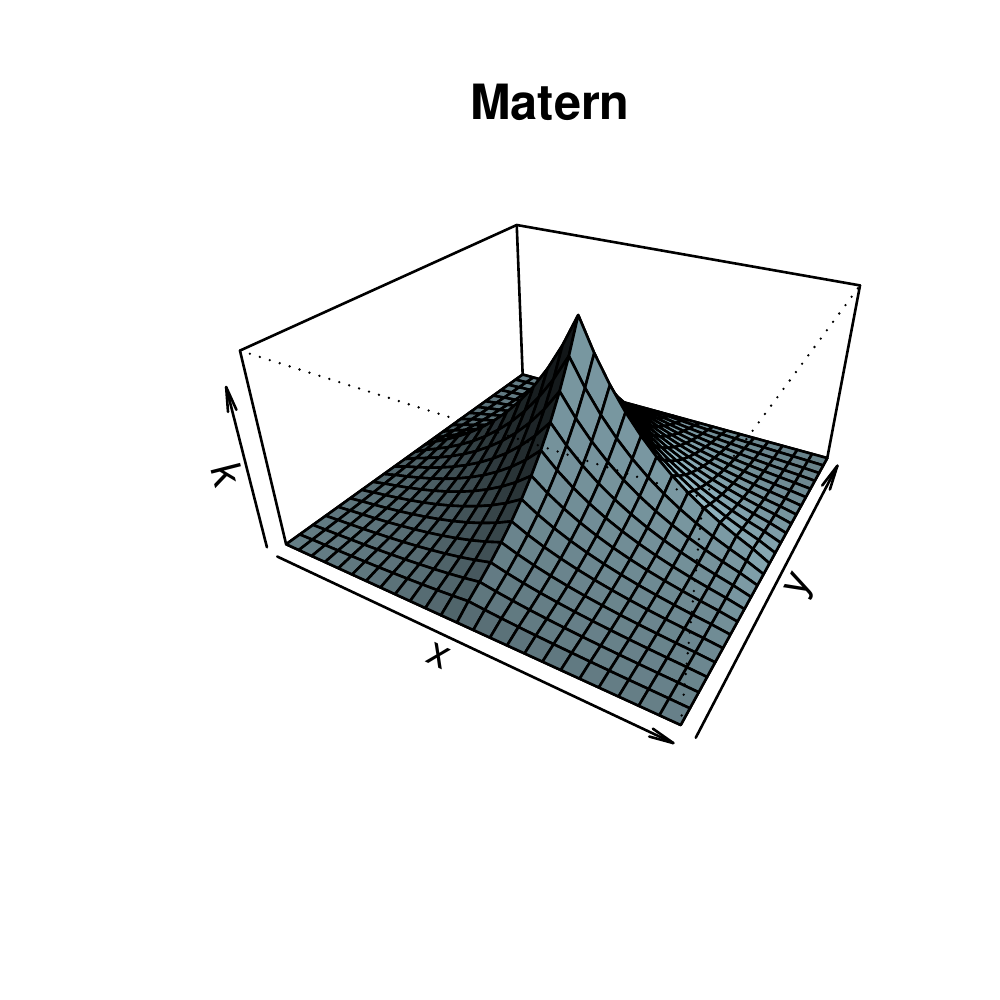} \includegraphics[width=0.45\linewidth]{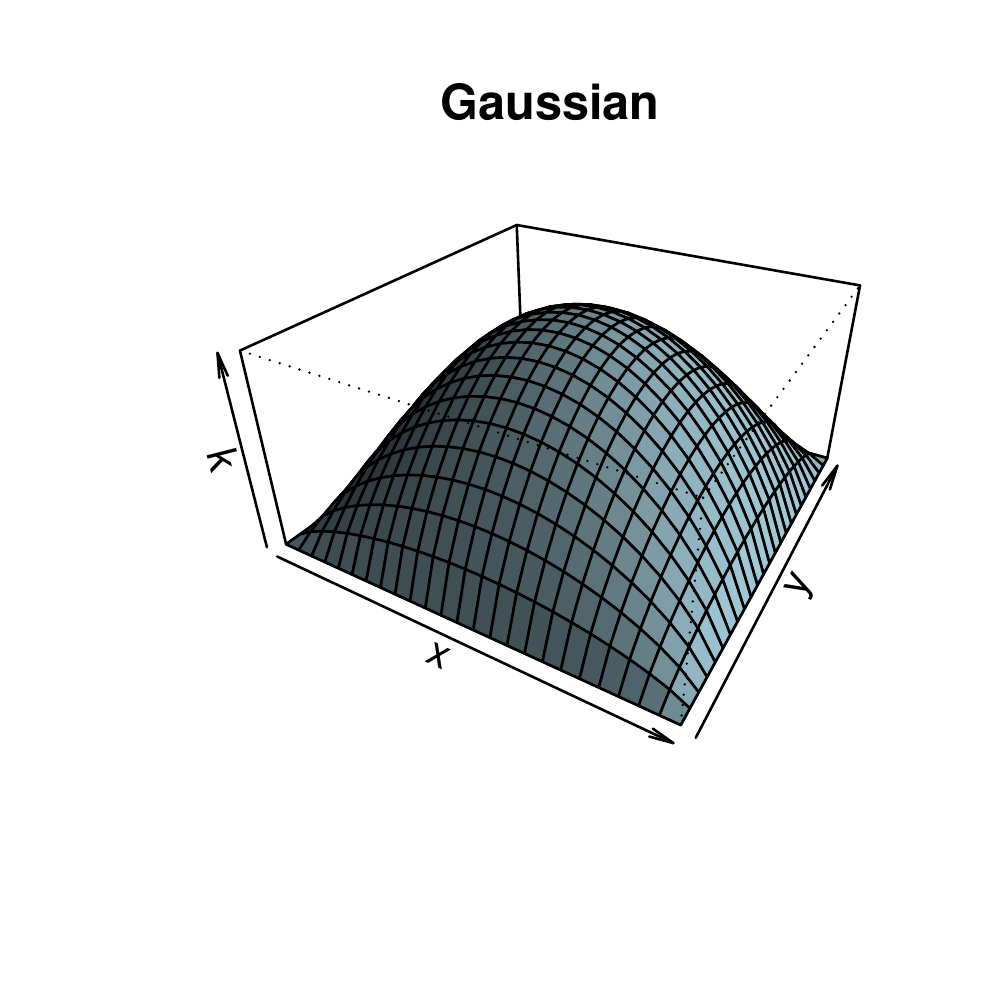} \includegraphics[width=0.45\linewidth]{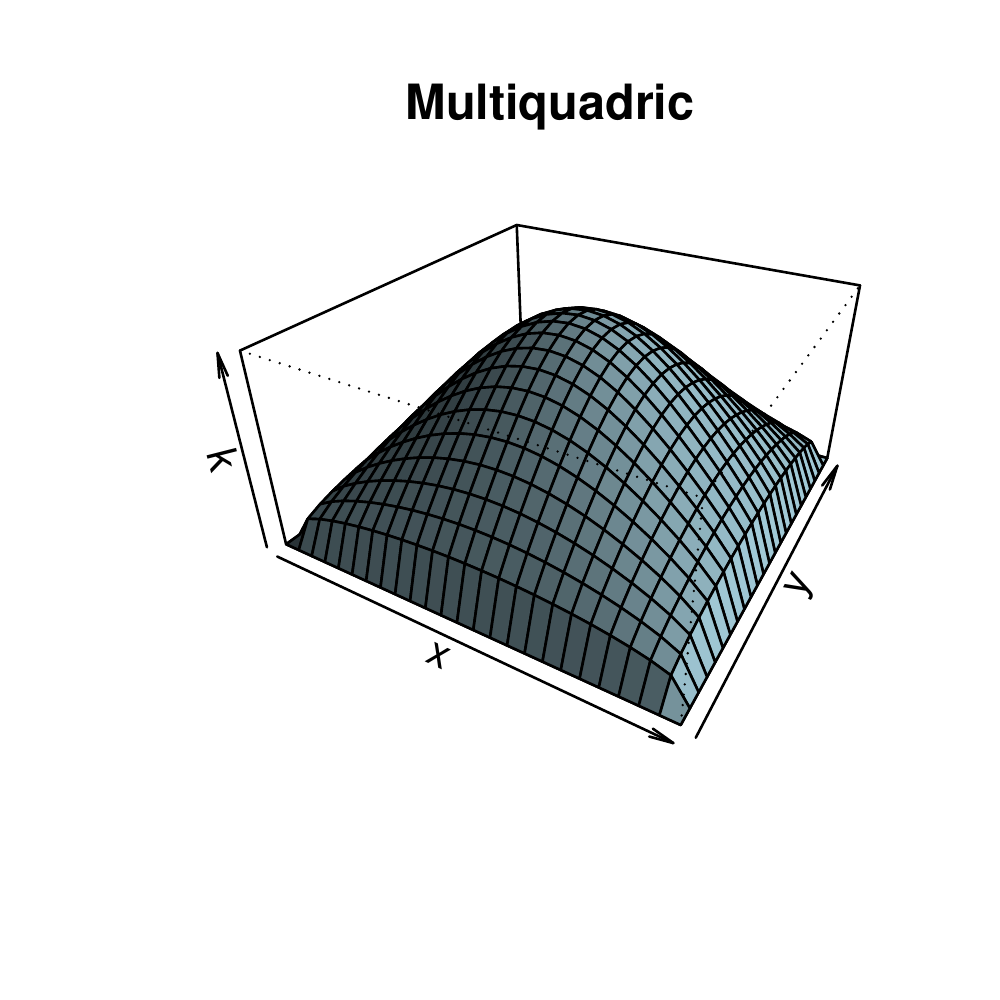} \includegraphics[width=0.45\linewidth]{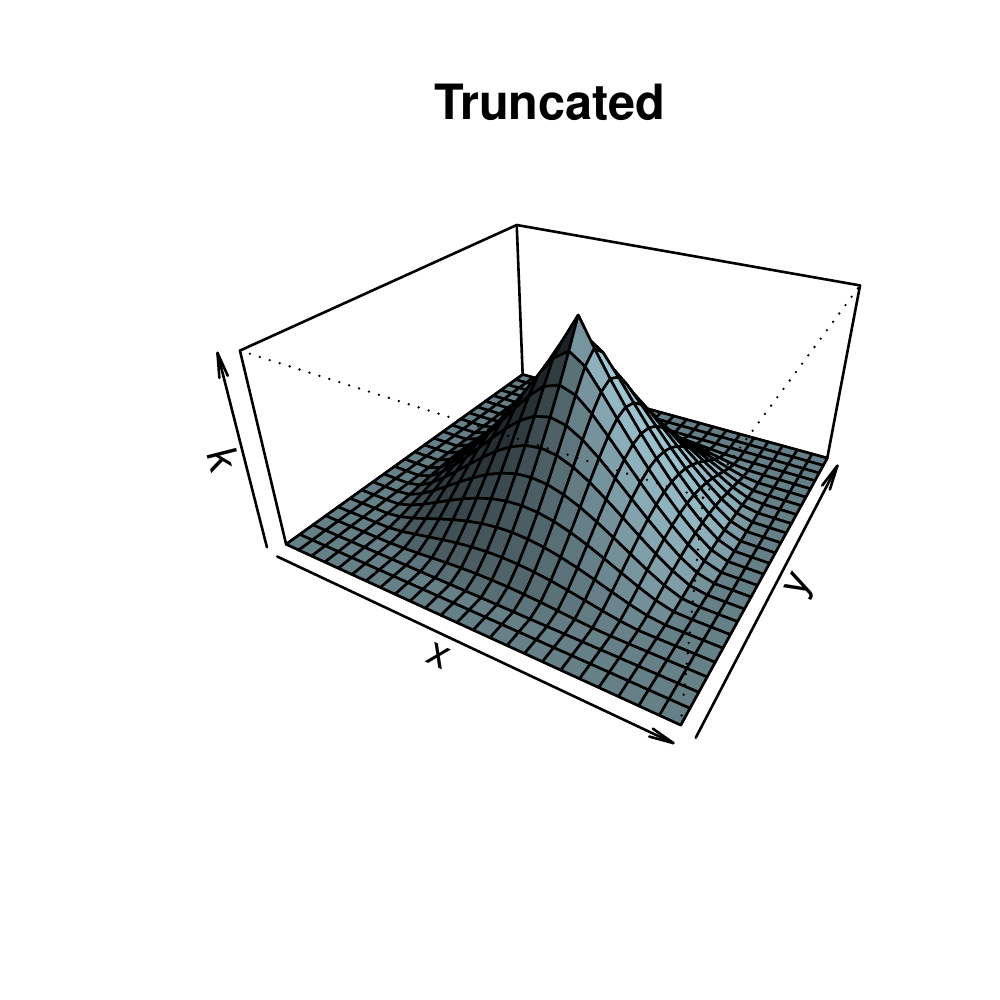} 
}
\caption{\label{kerns} Transported kernels in two dimensions: exponential, multiquadric, Gaussian, and truncated, respectively.}\label{fig:TIK}
\end{figure}


\subsection{Transported kernels of interest}
\label{ETK}

\vskip.05cm \par{\bf Objective.}
An admissible kernel $K$ defined on an open set $\Omega$ is said to be a {\it compactly supported} if it extends to a continuous function on the closure $\Omegab$ and this extension vanishes on the boundary $\del\Omega$. For our numerical experiments, we design four compactly supported kernels defined on the unit cube $[0,1]^D$,  determined from the four examples in Table~\ref{FDK}.  Figure~\ref{fig:TIK} displays $K(x,1/2)$ for the two-dimensional case.


\vskip.05cm \par{\bf Transported tensorial exponential kernel.}
\bse
In view of the standard expression of the exponential kernel, we introduce the following transported version defined on the unit cube $[0,1]^D$
\be
K_E^\trans(x,y) 
= {1 \over \beta_D} \exp\Big(-\tau_D \Big| \erf^{-1}(2x-1)-\erf^{-1}(2y-1) \Big|_1\Big), 
\ee
where $\erf^{-1}(x) = \Big( \erf^{-1}(x_1), \ldots, \erf^{-1}(x_D)\Big)$ and $|x|_1=\sum_d |x_d|$ and we 
the $\erf$ function reads $\erf(x) = (2 / \sqrt{\pi}) \int_0^x e^{-y^2} dy$. By Proposition \ref{TRK}, this kernel corresponds to the transport of the localized kernel
\be
K_E^\loc(x,y) = {1 \over \beta_D} \exp\big(-\tau_D  |x-y|_1\big) \exp\big(-|x|^2 - |y|^2\big), 
\ee
obtained from the map $S(x) = \erf^{-1}(2x-1)$. 
Here, we choose 
$$
\tau_D = \frac{\sqrt{\pi}}{D}, \qquad \beta_D = \Big( e^{\tau_D^2/4}(1 - \erf(\tau_D / 2) \Big)^D
$$
with, independently of the dimension $D$,  
$$
K_E^\trans(x,x) = \frac{1}{\beta_D} \simeq(1+\frac{\tau_D}{\sqrt{\pi}})^D \le e, 
\qquad 
\iint_{[0,1]^D \times [0,1]^D} K_E^\trans(x,y) \, dxdy = 1.
$$
\ese


\vskip.05cm \par{\bf Transported multiquadric kernel.}
In view of the expression of multiquadric kernel, we consider the following localized version on the unit cube $[0,1]^D$
\bse
\be
K_M^\trans(x,y) = \frac{\beta_D}{(1+\tau_D^2 |\erf^{-1}(2x-1)-\erf^{-1}(2y-1))|^2)^{(D+1)/2}}, 
\ee
corresponding to the following transported kernel
\be
K_M^\loc(x,y) = \frac{\beta_D \exp(-|x|^2 - |y|^2)}{(1+\tau_D^2|x-y|^2)^{D}}, \quad x,y \in \RD, 
\ee
where we choose 
$$ 
 \tau_D=\sqrt{\frac{2}{D}}, 
\qquad	\beta_D =  \Big( \frac{ 1 }{ \sqrt{\pi} \exp(1 / \tau_D^2) (1 - \erf(1/\tau_D))(1/\tau_D)} \Big)^D \simeq \Big( 1 + \frac{\tau_D^2}{2} \Big)^D,
$$
as $\tau_D \to 0$, determined so that
$$
K_M^\trans(x,x) = \beta_D \le e, \quad \int_{[0,1]^D}\int_{[0,1]^D} K^\trans(x,y)dxdx \simeq\frac{\beta_D }{\Big( \sqrt{\pi} \exp(\tau_D^2 / 2) (1 - \erf(1/\tau_D) ( 1 / \tau_D)\Big)^D} = 1. 
$$

\ese

\vskip.05cm \par{\bf Transported Gaussian kernel.}
In view the expression of the Gaussian kernel, we introduce the following localized version on the unit cube $[0,1]^D$
\bse
\be
K_G(x,y) = \phi(x-y) 
= \beta_D \exp\Big(- \tau_D^2 \sum_{d=1}^D \big( \erf^{-1}(2x_d-1)-\erf^{-1}(2y_d-1)\big)^2\Big), 
\qquad x,y \in [0,1]^D, 
\ee
where we choose 
$
\tau_D = \sqrt{\frac{2}{D}}$
and $\beta_D = (1 + \tau_D^2)^{D/2}$, 
determined so that 
$$
K_G^\trans(x,x) = \beta_D \le e, 
\qquad 
\int_{[0,1]^D}\int_{[0,1]^D} K_G^\trans(x,y)dxdx \simeq\frac{\beta_D}{(1 + \tau_D^2)^{D/2}} = 1. 
$$ 
\ese


\vskip.05cm \par{\bf Transported truncated kernel.}
In view of the expression of the truncated kernel, we consider the following localized version on the unit cube $[0,1]^D$
\bse
\be
K_T^\trans(x,y) = \frac{\beta_D}{(1+\tau_D|\erf^{-1}(2x_d -1)-\erf^{-1}(2y_d-1))|^2)^{D}}, 
\qquad x,y \in [0,1]^D, 
\ee
corresponding to the following transported kernel
\be
K_T^\loc(x,y) = \beta_D  \frac{\exp(-|x|^2 - |y|^2)}{(1+\tau_D|x-y|)^{D}}, \qquad x,y \in \RD. 
\ee
\ese 


\subsection{Further constructions}

\vskip.05cm \par{\bf A compactly supported, tensor product kernel.}
\bse
\label{KERNEL--ODK}
The example given now is not translation-invariant and is supported on the unit interval $\Omega = [0,1]$: 
\bel{ODK}
\Kseed(x,y):= y(1-x)1_{y\leq x} + x(1-y)1_{x\leq y}, \qquad x, y, \in \RR. 
\ee
Observe that $\Kseed(0,y)=\Kseed(1,y)=0$, and that the Fourier technique above does not apply.
For future reference we compute 
\bel{K1DC} 
\begin{aligned}
& \del_y \Kseed(x,y)  =  (1 - x)1_{y <  x} - x 1_{y >  x}, 
\qquad
&& \del^2_y \Kseed(x,y)= - \delta_x
\\
& \int_{[0,1]} \Kseed(x,y) dy  =  \frac{1}{2}x(1-x),
 \qquad 
&& \iint_{[0,1]^2} \Kseed(x,y) dx dy =  \frac{1}{12}, 
\end{aligned}
\ee
together with the following integration by part formula: 
\be
\int_{[0,1]} \del_2 \Kseed(\cdot, x)\del_2 \Kseed(\cdot, y) dz 
= \int_{[0,1]} \del_2 ( \Kseed(\cdot, x) \del_2 \Kseed(\cdot, y) ) dz 
+ \Kseed(x,y) 
=  
\Kseed(x,y). 
\ee
Hence, the space $\Hcal_{\Kseed}(0,1)$ is the homogeneous Sobolev space 
$\dot H_0^1([0,1])$ of functions that vanish at the boundary, defined by density from the set of smooth functions 
supported on $(0,1)$ in the $\ell^2(0,1)$-norm. 
The eigenvalues and eigenfunctions of the spectral decomposition 
is given by solving 
$\int_{[0,1]} \Kseed(\cdot,x) \zeta_i(x) dx = \lambda_i \zeta_i$ 
or, equivalently, $\del_x^2  \zeta_i = -\frac{1}{\lambda_i} \zeta_i$. We find 
$\zeta_i(x) = \sin(i \pi x ),\qquad \lambda_i = \frac{1}{( i \pi)^2}$
for $i = 1, 2, \ldots$ 
\ese


\vskip.05cm \par{\bf A multi-dimensional version.}
\bse
Using the kernel $\Kseed$ in \eqref{ODK}, in general dimensions we consider $\Omega = [0,1]^D$ and the following kernel
\bel{KNDC}
K(x,y) =  \prod_{1 \leq d \leq D}  \Kseed(x_d,y_d),
= \prod_{1 \leq d \leq D}  \Big( y_d(1-x_d)1_{y_d\leq x_d} + x_d(1-y_d)1_{x_d\leq y_d} \Big). 
\ee
In view of 
\be
\int_\Omega (\del_1  \ldots \del_D)  K(\cdot, x) (\del_1  \ldots \del_D) K(\cdot, y) dz
= 
\prod_{1 \leq d \leq D}  \Big(\int_{[0,1]} \del_d  \Kseed(x_d,\cdot) \del_d \Kseed(y_d,\cdot) dz \Big) = K(x,y), 
\ee
the space generated by this kernel corresponds to the norm 
$
\int_{(0,1)^D} \Big(|\varphi(x)|^2 + |\del_1 \ldots \del_D \varphi(x)|^2 \Big) dx.
$ 
Similarly to the one-dimensional case, we compute  
\bel{KNDC-deux} 
\aligned
& \del_d K(x,y)  =  (\del \Kseed)(x_d,y_d)\prod_{e \neq d} \Kseed(x_e,y_e),
\qquad
&& \prod_{1 \leq d \leq D}  \del_d^2 K(x,y)= (-1)^D\delta_x
\\
& \int_{[0,1]^D} K(x,y) dy  =  \prod_{1 \leq d \leq D}  \frac{1}{2}x_d(1-x_d), 
\qquad 
&& \iint_{[0,1]^D \times [0,1]^D} K(x,y) dx dy =  \frac{1}{12^D}.
\endaligned
\ee 
The eigenvalues and eigenfunctions of the the spectral decomposition are now 
\bel{EGND}
\zeta_\alpha(x) = \prod_{1 \leq d \leq D}  \sin(\alpha_d \pi x_d ),\qquad \lambda_\alpha = \frac{1}{\prod_{1 \leq d \leq D} ( \alpha_d \pi)^2}, \qquad \alpha = (\alpha_1,\ldots,\alpha_D) \in \NN^D. 
\ee 
\ese


\section{Three formulation of the discrepancy function}
\label{section--4}

\subsection{Formulation in physical variables}
\label{EFPV}

We now factor out the integration error into (1) a factor depending upon the {\sl regularity} of the function $\varphi$ under consideration, measured in the $\Hcal_K$-norm, which is is independent of the choice of the interpolation points, and 
(2)  a factor depending solely upon the kernel $K$ and the mesh points, which is independent of the choice of the function. 
Three equivalent formulations of the second term are now derived in the physical, spectral, or discrete Fourier variables. Although these formulations are in principle equivalent, they shed a very different light on the problem of interest.  
We begin with an expression in the physical variables, which we will not use directly for the derivation of actual estimates, as it appears to be difficult to work with. 


\begin{proposition}[Factorization in physical variables]
\label{PEFPV}
Consider an admissible kernel $K= K(x,y)$ defined on an open set $\Omega \subset \RD$. Then, for any function $\varphi$ in the  Hilbert Space $\Hcal_K(\Omega)$, one has
\bel{EKPV}
\Big|\int_\Omega  \varphi(x) dx - {1 \over N}  \sum_{1 \leq n \leq N}  \varphi(y^n) \Big| 
\leq 
E_K(Y,N,D) \,  \|\varphi\|_{\Hcal_K(\Omega)}
\ee
for any set $Y= (y_1, \ldots,y_N)$ of points in $\Omega$, with the error function
\bse
\bel{equa:45} 
E_K(Y,N,D)^2: =  \frac{1}{N^2} \sum_{n,m=1}^N \iint_{\Omega \times \Omega} \Big(   K(x,y) +   K(y^n,y^m) -  K(y^n, y)  -  K(x,y^m) \Big) dxdy
\ee
or, equivalently,  
\be
E_K(Y,N,D) = \| e_K(Y) \|_{\Hcal_K(\Omega)}, 
\qquad 
e_K(Y):= \int_\Omega   \Big( K(\cdot,x)   - {1 \over N}  \sum_{1 \leq n \leq N} K(\cdot,y^n) \Big) dx. 
\ee
\ese
\end{proposition}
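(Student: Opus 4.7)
The proof rests on the reproducing kernel identity $\varphi(x) = \langle \varphi, K(\cdot, x)\rangle_{\Hcal_K(\Omega)}$ from \eqref{538}, which recasts the quadrature error as a single inner product. I would proceed in four steps.

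First, I verify that the error functional
$$
\Lambda(\varphi) := \int_\Omega \varphi(x)\, dx - \frac{1}{N}\sum_{1 \leq n \leq N}\varphi(y^n)
$$
is continuous on $\Hcal_K(\Omega)$. This follows directly from the pointwise bound $|\varphi(x)| \leq \sqrt{K(x,x)}\,\|\varphi\|_{\Hcal_K(\Omega)}$ established in \eqref{539}, combined with the boundedness of $K$ on $\Omega \times \Omega$: one gets $|\Lambda(\varphi)| \leq C_K\,\|\varphi\|_{\Hcal_K(\Omega)}$ with $C_K = \int_\Omega \sqrt{K(x,x)}\,dx + \frac{1}{N}\sum_n \sqrt{K(y^n, y^n)}$.

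Second, the Riesz representation theorem supplies a unique $e_K(Y) \in \Hcal_K(\Omega)$ such that $\Lambda(\varphi) = \langle \varphi, e_K(Y)\rangle_{\Hcal_K(\Omega)}$ for every $\varphi$. To recover the explicit expression for $e_K(Y)$, I test the identity against the basis elements $K(\cdot, z)$ with $z \in \Omega$: the reproducing property gives $e_K(Y)(z) = \langle K(\cdot, z), e_K(Y)\rangle_{\Hcal_K(\Omega)} = \Lambda(K(\cdot, z)) = \int_\Omega K(x, z)\, dx - \frac{1}{N}\sum_n K(y^n, z)$, which matches the formula for $e_K(Y)$ in the statement. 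Cauchy--Schwarz applied to $\Lambda(\varphi) = \langle \varphi, e_K(Y)\rangle_{\Hcal_K(\Omega)}$ then yields \eqref{EKPV} with $E_K(Y,N,D) = \|e_K(Y)\|_{\Hcal_K(\Omega)}$.

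Third, to recover the quartic identity \eqref{equa:45}, I expand $\|e_K(Y)\|_{\Hcal_K(\Omega)}^2 = \langle e_K(Y), e_K(Y)\rangle$ by bilinearity, using the reproducing identity $\langle K(\cdot, x), K(\cdot, y)\rangle_{\Hcal_K(\Omega)} = K(x,y)$ on each of the four bilinear contributions. Under the normalization $|\Omega| = 1$ implicit in \eqref{eq:20D}, one obtains
$$
\|e_K(Y)\|_{\Hcal_K(\Omega)}^2 = \iint K(x,y)\, dxdy + \frac{1}{N^2}\sum_{n,m}K(y^n, y^m) - \frac{2}{N}\sum_n \int K(y^n, y)\, dy,
$$
where I used symmetry of $K$ to merge the two cross-terms. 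Repackaging the constants $1$ and $|\Omega| = 1$ into averages over $n, m$ reconstitutes the four terms of \eqref{equa:45}.

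The only subtlety worth flagging is the commutation of the inner product with the integral $\int_\Omega K(\cdot, x)\, dx$. Rather than invoking Bochner integration, the Riesz construction above sidesteps this entirely: $e_K(Y)$ is defined abstractly as a Riesz representer, and its pointwise values are then recovered \emph{a posteriori} by pairing with $K(\cdot, z)$. No further regularity on $\varphi$ or $\Omega$ is required beyond what is assumed in Section~\ref{section--2}.
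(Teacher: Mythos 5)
Your proof is correct and follows essentially the same route as the paper's: represent the quadrature error as the inner product $\la \varphi, e_K(Y)\ra_{\Hcal_K(\Omega)}$, apply Cauchy--Schwarz, and expand $\| e_K(Y)\|_{\Hcal_K(\Omega)}^2$ via the reproducing identity to recover \eqref{equa:45}. The one refinement you add --- defining $e_K(Y)$ abstractly by the Riesz representation theorem and recovering its pointwise formula by testing against $K(\cdot,z)$, instead of commuting the vector-valued integral $\int_\Omega K(\cdot,x)\,dx$ with the inner product as the paper does informally --- is a legitimate way to make that step rigorous, and both arguments rely on the same implicit normalization $|\Omega|=1$.
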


We observe that the integrand in \eqref{equa:45} is non-negative and can be also written in terms of the pseudo-distance $D$ (see \eqref{eq:defD}), that is, 
\be 
 E_K(Y,N,D)^2  =
 -\frac{1}{2N^2} \sum_{n,m=1}^N \iint_{\Omega \times \Omega}
\Big( D(x,y) +  D(y^n,y^m) -  D(y^n, y)  -  D(x,y^m) \Big) dxdy.
\ee

\begin{proof} In  view of \eqref{Npsi} and the identity $\varphi(x) = 
\la \varphi, K(\cdot,x) \ra_{\Hcal_K(\Omega)}$, we find 
$$
{1 \over N}  \sum_{1 \leq n \leq N}  \varphi(y^n) 
= {1 \over N}  \sum_{1 \leq n \leq N}  \big\la \varphi, K(\cdot,y^n) \big \ra_{\Hcal_K(\Omega)}
= 
\big\la \varphi, {1 \over N}  \sum_{1 \leq n \leq N}  K(\cdot,y^n) \big \ra_{\Hcal_K(\Omega)}.
$$
In agreement with the statement of the proposition, we write 
$$ 
\aligned
& \int_\Omega \varphi(x) dx  - {1 \over N}  \sum_{1 \leq n \leq N}  \varphi(y^n)
 =  \big\la \varphi, e_K(Y) \big \ra_{\Hcal_K(\Omega)},
\qquad
\quad
e_K(Y)  = \int_\Omega   \Big( K(\cdot,x)   - {1 \over N}  \sum_{1 \leq n \leq N} K(\cdot,y^n) \Big) dx, 
\endaligned
$$
and, by the Cauchy-Schwarz inequality, 
$ |E_Y(\varphi)| 
\leq  
\| \varphi \|_{\Hcal_K(\Omega)}
\|e \|_{\Hcal_K(\Omega)}. 
$
Hence, the integration error splits into two contributions, as expected. By expanding the integrand using \eqref{Npsi} we find the equivalent expression 
$$
\| e_K(Y) \|_{\Hcal_K(\Omega)}^2 = \Big\| \int_\Omega      K(\cdot,x) dx \Big\|^2_{\Hcal_K(\Omega)} + \frac{1}{N^2} \sum_{n,m=1}^N  K(y^n,y^m) -  \frac{2}{N} \sum_{1 \leq n \leq N} \int_\Omega  K(x,y^n) dx 
$$
and, using \eqref{Npsi},  
$\big\| \int_\Omega      K(\cdot,x) dx \big\|^2_{\Hcal_K(\Omega)} =  \int_{\Omega \times \Omega} K(x,y) dxdy$.
Moreover, in view of $K(x,y) = \frac{1}{2} \big( K(x,x) + K(y,y) - D(x,y) \big)$, the derivation of \eqref{equa:45} is completed. 
\end{proof}

 
\subsection{Formulation in spectral variables} 
\label{EFSA}

\begin{proposition}[Minimizing the error function in spectral variables]
\label{MEFSA} 
Consider an admissible kernel $ K: \Omega \times \Omega \to \RR$ together with 
its Mercer representation $(\lambda_i, \zeta_i )_{i \geq 1}$  
Then the error function   
\bel{458-deux}
E_K^{s,p} (D,N) = \inf_{y^1,\ldots,y^N  \in \Omega} \Big( \sum_{i \geq 1} \lambda_i^{sp/2} \Big( \int_\Omega   
 \zeta_i (x)  dx - {1 \over N}  \sum_{1 \leq n \leq N} \zeta_i (y^n)  \Big)^p   \Big)
\ee
(for $s \geq 0$ and $p \in [1, =\infty)$) satisfies 
\bse
\be
E_K^{s,p}  
 \leq \Big( \sum_{i > N} \lambda_i^{sp/2} \Big( \int_\Omega    \zeta_i (x)  dx - {1 \over N}  \sum_{1 \leq n \leq N} \zeta_i (y^n)  \Big)^p   \Big), 
\ee
where $y^1,\ldots,y^N$ solve the system of equations
\bel{CONDSA}
 \int_\Omega    \zeta_i (x)  dx - {1 \over N}  \sum_{1 \leq n \leq N} \zeta_i (y^n)  = 0. 
 \ee
\ese 
\end{proposition}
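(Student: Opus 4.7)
The plan is to expand both sides of the abstract error estimate \eqref{EKPV} in the Mercer eigenbasis $(\lambda_i,\zeta_i)_{i\geq 1}$ of the operator $T_K$, apply H\"older's inequality in order to factor out the regularity of $\varphi$, and finally exhibit a specific point configuration that makes the first $N$ spectral coefficients vanish, leaving only the tail $i>N$.

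First I would fix $\varphi \in \Hcal_K^{s,p}(\Omega)$ and write its Mercer expansion $\varphi = \sum_i c_i \zeta_i$ with $c_i = \la \varphi, \zeta_i \ra_{L^2(\Omega)}$, so that by \eqref{259} one has $\| \varphi \|_{\Hcal_K^{s,p}(\Omega)}^p = \sum_{i \geq 1} \lambda_i^{-ps/2} |c_i|^p$. Setting
\be
a_i(Y) := \int_\Omega \zeta_i(x) \, dx - \frac{1}{N} \sum_{n=1}^N \zeta_i(y^n),
\ee
and exchanging summation with integration and with the finite empirical sum (which is justified by the $L^2$-convergence of the Mercer series together with the continuity of the eigenfunctions $\zeta_i$ granted by the boundedness and continuity of $K$), one obtains the spectral identity
\be
\int_\Omega \varphi(x) \, dx - \frac{1}{N} \sum_{n=1}^N \varphi(y^n) = \sum_{i \geq 1} c_i \, a_i(Y).
\ee
Next I would apply H\"older's inequality to the dual pairing $\sum_i (c_i \lambda_i^{-s/2}) (a_i(Y) \lambda_i^{s/2})$ with the conjugate exponents attached to the norm \eqref{259}; this yields an inequality of the form $\big|\int_\Omega \varphi \, dx - \frac{1}{N} \sum_n \varphi(y^n)\big| \leq \| \varphi \|_{\Hcal_K^{s,p}(\Omega)} \, F_K^{s,p}(Y)$, in which $F_K^{s,p}(Y)$ is a purely geometric discrepancy depending only on $Y$ and on the spectral data. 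Taking the infimum over $Y \in \Omega^N$ identifies this factor with the quantity $E_K^{s,p}(D,N)$ displayed in \eqref{458-deux} and completes the spectral factorization.

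To derive the stated upper bound I would then restrict the infimum to any single configuration $Y^\star = (y^1,\ldots,y^N)$ solving \eqref{CONDSA} for all $i = 1, \ldots, N$: with such a choice, every contribution with $i \leq N$ drops out of the defining sum of $E_K^{s,p}$ and only the tail $\sum_{i > N}$ survives. The hard part, I expect, is precisely the existence of such a $Y^\star$: it amounts to a nonlinear system of $N$ scalar constraints on $ND$ real unknowns in the compact set $\overline{\Omega}^N$, and its solvability is not automatic. My preferred route would be variational: minimize the full discrepancy $\sum_{i \geq 1} \lambda_i^{sp/2} |a_i(Y)|^p$ over $\overline{\Omega}^N$, show that the minimum is attained by compactness, and then exploit the first-order optimality conditions together with the linear independence of the first $N$ eigenfunctions $\zeta_1, \ldots, \zeta_N$ in $L^2(\Omega)$ to argue that the first $N$ constraints must be saturated at any interior minimizer. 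A topological-degree alternative, using continuity of the map $Y \mapsto (a_1(Y), \ldots, a_N(Y))$ together with a separation-of-points property of the first $N$ eigenfunctions, would serve as a fallback.
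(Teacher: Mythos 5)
Your overall route is the same as the paper's: both arguments restrict the infimum in \eqref{458-deux} to a configuration satisfying the moment conditions \eqref{CONDSA}, so that the contributions with $i\le N$ vanish and only the tail $i>N$ survives; the preliminary spectral factorization via the Mercer expansion and H\"older is exactly what the paper does in Proposition~\ref{PEFSA}, so that part is sound and consistent with the text. The paper's own proof is equally terse on the only genuinely delicate point: it introduces the truncated kernel $K_N(x,x')=\sum_{i\le N}\lambda_i\zeta_i(x)\zeta_i(x')$, writes the first-order optimality conditions $\sum_{i\le N}\lambda_i\,a_i(Y)\,(\nabla\zeta_i)(y^n)=0$ for the truncated minimization, and stops there.

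The gap you correctly anticipate is real, and your proposed way of closing it does not work as sketched. The stationarity conditions assert only that, at each point $y^n$, one particular linear combination $\sum_{i\le N}\lambda_i a_i(Y)\nabla\zeta_i(y^n)$ of the $N$ gradient vectors vanishes; linear independence of $\zeta_1,\dots,\zeta_N$ in $L^2(\Omega)$ says nothing about the rank of the $N\times(ND)$ matrix $\big((\nabla\zeta_i)(y^n)\big)$ at the (unknown) minimizer, so one cannot conclude $a_i(Y)=0$ for $i\le N$ without a non-degeneracy hypothesis; moreover a minimizer over the compact set $\overline\Omega^{\,N}$ may lie on the boundary, where the unconstrained first-order conditions need not hold at all. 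Note that the paper itself treats the analogous solvability question for the lattice version \eqref{COND1} as a conjecture, verified only for special configurations (the Cartesian grid on $[0,1]^D$) and numerically otherwise. So the honest status of Proposition~\ref{MEFSA} is conditional on the existence of a solution to \eqref{CONDSA}, which is how the statement is phrased; your proof of that conditional claim is fine, but the existence argument should either be dropped or stated as an assumption rather than presented as provable by the variational/degree arguments you outline.
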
 

\begin{proof}
The expression $K_N(x,x') = \sum_{1 \leq n \leq N} \lambda_n \zeta_n(x)\zeta_n(x')$ defines an admissible kernel, for which we have the following orthogonal decomposition: 
$\Hcal_K(\Omega) = \Hcal_{K_N}(\Omega) \oplus \Hcal_{K_N}^{\perp}(\Omega)$. Consider the minimization problem
$$
E_{K_N}  
 = \inf_{y^1,\ldots,y^N  \in \Omega} \Big( \sum_{1 \leq i \leq N} \lambda_i \Big( \int_\Omega    \zeta_i (x)  dx - {1 \over N}  \sum_{1 \leq n \leq N} \zeta_i (y^n)  \Big)^2   \Big)
$$
and 
$$
\sum_{1 \leq i \leq N} \lambda_i \Big(  \int_\Omega    \zeta_i (x)  dx - {1 \over N}  \sum_{1 \leq n \leq N} \zeta_i (y^n) \Big) (\nabla \zeta_i) (y^n) = 0, \quad n=1,\ldots, N. \qedhere
$$
\end{proof}
 

Whenever the eigenvalues and functions are known explicitly, the spectral decomposition associated with the kernel can be used in combination with the following formula for the error function. Observe that we can now deal with the spaces $\Hcal_K^{s,p}(\Omega)$ with general integrability and differentiability exponents, and recall that that $E_K(Y,N,D) = E_K^{1,2}(Y)$.

\begin{proposition}[Factorization in spectral variables] 
\label{PEFSA}

Consider an admissible kernel $K= K(x,y)$ defined on an open set $\Omega \subset \RD$. Then, for any function $\varphi$ in the corresponding Banach space $\Hcal_K^{s,p}(\Omega)$ associated with exponents $s>0$ and $p \in [1, +\infty)$. Then, for any set $Y= (y^1, \ldots,y^N)$ of points in $\Omega$ and any function $\varphi \in \Hcal_K^{s,p}(\Omega)$, one has
\bel{equa:inequ}
\Big|\int_\Omega  \varphi(x) dx - {1 \over N}  \sum_{1 \leq n \leq N}  \varphi(y^n) \Big| 
\leq 
E_K^{s,p}(Y) \,  \|\varphi\|_{\Hcal_K^{s,p}(\Omega)},
\ee
where the error function (with $1/p+1/p'= 1$) 
\bel{457}
E_K^{s,p'}(Y)  
: = \Big( \sum_{i \geq 1} \lambda_i^{sp'/2} \Big( \int_\Omega    \zeta_i (x)  dx - {1 \over N}  \sum_{1 \leq n \leq N} \zeta_i (y^n)  \Big)^{p'}   \Big)^{1/p'}
= \| e_K(Y) \|_{\Hcal_K^{s,p'}(\Omega)}
\ee
is expressed in terms of the eigenfunctions and eigenvectors $\zeta_i, \lambda_i$ of the Mercer representation of the kernel $K$.   
\end{proposition}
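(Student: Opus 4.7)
The strategy parallels that of Proposition~\ref{PEFPV}, except that Cauchy--Schwarz on the Hilbert space $\Hcal_K(\Omega)$ is replaced by H\"older's inequality on the spectral side. The idea is to diagonalize the quadrature error in the Mercer basis $\{\zeta_i\}_{i\geq 1}$ and then split the resulting sum into a factor that depends only on the regularity of $\varphi$ and a factor that depends only on the quadrature nodes~$Y$.

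First I would introduce $c_i := \langle \varphi, \zeta_i\rangle_{L^2(\Omega)}$ and the elementary quadrature errors
\[
d_i(Y) := \int_\Omega \zeta_i(x)\, dx - \frac{1}{N}\sum_{n=1}^N \zeta_i(y^n),
\]
so that, by the definition~\eqref{259}, $\|\varphi\|_{\Hcal_K^{s,p}(\Omega)}^{p} = \sum_{i} \lambda_i^{-sp/2}|c_i|^{p}$. Using the Mercer expansion $\varphi = \sum_i c_i \zeta_i$ and exchanging the sum with both integration over $\Omega$ and evaluation at the $y^n$, one obtains
\[
\int_\Omega \varphi(x)\, dx - \frac{1}{N}\sum_{n=1}^N \varphi(y^n) = \sum_{i\geq 1} c_i\, d_i(Y).
\]
Rewriting each summand as $(\lambda_i^{-s/2} c_i)(\lambda_i^{s/2} d_i)$ and applying H\"older's inequality with conjugate exponents $p,p'$ yields the announced bound~\eqref{equa:inequ}, with second factor equal to $E_K^{s,p'}(Y)$ by the definition~\eqref{457}.

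For the alternative representation $E_K^{s,p'}(Y) = \|e_K(Y)\|_{\Hcal_K^{s,p'}(\Omega)}$, I would compute the Mercer coefficients of $e_K(Y)$ directly: from the eigenvalue identity $T_K\zeta_i = \lambda_i\zeta_i$ together with Fubini,
\[
\langle e_K(Y), \zeta_i\rangle_{L^2(\Omega)} = \int_\Omega T_K\zeta_i(y)\, dy - \frac{1}{N}\sum_{n=1}^N T_K\zeta_i(y^n) = \lambda_i\, d_i(Y),
\]
which, plugged into the definition of the $\Hcal_K^{s,p'}$-norm, yields the claimed equality (with the Hilbertian case $s=1$, $p=p'=2$ of Proposition~\ref{PEFPV} serving as the guiding instance).

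The main technical obstacle is to justify the two exchanges of infinite summation with (i)~integration over $\Omega$ and (ii)~evaluation at each of the points $y^n$. For~(i), $L^2$-convergence of the Mercer expansion and the uniform bound $|\int_\Omega \zeta_i|\le |\Omega|^{1/2}$ are enough. For~(ii), pointwise evaluation is not controlled by $L^2$-convergence alone; instead I would rely on the embedding $\Hcal_K^{s,p}(\Omega)\hookrightarrow\Hcal_K(\Omega)\hookrightarrow C(\Omega)$ that follows from the continuity and boundedness of~$K$ and from estimate~\eqref{539}, and apply the reproducing kernel identity $\varphi(y^n) = \langle \varphi, K(\cdot,y^n)\rangle_{\Hcal_K}$ to the partial sums in order to secure pointwise convergence at each of the finitely many nodes.
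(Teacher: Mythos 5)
Your proposal is correct and follows essentially the same route as the paper: both arguments reduce the quadrature error to the Mercer-coordinate pairing $\sum_i c_i\, d_i(Y)$, split each term as $(\lambda_i^{-s/2}c_i)(\lambda_i^{s/2}d_i)$ before applying H\"older with exponents $p,p'$, and identify $\langle e_K(Y),\zeta_i\rangle_{L^2(\Omega)}=\lambda_i\, d_i(Y)$ via $T_K\zeta_i=\lambda_i\zeta_i$ to obtain the representation $E_K^{s,p'}(Y)=\|e_K(Y)\|_{\Hcal_K^{s,p'}(\Omega)}$. The only presentational difference is that the paper reaches $\sum_i c_i d_i$ by expanding the pairing $\langle\varphi,e_K\rangle_{\Hcal_K(\Omega)}$ inherited from Proposition~\ref{PEFPV}, whereas you expand $\varphi$ itself in the Mercer basis; these are the same computation. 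One caveat: the embedding $\Hcal_K^{s,p}(\Omega)\hookrightarrow\Hcal_K(\Omega)$ you invoke to justify termwise point evaluation does not hold for all $s>0$ and $p\in[1,+\infty)$ (already for $p=2$ it requires $s\geq 1$, since the weights $\lambda_i^{-s}$ must dominate $\lambda_i^{-1}$), so that justification only covers part of the stated parameter range --- though the paper's own proof passes over this convergence issue entirely, and the a posteriori absolute convergence of $\sum_i c_i d_i$ furnished by the H\"older bound is the honest way to close it.
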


\begin{proof} We consider the Mercer representation 
and use the relations $T_K \zeta_i = \lambda_i \zeta_i$ and with $e_K = \sum_i \la e_K, \zeta_i \ra_{\ell^2(\Omega)} \zeta_i$. 
We have 
$$
\aligned
\la e_K, \zeta_i \ra_{\ell^2(\Omega)} 
& = \int_\Omega  \int_\Omega   \Big( K(y,x) dx  - {1 \over N}  \sum_{1 \leq n \leq N} K(y,y^n)  dx\Big) \zeta_i (y) dy 
  = \lambda_i \, \Big( \int_\Omega    \zeta_i (x)  dx - {1 \over N}  \sum_{1 \leq n \leq N} \zeta_i (y^n)  \Big)
\endaligned
$$
and, by H\"older inequality, we obtain  
$$
\aligned
\big\la \varphi, e_K \big \ra_{\Hcal_K(\Omega)} 
& = \sum_{i=1,2, \ldots}  \lambda_i^{-1} \la \varphi,\zeta_i \ra_{\ell^2} \lambda_i   \Big( \int_\Omega   \zeta_i (x)  dx - {1 \over N}  \sum_{1 \leq n \leq N} \zeta_i (y^n)  \Big) 
\\
 & =  \Big\la \Big( \lambda_i^{-s/2} \la \varphi,\zeta_i \ra_{\ell^2} \Big)_{i \geq 1},  
\lambda_i^{s/2} \Big( \int_\Omega    \zeta_i (x)  dx - {1 \over N}  \sum_{1 \leq n \leq N} \zeta_i (y^n)  \Big)_{i \ge0}  
\Big \ra_{\ell^p(\NN), \ell^{p'}(\NN)} 
\\
& \leq \|\varphi\|_{\Hcal_K^{s,p}(\Omega)} \Big( \sum_i \lambda_i^{sp'/2} \Big( \int_\Omega    \zeta_i (x)  dx - {1 \over N}  \sum_{1 \leq n \leq N} \zeta_i (y^n)  \Big)^{p'}  \Big)^{1/p'}.  \hfill \qedhere
\endaligned 
$$ 
\end{proof} 


\subsection{Formulation in discrete Fourier variables}
\label{EFFV}

Assuming now more structure on the kernel and, specifically, assuming that it is based on a discrete lattice, 
we express the error function in a third form based on the Poisson formula. 
 
\begin{proposition}[Factorization in discrete Fourier variables] 
\label{PEFFV} 
Consider a kernel based on a discrete lattice $\Lbf$  with elementary cell $\Cbf$, say 
$K^\per(x,y):=  {1 \over |\Cbf|} \big\la  e^{2i \pi <x-y, \cdot>}, \rho \big\ra_{\ell^2(\Lbf^*)}$, determined from a generating function $\rho$ defined on the dual lattice $\Lbf^*$. 
Then, for any set $Y= (y^1, \ldots,y^N)$ of points in $\Cbf$ and any $\Lbf$--periodic function 
$\varphi \in \Hcal_{K^\per}^{s,p}(\Omega)$ (for some $s>0$ and $p \in [1, +\infty)$ with $1/p+1/p'= 1$), 
one has \eqref{equa:inequ} with 
\bel{458}
\Big(E_{K^\per}^{s,p}(Y)\Big)^p  
: = \sum_{\alpha^* \in \Lbf^*} \rho^s \Big| \int_\Cbf e^{2i \pi <x,\alpha^*>} \, dx 
      - {1 \over N} \sum_n e^{2i \pi <y^n,\alpha^*>} \Big|^p. 
\ee 
\end{proposition}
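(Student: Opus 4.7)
The plan is to expand the periodic test function in a Fourier series on the elementary cell $\Cbf$, use the lattice structure of the kernel to identify the quadrature error as a linear form on the Fourier side, and then close the estimate by H\"older's inequality with conjugate exponents $(p',p)$. This is in the same spirit as Proposition~\ref{PEFSA}, except that the Mercer basis is now replaced by the explicit exponential basis $\{e^{2i\pi\langle \cdot,\alpha^*\rangle}\}_{\alpha^*\in\Lbf^*}$ that diagonalises $K^\per$.

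First, for any $\Lbf$-periodic function $\varphi$ on $\Cbf$ I write the Fourier series
\[
\varphi(x)=\frac{1}{|\Cbf|}\sum_{\alpha^*\in\Lbf^*}\varphih(\alpha^*)\,e^{2i\pi\langle x,\alpha^*\rangle},
\qquad
\varphih(\alpha^*)=\int_{\Cbf}\varphi(x)\,e^{-2i\pi\langle x,\alpha^*\rangle}\,dx,
\]
and then, by linearity, I would rewrite the quadrature error as
\[
\int_\Cbf \varphi(x)\,dx-\frac{1}{N}\sum_{1\le n\le N}\varphi(y^n)
=\frac{1}{|\Cbf|}\sum_{\alpha^*\in\Lbf^*}\varphih(\alpha^*)\,\Delta(\alpha^*),
\]
where
\[
\Delta(\alpha^*):=\int_\Cbf e^{2i\pi\langle x,\alpha^*\rangle}\,dx-\frac{1}{N}\sum_{1\le n\le N}e^{2i\pi\langle y^n,\alpha^*\rangle}.
\]
Note that $\Delta(0)=|\Cbf|-1=0$ in the normalised setting $|\Cbf|=1$ used throughout the paper, so the zero mode drops out and only nontrivial frequencies $\alpha^*\neq 0$ (where $\rho(\alpha^*)>0$) contribute.

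Next, I would split each term in the Fourier sum as
\[
\varphih(\alpha^*)\,\Delta(\alpha^*)=\bigl(\varphih(\alpha^*)\,\rho(\alpha^*)^{-s/p}\bigr)\,\bigl(\rho(\alpha^*)^{s/p}\,\Delta(\alpha^*)\bigr),
\]
and apply H\"older's inequality on $\Lbf^*$ with conjugate exponents $p'$ and $p$:
\[
\Bigl|\sum_{\alpha^*}\varphih(\alpha^*)\,\Delta(\alpha^*)\Bigr|
\le \bigl\|\varphih\,\rho^{-s/p}\bigr\|_{\ell^{p'}(\Lbf^*)}\;\bigl\|\rho^{s/p}\,\Delta\bigr\|_{\ell^{p}(\Lbf^*)}.
\]
By definition \eqref{58} of the norm on $\Hcal_{K^\per}^{s,p}(\Cbf)$ the first factor is (up to the $|\Cbf|$ prefactor) $\|\varphi\|_{\Hcal_{K^\per}^{s,p}(\Cbf)}$, whereas the second factor is exactly
\[
\Bigl(\sum_{\alpha^*\in\Lbf^*}\rho(\alpha^*)^{s}\,|\Delta(\alpha^*)|^{p}\Bigr)^{1/p}=E_{K^\per}^{s,p}(Y),
\]
by \eqref{458}. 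Combining yields \eqref{equa:inequ}.

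The only genuine issue is bookkeeping rather than mathematics: I need to be careful about the $|\Cbf|$ normalisation used in the $\ell^2(\Lbf^*)$ inner product of Section~\ref{loc-peri} versus the standard discrete Fourier inversion, and to check that the zero-mode cancellation described above absorbs the diagonal term in the reproducing kernel norm consistently with the definition of $\Hcal_{K^\per}^{s,p}$ in \eqref{58}. Once this normalisation is fixed the estimate follows directly from the elementary Parseval/H\"older argument above; convergence of the Fourier sum is not an obstacle because $\varphi\in\Hcal_{K^\per}^{s,p}\subset\Hcal_{K^\per}$ forces $\varphih\,\rho^{-1/2}\in\ell^2(\Lbf^*)$ by Proposition~\ref{PKFT}, which together with $\rho^{1/2}\in\ell^2(\Lbf^*)$ gives absolute convergence of $\sum \varphih(\alpha^*)\,\Delta(\alpha^*)$.
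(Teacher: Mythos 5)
Your proposal is correct and follows essentially the same route as the paper: the paper likewise writes the quadrature error as the $\ell^2(\Lbf^*)$ pairing of $\varphih$ with the Fourier transform $\widehat e(\alpha^*)=\int_\Cbf e^{2i\pi\langle x,\alpha^*\rangle}dx-\frac1N\sum_n e^{2i\pi\langle y^n,\alpha^*\rangle}$ of the error measure $1_\Cbf-\frac1N\sum_n\delta_{y^n}$, inserts the weights $\rho^{\pm s/p}$, and concludes by H\"older with exponents $(p',p)$ against the norm from \eqref{58}. Your additional remarks on the vanishing of the zero mode, the $|\Cbf|$ normalisation, and absolute convergence of the Fourier sum are consistent with (and slightly more careful than) the paper's argument.
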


\begin{proof} Provided $ \supp (\varphi)\subset \Cbf$, we have 
$$
\aligned
 	\int_\Cbf \varphi(x) \, dx - {1 \over N} \sum_n \varphi(y^n) 
&=  \big\la  \int_\Cbf e^{2i \pi <x,\cdot>} \, dx - {1 \over N} \sum_n e^{2i \pi <y^n,\cdot>}, \varphih  \big \ra_{\ell^2(\Lbf^*)}
:=  \big\la  \widehat{e}, \varphih  \big \ra_{\ell^2(\Lbf^*)}
\\
& = \big\la  \frac{\varphih}{\rho^{s/p}}, \rho^{s/p}\widehat{e}  \big \ra_{\ell^2(\Lbf^*)}, 
\endaligned
$$
where the error function $\widehat{e}$ is defined as the Fourier transform of $e:= 1_\Cbf   - {1 \over N}  \sum_{1 \leq n \leq N} \delta_{y^n}$, hence 
$$
\Big| \int_\Cbf \varphi(x) \, dx - {1 \over N} \sum_n \varphi(y^n) \Big| 
\leq  \Big\| 
 \rho^{s/p} 
\big( \int_\Cbf e^{2i \pi <x,\cdot>} \, dx - {1 \over N} \sum_n e^{2i \pi <y^n,\cdot>} 
\big) 
\Big\|_{\ell^p (\Lbf^*)} \| \varphi \|_{\Hcal_{K^\per}^{s,p}(\Cbf)}. 
\hfill \qedhere
$$ 
\end{proof}


\section{Controlling the discrepancy function}
\label{sec-opti}

\subsection{Physical variables}


We seek for a set of points minimizing the error function and, first in the physical variables, we obtain the following results. 

\begin{proposition}[Minimizing the error function in physical variables]
\label{MEFPV}
Let $K: \Omega \times \Omega \to \RR$ be an admissible kernel defined on some open set $\Omega$, and consider the error function \eqref{EKPV}.
Provided the kernel is convex with respect to each variable, 
the functional $Y = (y^1, \ldots,y^N) \mapsto (E_K(Y,N,D))^2$ is a positive (non-strictly) convex with
positive infimum
$$
E_K(N,D):= \inf_{Y=(y^1, \ldots,y^N)} E_K(Y,N,D) > 0.
$$
Moreover, if $Y$  be a minimizer, then the gradient of the functional vanishes at $Y$, namely 
\bel{291}
\int_\Omega \nabla  K(x,y^n) dx =  {1 \over N}  \sum_{m= 1}^N  (\nabla K)(y^n,y^m), 
\qquad n=1, 2, \ldots, N. 
\ee
Furthermore, for all functions $\Hcal_K^Y(\Omega)$ (as defined in \eqref{equa:HKY}), 
the following integration formula holds (without error term): 
\bel{529}
\int_\Omega  \varphi(x) dx = {1 \over N}  \sum_{1 \leq n \leq N}  \varphi(y^n),
\qquad   \varphi \in \Hcal_K^Y(\Omega). 
\ee
\end{proposition}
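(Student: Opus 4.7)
The plan is to expand the squared error function via the reproducing property and analyse the resulting expression term by term. Starting from $E_K(Y,N,D)^2 = \|e_K(Y)\|^2_{\Hcal_K(\Omega)}$ established in Proposition 6.1, I would expand using the reproducing kernel identity to obtain
\[
E_K(Y,N,D)^2 \;=\; \iint_{\Omega \times \Omega} K(x,y)\,dxdy \;-\; \frac{2}{N}\sum_{n=1}^N \int_\Omega K(y^n,x)\,dx \;+\; \frac{1}{N^2}\sum_{n,m=1}^N K(y^n, y^m),
\]
in which the first term is constant in $Y$. Positivity is then automatic as a squared Hilbert norm, and strict positivity of the infimum follows by a compactness-and-contradiction argument on $\Omegab^N$: admissibility of $K$ prevents $A = \int_\Omega K(\cdot,x)\,dx$ from being exactly realised as the uniform average of finitely many basis functions $K(\cdot,y^n)$.

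For convexity, I would analyse each contribution as a function of $Y \in \Omega^N$. Under the assumption that $K$ is convex with respect to each variable, the double sum $\sum_{n,m} K(y^n,y^m)$ is separately convex in every $y^k$, and the interaction between the single and double sums must be examined carefully so that the overall functional is convex (possibly non-strictly, reflecting non-uniqueness of minimisers). Existence of a minimiser then follows from lower semi-continuity and compactness of $\Omegab^N$.

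The first-order optimality condition \eqref{291} is obtained by differentiating the explicit expression above with respect to each $y^n$. The middle term contributes $-\tfrac{2}{N}\int_\Omega \nabla K(y^n,x)\,dx$, while the symmetry $K(x,y) = K(y,x)$ allows the $(n,m)$ and $(m,n)$ cross terms in the double sum to combine into $\tfrac{2}{N^2}\sum_m \nabla K(y^n,y^m)$; setting the total gradient to zero gives the stated relation.

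The last and most delicate step is the no-error integration formula \eqref{529}. For $\varphi = \sum_m a_m K(\cdot, y^m) \in \Hcal_K^Y(\Omega)$, using the bilinear structure and the reproducing property one reduces the claim to
\[
\int_\Omega \varphi(x)\,dx - \frac{1}{N}\sum_n \varphi(y^n) \;=\; \la \varphi, e_K(Y)\ra_{\Hcal_K(\Omega)},
\]
so the formula is equivalent to the orthogonality $e_K(Y) \perp \Hcal_K^Y(\Omega)$, that is, $e_K(Y)(y^m) = 0$ for every $m$. I expect this to be the main obstacle: the Euler--Lagrange condition derived above only guarantees that $\nabla e_K(Y)$ vanishes at each $y^n$, which is strictly weaker than the vanishing of $e_K(Y)$ itself at those points. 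Bridging this gap plausibly requires exploiting the convexity hypothesis more strongly, or reinterpreting the optimisation as a joint minimisation over both the positions $y^n$ and admissible weights $w_n$, so that the desired orthogonality emerges as the first-order condition with respect to the coefficients and happens to be realised at the uniform weights $w_n = 1/N$.
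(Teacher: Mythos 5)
Your expansion of $E_K(Y,N,D)^2$ into the constant term $\iint_{\Omega\times\Omega}K\,dx\,dy$, the cross term and the double sum, the positivity discussion, and the derivation of the first--order condition \eqref{291} by differentiating that explicit expression all coincide with what the paper does. On convexity neither you nor the paper supplies a proof: you note that the interaction between the concave contribution $-\tfrac{2}{N}\sum_n\int_\Omega K(y^n,x)\,dx$ and the double sum ``must be examined carefully,'' and the paper merely writes down the Hessian \eqref{eq:912} without verifying its sign, so you have not missed an argument that the paper actually contains.

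The substantive point is \eqref{529}, and your diagnosis of the obstacle is exactly right: the formula is equivalent to $e_K(Y)(y^m)=0$ for every $m$, whereas the Euler--Lagrange condition \eqref{291} only gives $\nabla e_K(Y)(y^m)=0$, which is strictly weaker. The paper's route, which you did not reproduce, is the orthogonal decomposition $\|e\|_{\Hcal_K(\Omega)}^2=\|e_Y\|^2+\|e-e_Y\|^2$, where $e_Y$ is the projection of $e:=e_K(Y)$ onto $\Hcal_K^Y(\Omega)$, together with the identity $\|e_Y\|^2=\sum_{m,n}K(y^n,y^m)\bigl(\int_\Omega\theta_Y^n\,dx-\tfrac1N\bigr)\bigl(\int_\Omega\theta_Y^m\,dx-\tfrac1N\bigr)$, which shows that \eqref{529} is equivalent to $\int_\Omega\theta_Y^n\,dx=\tfrac1N$ for all $n$. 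However, the paper only \emph{asserts} that a minimizer annihilates $\|e_Y\|^2$; since $\|e-e_Y\|^2$ also depends on $Y$, minimizing the sum does not force the first summand to vanish, so the gap you identified is present in the paper's own proof. It is in fact a genuine one: for the kernel \eqref{ODK} with $N=1$ one finds $E_K(y)^2=\tfrac1{12}-y(1-y)+y(1-y)=\tfrac1{12}$ for every interior $y$, so every point is a minimizer, yet $\int_0^1K(y,x)\,dx=\tfrac12y(1-y)\neq K(y,y)=y(1-y)$, so \eqref{529} fails there. Your proposed fix of optimizing jointly over positions and weights $w_n$ would indeed produce the orthogonality $e_K\perp\Hcal_K^Y(\Omega)$ as the first--order condition in $w$, but it proves a modified statement (optimal weights need not equal $1/N$); closing the step as stated requires an extra hypothesis on the kernel, such as the translation invariance enjoyed by the periodic kernels of Proposition \ref{PEFFV-99}, for which $\int_\Omega K(\cdot,x)\,dx$ is constant.
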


Here, $\nabla K(x,y)$ stands here for any of $\nabla_x K(x,y)$ or $\nabla_y K(x,y)$, since the kernel $K$ is symmetric in its two arguments. Before proceeding with the proof, the following comments are in order: 
\bei

\item The functional is be {\sl totally symmetric} with respect to its arguments so, clearly, a minimizer is never unique. Yet, suppose that our kernel $K(x,y)$ is concave, then the following semi-discrete algorithm
\bel{292}
\frac{d}{dt} y^n =  \int_\Omega \nabla  K(x,y^n) dx - {1 \over N}  \sum_{m, n = 1}^N  (\nabla K)(y^n,y^m), 
\qquad n=1, 2, \ldots, N
\ee
converges toward a minimum of the functional $E_K$. Observe also that the dynamical system under consideration involves two terms:  $\nabla K(y^n,y^m)$ tends to push points away from each other, while the  integral term tends to attract the points toward the mass-center of $\Omega$. These two competitive effects leads to a non-trivial distribution of the points. 

\item With  the partition of unity defined in \eqref{410}, any minimizer must satisfy 
$
\int_\Omega  \delta^n(x) dx = {1 \over N}, 
$
for 
$
n=1, 2, \ldots, N. 
$
\eei

\begin{proof}
Let us set $e:= {1 \over N}  \sum_{1 \leq n \leq N} \int_\Omega   \Big( K(\cdot,x) dx  - K(\cdot,y^n) \Big) dx$. We have denoted by $e_Y:= \sum_n e_Y(y^n) \delta_{Y}^n $ its projection on the approximation space $\Hcal_K^Y(\Omega)$. It satisfies 
$$
E_K(Y,N,D)^2 = \|e\|_{\Hcal_K(\Omega)}^2 = \|e_Y \|_{\Hcal_K^Y(\Omega)}^2 + \|e - e_Y\|_{\Hcal_K}^2.
$$
Thus $E_K(Y,N,D)^2 \geq  \|e - e_Y\|_{\Hcal_K}^2 > 0$, and, as we are going to check, a minimizer makes the term 
$\|e_Y \|_{\Hcal_K^Y(\Omega)}^2$ to vanish.

In view of the definition \eqref{457} 
and the symmetry property $\del_{y^n}K(y^n,y^m) = \del_{y^n}K(y^m,y^n)$, we obtain 
($n \in[1, 2, \ldots, N]$, $d \in [1, \ldots, D]$) 
$$
\del_{y^n_d}(E_K(Y,N,D))^2 =  \frac{2}{N^2} \sum_{n}   \del_{y^n_d} K(y^n,y^k)  -  \frac{2}{N} \int_\Omega   \del_{y^n_d} K(x,y^n) dx, 
$$
establishing \eqref{291}. By computing second-order derivatives, we see that Hessian $\del_{y^n_d} \del_{y^n_e}  (E_K(Y,N,D))^2$ reads as follows ($d,e \in \big\{1, \ldots, D \big\}$): 
\bel{eq:912}
		 \frac{2}{N^2} \sum_{m, n} \del_{y^n_d} \del_{y^n_e} K(y^n,y^m) -  \frac{2}{N} \int_\Omega   \del_{y^n_d} \del_{y^n_e} K(x,y^n) dx.
\ee
Moreover, recalling \eqref{410}, we compute
$$
\aligned
 e_Y  & = \sum_n e_Y(y^n) \delta_{Y}^n  = \big\la \int_\Omega   K(Y,Y)(x) dx, \delta_{Y} \big \ra_{\RN}  - {1 \over N}  \sum_{n} K(\cdot, y^n) 
 = \big\la \int_\Omega   \delta_Y(x) dx, K_{Y} \big \ra_{\RN}  - {1 \over N}  \sum_{n} K(\cdot, y^n).
\endaligned
$$
Hence, we arrive at
$$
\aligned
\|e_Y \|_{\Hcal_K(\Omega)}^2 
& = \big\la  \sum_{n} \Big( \int_\Omega  \delta^n(x) dx - {1 \over N}  \Big)  \delta_{y^n},  \big\la \int_\Omega   \delta_Y(x) dx, K_{Y} \big \ra_{\RN}  - {1 \over N}  \sum_{n} K(\cdot, y^n)  \big \ra_{\Dcal',\Dcal} 
\\
& = \sum_{m,n} K(y^n,y^m) \Big( \int_\Omega  \delta^n(x) dx - {1 \over N}  \Big) \Big( \int_\Omega  \delta^m(x) dx - {1 \over N}  \Big) 
  \geq C \| \Big(\int_\Omega  \delta^n(x) dx - {1 \over N}  \Big)_n\|_{\RN}^2.
\endaligned
$$
In particular, the minimum vanishes when $ \int_\Omega  \delta^n(x) dx = {1 \over N} $, and this establishes \eqref{529}. Hence, suppose that this relation is true, then from  \eqref{410} we deduce
$$
\frac{1_N}{N}= \int_\Omega \delta_Y(x) \, dx = K(Y,Y)^{-1} \int_\Omega K(Y,Y)(x) \, dx,
$$
implying  
$K(Y,Y) \frac{1_N}{N} = \int_\Omega K(Y,Y)(x) \, dx$
or, equivalently, $K(Y,Y)^m$ is a stochastic matrix, which is \eqref{529}.
\end{proof} 


\vskip.05cm \par{\bf Best discrepancy sequences in one dimension.} 
We are now in a position to analyze an example in dimension $D=1$, where we have fully explicit expressions. 

\begin{proposition}[The case of dimension $D=1$] 
\label{TCD1}
For the kernel  \eqref{KERNEL--ODK}, the equally-spaced sequence 
\bel{eq:eqlaseq} 
y^n = \frac{2 n-1}{2N}, \qquad n=1, 2, \ldots, N
\ee
 is the unique solution (up to re-ordering) that minimizes the error function $Y \mapsto E_K(Y)$ introduced in \eqref{EKPV} and, with this sequence, the following estimate holds: 
$$
	\Big|\int_{[0,1]} \varphi(x) dx - {1 \over N}  \sum_{1 \leq n \leq N}  \varphi(y^n) \Big| 
\leq \frac{1}{\sqrt{6}N} \, \| \varphi\|_{\Hcal_K^{1,2}([0,1])}, 
\qquad 
 \varphi \in \Hcal_K^{1,2}([0,1]).
$$
\end{proposition}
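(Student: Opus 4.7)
The plan is to reduce this $D=1$ statement to a one-dimensional quadrature estimate via the Green-function interpretation of $K$, compute the resulting $L^2$-norm explicitly for the midpoint sequence, and verify optimality via the first-order conditions of Proposition~\ref{MEFPV}.

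First I would identify $\Hcal_K^{1,2}([0,1])$ with $H_0^1([0,1])$: for the kernel \eqref{ODK}, the $L^2$-orthonormalized Mercer basis is $\sqrt{2}\sin(i\pi x)$ with eigenvalues $\lambda_i = 1/(i\pi)^2$, so a Parseval computation starting from \eqref{259} yields $\|\varphi\|_{\Hcal_K^{1,2}}^2 = \sum_i (i\pi)^2 c_i^2 = \|\varphi'\|_{L^2}^2$ for $\varphi = \sum c_i \sqrt{2}\sin(i\pi\cdot)$. So the claim reduces to $|\int_0^1 \varphi - \tfrac{1}{N}\sum \varphi(y^n)| \le \tfrac{1}{\sqrt{6}\,N}\|\varphi'\|_{L^2}$ for $\varphi \in H_0^1([0,1])$ and $y^n = (2n-1)/(2N)$.

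Second, I would invoke Proposition~\ref{PEFPV} to write $E_K(Y)^2 = \|e_K(Y)\|_{\Hcal_K}^2 = \|e_K'\|_{L^2}^2$, where $e_K(Y) := \int_0^1 K(\cdot,x)\,dx - \tfrac{1}{N}\sum_n K(\cdot,y^n)$ lies in $H_0^1$ thanks to the vanishing of $K$ on $\partial[0,1]$. Using $\partial_z K(z,y) = \mathbf{1}_{z<y} - y$ and $\int_0^1 K(z,y)\,dy = \tfrac{1}{2}z(1-z)$ from \eqref{K1DC}, one obtains
\[
e_K'(t) = W(t) := \tfrac{1}{2} + \bar y - t - F(t), \qquad F(t):=\tfrac{1}{N}|\{n:y^n>t\}|,\ \ \bar y := \tfrac{1}{N}\sum_n y^n.
\]
For the midpoint sequence one has $\bar y = 1/2$, so $W(t) = 1-t-F(t)$ is piecewise-linear of slope $-1$ with jumps $+1/N$ at each $y^n$. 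Integrating $W^2$ piecewise over the $N+1$ sub-intervals delimited by $0, y^1, \ldots, y^N, 1$ (the two end-pieces of length $1/(2N)$ contributing $1/(24N^3)$ each, and the $N-1$ middle pieces of length $1/N$, symmetric about their midpoints, contributing $1/(12N^3)$ each) gives $\|W\|_{L^2}^2 = 1/(12N^2)$. Hence $E_K(Y) = 1/(2\sqrt{3}\,N) \le 1/(\sqrt{6}\,N)$, which yields the stated inequality.

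Third, for minimality I would verify the first-order conditions \eqref{291} of Proposition~\ref{MEFPV}: with $K_1(x,y):=\mathbf{1}_{x<y}-y$ and the symmetric value $K_1(y,y)=\tfrac{1}{2}-y$ at the kink, one has $\int_0^1 K_1(x,y^n)\,dx = \tfrac{1}{2}-y^n$, and a direct count of terms shows that this equals $\tfrac{1}{N}\sum_m K_1(y^n,y^m)$ when $y^n = (2n-1)/(2N)$; convexity of $E_K^2$ on the ordered chamber $\{y^1<\cdots<y^N\}$ (apparent from the piecewise-cubic formula for $\|W\|_{L^2}^2$) then promotes this critical point to a global minimum. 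The main obstacle is \emph{uniqueness}: the interior Euler--Lagrange conditions are invariant under translations $y^n \mapsto y^n+c$ for $c \in [-1/(2N), 1/(2N)]$, all of which yield the same error $\|W\|_{L^2}^2 = 1/(12N^2)$, so isolating $y^n = (2n-1)/(2N)$ as the (unordered) minimizer requires exploiting the $x \mapsto 1-x$ symmetry of $K$, or equivalently a careful boundary analysis using $K(0,\cdot) = K(1,\cdot) = 0$ to break the translation degeneracy.
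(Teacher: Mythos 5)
Your route is genuinely different from the paper's and, for the quantitative part, it is the better one. The paper expands $E_K(Y)^2$ directly as an explicit polynomial in the $y^n$ using the formula for $\Kseed$ and then evaluates it at the midpoint sequence; your reduction to $\|e_K'\|_{L^2}^2$ via the Green-function identity $\la f,g\ra_{\Hcal_K}=\int_0^1 f'g'$ and the piecewise-linear profile $W(t)=1-t-F(t)$ is cleaner and, importantly, gives the \emph{correct} value $\|W\|_{L^2}^2=\tfrac{1}{12N^2}$, hence $E_K(Y)=\tfrac{1}{2\sqrt{3}N}\le\tfrac{1}{\sqrt{6}N}$. A direct check at $N=1$, $y^1=\tfrac12$, gives $E_K(Y)^2=\tfrac1{12}+K(\tfrac12,\tfrac12)-2\int_0^1K(x,\tfrac12)\,dx=\tfrac1{12}$, confirming $\tfrac{1}{12N^2}$; the paper's own expansion contains sign slips and its announced value $\tfrac{1}{6N^2}$ is off by a factor of $2$, so the constant $\tfrac{1}{\sqrt6 N}$ in the statement is valid but not sharp, and your argument actually proves more. (One small index slip in your third step: the gradient in \eqref{291} acts on the $y$-slot, so the relevant derivative is $\del_2\Kseed(x,y)=\mathbf 1_{y<x}-x$ from \eqref{K1DC}, whose integral in $x$ is indeed $\tfrac12-y$; with your $K_1(x,y)=\mathbf 1_{x<y}-y$ the integral would be $0$.)

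The obstacle you flag at the end is real and cannot be circumvented: the uniqueness claim is false as stated, so no boundary or symmetry argument will rescue it. On each ordered chamber the functional equals a constant plus a linear term plus $\tfrac1N\sum_n(y^n)^2-\tfrac1{N^2}\big(\sum_n y^n\big)^2$, whose Hessian $\tfrac2N\big(I-\tfrac1N\mathbf 1\mathbf 1^T\big)$ annihilates $\mathbf 1$, and the linear part also vanishes in the direction $\mathbf 1$; the invariance under $y^n\mapsto y^n+c$ is therefore exact, not merely an artifact of the interior Euler--Lagrange equations. Concretely, for $N=2$ the translate $(0.45,0.95)$ of the midpoint pair again gives $E_K(Y)^2=\tfrac1{48}$, and for $N=1$ the functional is constant in $y^1$. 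The correct statement is that the global minimizers are exactly the translates $y^n=\tfrac{2n-1}{2N}+c$ with $c\in[-\tfrac1{2N},\tfrac1{2N}]$, up to reordering; your strict-convexity-modulo-translation observation is exactly what proves this. Apart from this defect, which lies in the statement rather than in your argument, your proof of the displayed inequality is complete and correct.
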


\begin{proof}  We begin with the case $s=$ and $p=2$, and consider an arbitrary (ordered) sequence of points, say $0<y^1 < \ldots<y^N<1$. We rewrite our functional \eqref{equa:45} as
$$
\aligned
(E_K(Y))^2 
= & \int_{([0,1] )^2} K(x,y) dxdy - {1 \over N} \sum_{1 \leq n \leq N} (y^n - 1)y^n
  + \frac{1}{N^2} \sum_{n > m,n= 1, \ldots, N} y^m (y^n-1) 
\\
& \quad + \frac{1}{N^2} \sum_{n \leq m,n= 1, \ldots, N} ( y^m - 1) y^n 
\\
= & 1/12  - {1 \over N} \sum_{1 \leq n \leq N} (y^n - 1)y^n + \frac{1}{N^2} \Big( \sum_{1 \leq n \leq N}  y^n\Big)^2
  - \frac{1}{N^2}  \sum_{n, m = 1}^N  y^n \big( 1_{m>n} + 1_{n \leq m} \big)
\endaligned
$$
or, equivalently, 
$$
 (E_K(Y))^2 = 1/12  - {1 \over N} \sum_{1 \leq n \leq N} (y^n)^2 + \frac{1}{N^2} \Big( \sum_{1 \leq n \leq N}  y^n\Big)^2 +\frac{1}{N^2} \sum_{1 \leq n \leq N}  y^n (2n - N - 1). 
$$
In view of Proposition~\ref{MEFPV}, any minimizer satisfies the system of equations
($n=1, 2, \ldots, N$) 
$$
\del_{y^n}(E_K(Y))^2 = -  \frac{2}{N}y^n + \frac{2}{N^2} \sum_{1 \leq n \leq N}  y^n + \frac{1}{N^2} (2n - N - 1) = 0, 
$$
which we put in the form
$
{1 \over N}  \sum_{1 \leq n \leq N}  y^n - \frac{1}{2}+ {1 \over N}  (n - 1 / 2) = y^n.
$
This is a linear algebraic system, which is readily solved explicitly; this leads us to \eqref{eq:eqlaseq}. 
We check immediately that 
${1 \over N}  \sum_{1 \leq n \leq N}  y^n = {1 \over 2}$
and 
$
\sum_{1 \leq n \leq N}  (y^n)^2= {N \over 3} - \frac{1}{12 N}$,  
and, finally, we evaluate the functional to be 
$$
(E_K(Y))^2 
 = {1 \over 12} + \Big( {1 \over 3} - {1 \over 12 N^2} \Big)  - \frac{1}{4} -\frac{2}{N^2} \sum_{1 \leq n \leq N}  \frac{(n-1/2)^2}{N} + \frac{2}{N^2} \sum_{1 \leq n \leq N}  \frac{n-1/2}{2} = \frac{1}{6 N^2}. 
\qedhere
$$
\end{proof}


\subsection{Spectral variables} 

For clarity in the presentation, let us repeat here our previous observation. 

\begin{proposition}[Minimizing the error function in spectral variables]
\label{MEFSA-9} 
Consider an admissible kernel $K: \Omega \times \Omega \to \RR$ and denote by $(\lambda_i, \zeta_i )_{i \geq 1}$ its Mercer representation. 
Then for the error function \eqref{457}
\bel{458-deux-0}
E_K^{s,p}  
: = \inf_{y^1,\ldots,y^N  \in \Omega} \Big( \sum_{i \geq 1} \lambda_i^{sp/2} \Big( \int_\Omega   
 \zeta_i (x)  dx - {1 \over N}  \sum_{1 \leq n \leq N} \zeta_i (y^n)  \Big)^p   \Big), 
\ee
one has
\bse
\bel{EKSP-9}
E_K^{s,p}  
 \leq \Big( \sum_{i > N} \lambda_i^{sp/2} \Big( \int_\Omega    \zeta_i (x)  dx - {1 \over N}  \sum_{1 \leq n \leq N} \zeta_i (y^n)  \Big)^p   \Big), 
\ee
where $(y^1,\ldots,y^N) \in \RR^{N\times D}$ is given by the following system of equations
\bel{CONDSA-9}
 \int_\Omega    \zeta_i (x)  dx - {1 \over N}  \sum_{1 \leq n \leq N} \zeta_i (y^n)  = 0. 
 \ee
\ese 
\end{proposition}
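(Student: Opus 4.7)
The plan is to bound the infimum in \eqref{458-deux-0} from above by evaluating the functional inside the infimum at a judiciously chosen $N$-tuple of quadrature points. First I would invoke the very definition of the infimum: for \emph{any} choice of points $(y^1,\ldots,y^N) \in \Omega^N$ one has
$$E_K^{s,p} \le \sum_{i\ge 1} \lambda_i^{sp/2} \Big( \int_\Omega \zeta_i(x)\,dx - \frac{1}{N} \sum_{1 \le n \le N} \zeta_i(y^n) \Big)^p.$$
The essential move is then to split the series on the right into the low-frequency block $1 \le i \le N$ (the $N$ dominant modes) and its tail $i > N$: if the points $(y^1,\ldots,y^N)$ are chosen so as to verify the $N$ scalar identities \eqref{CONDSA-9} for $i=1,\ldots,N$, each summand in the low-frequency block vanishes, and only the tail survives. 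This is exactly the bound \eqref{EKSP-9}.

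It thus remains to establish the existence of points $(y^1,\ldots,y^N) \in \Omega^N$ satisfying \eqref{CONDSA-9}. Here I would rely on a degree-counting argument: the system consists of $N$ scalar equations, whereas the unknowns $(y^1,\ldots,y^N) \in \Omega^N \subset \RR^{ND}$ provide $ND$ real parameters, so the system is heavily underdetermined as soon as $D \ge 1$ and one expects the solution set to be a nonempty manifold of codimension $N$. A rigorous existence statement can be obtained either (i) by induction on $N$, starting from $N=1$ where the intermediate value theorem applied to $y \mapsto \zeta_1(y) - \int_\Omega \zeta_1(x)\,dx$ yields at least one solution (since the continuous function $\zeta_1$ attains its $L^2$-orthogonal-projection average on the connected set $\Omega$), and propagating by the implicit function theorem; or (ii) by a topological fixed-point argument applied to the residual map $F:\Omega^N \to \RR^N$ whose components are the left-hand sides of \eqref{CONDSA-9}, using the continuity and boundedness of the eigenfunctions $\zeta_i$.

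The main obstacle is precisely this existence step: the eigenfunctions $\zeta_i$ are only implicitly defined through $T_K \zeta_i = \lambda_i \zeta_i$, and the system \eqref{CONDSA-9} is nonlinear in the $y^n$, so no explicit formula is available in general. Once existence is taken for granted, the proof reduces to the elementary split-and-discard argument described above. I would emphasize that the result only produces an \emph{upper} bound on $E_K^{s,p}$ and does not identify an actual minimizer: whether the configuration of points that annihilates the first $N$ modes is itself optimal is an independent question that \eqref{EKSP-9} does not address.
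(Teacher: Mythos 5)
Your proposal follows essentially the same route as the paper: bound the infimum by evaluating at points that annihilate the first $N$ spectral modes, so that only the tail $i>N$ survives (the paper phrases this via the truncated kernel $K_N(x,y)=\sum_{1\le n\le N}\lambda_n\zeta_n(x)\zeta_n(y)$ and the orthogonal splitting $\Hcal_K(\Omega)=\Hcal_{K_N}(\Omega)\oplus\Hcal_{K_N}^{\perp}(\Omega)$, but the substance is identical). You are in fact more candid than the paper about the one genuine gap, namely the existence of points solving \eqref{CONDSA-9}: the paper's proof stops at the first-order optimality conditions and never establishes existence either, only conjecturing it in the analogous discrete-Fourier setting and verifying it there for the canonical lattice.
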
 

\begin{proof}
The expression $K_N(x,y) = \sum_{1 \leq n \leq N} \lambda_n \zeta_n(x)\zeta_n(y)$ defines an admissible kernel, for which we have an orthogonal decomposition of the form 
$\Hcal_K(\Omega) = \Hcal_{K_N}(\Omega) \oplus \Hcal_{K_N}^{\perp}(\Omega)$. It suffices to consider the minimization problem
$$
E_{K_N}  
 = \inf_{y^1,\ldots,y^N  \in \Omega} \Big( \sum_{1 \leq i \leq N} \lambda_i \Big( \int_\Omega    \zeta_i (x)  dx - {1 \over N}  \sum_{1 \leq n \leq N} \zeta_i (y^n)  \Big)^2   \Big)
$$
and 
$$
\sum_{1 \leq i \leq N} \lambda_i \Big(  \int_\Omega    \zeta_i (x)  dx - {1 \over N}  \sum_{1 \leq n \leq N} \zeta_i (y^n) \Big) (\nabla \zeta_i) (y^n) = 0, \qquad n=1,\ldots, N. \qedhere
$$
\end{proof}


\vskip.05cm \par{\bf Best discrepancy sequences in one dimension.} 
We can now revisit the example in dimension $D=1$ treated in Proposition~\ref{TCD1}.

\begin{proposition}[The case of dimension $D=1$.]
For the kernel  \eqref{KERNEL--ODK}, and the equally-spaced sequence 
\bel{eq:eqlaseq-9} 
y^n = \frac{2 n-1}{2N}, \qquad n=1, 2, \ldots, N,
\ee
the error function $Y \mapsto E_K^{s,p}(Y)$ introduced in \eqref{457} can be bounded by, for $s > 1- 1/p$ and some constant $C_{s,p}>0$: 
$$
	\Big|\int_{[0,1]} \varphi(x) dx - {1 \over N}  \sum_{1 \leq n \leq N}  \varphi(y^n) \Big| 
\leq \frac{C_{s,p}}{N^s} \, \| \varphi\|_{\Hcal_K^{s,p}([0,1])}, 
\qquad 
 \varphi \in \Hcal_K^{s,p}([0,1]).
$$
\end{proposition}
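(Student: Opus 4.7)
The plan is to invoke Proposition~\ref{PEFSA} with the explicit Mercer data of the kernel \eqref{KERNEL--ODK}, namely the normalized eigenfunctions $\zeta_j(x) = \sqrt{2}\sin(j\pi x)$ and eigenvalues $\lambda_j = 1/(j\pi)^2$. The H\"older step in the proof of that proposition reduces everything to controlling the weighted sum
\[
\bigl(E_K^{s,p'}(Y)\bigr)^{p'} \;=\; \sum_{j\ge 1} \lambda_j^{sp'/2}\,|a_j|^{p'},
\qquad
a_j \;:=\; \int_0^1 \zeta_j(x)\,dx - \frac{1}{N}\sum_{n=1}^N \zeta_j(y^n),
\]
with $p' = p/(p-1)$ and $y^n = (2n-1)/(2N)$; the goal is to show this sum is at most $C_{s,p}\,N^{-sp'}$.

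Using the identity $\sum_{n=1}^N \sin(\theta(n-\tfrac12)) = \sin^2(N\theta/2)/\sin(\theta/2)$ with $\theta = j\pi/N$, one finds $a_j = 0$ for even $j$ and, up to the $\sqrt{2}$ factor,
\[
a_j \;=\; \frac{2}{j\pi} - \frac{1}{N\sin(j\pi/(2N))}, \qquad j\ \text{odd}.
\]
Two complementary bounds are the key. Writing $u = j\pi/(2N)$ and rearranging as $a_j = (\sin u - u)/(N u \sin u)$, the elementary inequalities $|\sin u - u| \le u^3/6$ and $\sin u \ge 2u/\pi$ on $(0,\pi/2]$ yield the \emph{low-frequency estimate} $|a_j| \le C\,j/N^2$ for $1 \le j \le N$. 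For arbitrary odd $j$, I decompose $j = 2mN + r$ with $m \ge 0$ and $r \in [1,2N-1]$ odd, observe $\sin(j\pi/(2N)) = (-1)^m\sin(r\pi/(2N))$, and bound $|\sin(r\pi/(2N))| \ge 2\tilde r/(\pi N)$ with $\tilde r := \min(r,2N-r)$, obtaining the \emph{aliasing estimate} $|a_j| \le 2/(j\pi) + \pi/(2\tilde r)$.

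Plugging these into the series and splitting at $j = N$, the low-frequency block satisfies
\[
\sum_{1\le j\le N}\lambda_j^{sp'/2}|a_j|^{p'} \;\lesssim\; N^{-2p'}\sum_{j=1}^{N} j^{p'(1-s)} \;\lesssim\; N^{\,1 - p'(s+1)} \;\le\; N^{-sp'}
\]
(the last step uses $p'\ge 1$), while the high-frequency block, organized into aliasing bands $j = 2mN + r$ with $m \ge 1$, contributes
\[
\sum_{j>N}\lambda_j^{sp'/2}|a_j|^{p'} \;\lesssim\; \sum_{m\ge 1} (mN)^{-sp'}\!\!\sum_{1\le \tilde r\le N}\tilde r^{-p'} \;\lesssim\; N^{-sp'}\sum_{m\ge 1} m^{-sp'}.
\]
The inner sum over $\tilde r$ converges since $p' > 1$ (as $p < \infty$), and the outer sum $\sum_m m^{-sp'}$ converges \emph{precisely} under the assumption $sp' > 1$, i.e.\ $s > 1/p' = 1 - 1/p$. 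Combining the two blocks yields $E_K^{s,p'}(Y) \le C_{s,p}\,N^{-s}$, and the proposition follows from the H\"older-type inequality established in the proof of Proposition~\ref{PEFSA}.

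The main obstacle is the aliasing analysis for $j$ near integer multiples of $2N$: the pointwise size of $|a_j|$ is only $O(1)$ there, and it is the arithmetic decomposition modulo $2N$ that upgrades this into the asymptotic rate $N^{-s}$. The convergence of the block-indexed series $\sum_m m^{-sp'}$ is what pins down the sharp threshold $s > 1 - 1/p$; below this value the method produces a logarithmic factor or worse. The case $p=1$ (so $p' = \infty$) is handled identically, with the $\ell^{p'}$-sum replaced by a supremum.
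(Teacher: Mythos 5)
Your proposal follows the same route as the paper's proof: both pass to the spectral formulation \eqref{457} with $\zeta_j(x)=\sin(j\pi x)$, $\lambda_j=(j\pi)^{-2}$, compute the coefficients $c_j$ explicitly (vanishing for even $j$, equal to $\tfrac{2}{j\pi}-\tfrac{1}{N\sin(j\pi/(2N))}$ for odd $j$), and split the weighted $\ell^{p'}$ sum at $j\sim N$, using the Taylor expansion $|c_j|\simeq j/N^2$ in the low-frequency block. Where you genuinely depart from the paper is in the tail: the paper simply declares $|c_j|$ ``bounded elsewhere'' and estimates the high-frequency block by $C\sum_{j>N}\lambda_j^{sp'/2}\simeq N^{1-sp'}$, which it then (incorrectly) asserts to be $O(N^{-sp'})$ --- as written this loses a factor $N^{1/p'}$ and only yields $E\lesssim N^{1/p'-s}$. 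Your aliasing decomposition $j=2mN+r$, exploiting the $2N$-periodicity of $\sin(j\pi/(2N))$ and the convergence of $\sum_{\tilde r}\tilde r^{-p'}$ within each band, is exactly what is needed to recover the claimed rate $N^{-sp'}$ under $sp'>1$, i.e.\ $s>1-1/p$. So your argument is not just equivalent: it repairs the step the paper elides.

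One caveat, which you inherit from the paper rather than introduce: the chain $N^{-2p'}\sum_{j=1}^{N}j^{p'(1-s)}\lesssim N^{1-p'(s+1)}$ fails when $p'(1-s)<-1$ (i.e.\ $s>1+1/p'$), since the sum is then $\Theta(1)$ and the block is genuinely $\Theta(N^{-2p'})$. Indeed $|c_1|\simeq \pi/(12N^2)$, so for the equally-spaced sequence $E_K^{s,p}(Y)\gtrsim N^{-2}$ no matter how large $s$ is, and the stated bound $C_{s,p}N^{-s}$ saturates at $s=2$. The paper's own proof glosses over this in the same way (its side conditions ``$(s-1)p'>1$, that is $s>1/p$'' are likewise muddled), so this is a defect of the statement's range of validity rather than of your method; it would be worth flagging that the estimate should be read for $s\le 2$, or that higher rates require non-equispaced points.
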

 
\begin{proof} The general case follows  using the spectral formulation \eqref{457}, where $\zeta_i(x) = \sin(i \pi x )$ and $\lambda_i = \frac{1}{( i \pi)^2}$.  We introduce
$$
\aligned
c_i 
&:= \int_{[0,1]}   \zeta_i (x)  dx - {1 \over N}  \sum_{1 \leq n \leq N} \zeta_i (y^n) 
\\
& = \int_{[0,1]}   \sin(i \pi x )  dx   - {1 \over N}  \sum_{1 \leq n \leq N} \sin(i \pi \frac{2n-1}{2N} )  
  =  \frac{1 - \cos(i \pi)}{i \pi} - \frac{\sin^2(\frac{i\pi}{2})}{N \sin( \frac{i\pi}{2 N})}. 
\endaligned
$$
Hence, $c_{2i}= 0$, $c_{2i+1} = \frac{2}{(2i+1) \pi} - \frac{1}{N \sin( \frac{(2i+1)\pi}{2 N})}$ and, in view of \eqref{457}, 
$$
\aligned
E_K^{s,p}(Y)^{p'} 
& = \sum_{i\geq 0} \frac{1}{( (2i+1)\pi)^{sp'}} \Big( \frac{2}{(2i+1) \pi} - \frac{1}{N \sin( \frac{(2i+1)\pi}{2 N})} \Big)^{p'} 
   = \sum_{i\geq 0} \lambda_{2i+1}^{sp'/2} \Big(2 \lambda_{2i+1}^{1/2}  - \frac{1}{N \sin( \frac{1}{2 N \lambda_{2i+1}^{1/2}})} \Big)^{p'}
\endaligned
$$
where $1/p + 1/(p') = 1$. 
In particular, $E_K^{1,2}(Y) = \frac{1}{\sqrt{6} N}$, as computed in Proposition \ref{TCD1}.  For general $s,p$, we get the expansion $\Big(2 \lambda_{2i+1}^{1/2} - \frac{1}{N \sin( \frac{1}{2 N \lambda_{2i+1}^{1/2}})} \Big) \simeq \frac{\lambda_{2i+1}^{-1/2}}{N^2}$ as $\frac{\lambda_{2i+1}^{-1/2}}{2N} < 1$, that is $i \pi < N$, and is bounded elsewhere.) 


The error term in \eqref{457} is bounded as follows: 
$$
E_K^{s,p}(Y)^{p'} \leq \sum_{\pi i \leq N} \frac{C}{N^{2p'}} \lambda_{2i-1}^{(s - 1)p'/2}  +C\sum_{i \pi\geq N} \lambda_{2i+1}^{sp'/2}, 
$$
where the first term in the right hand-side is uniformly bounded by $\frac{C}{N^{2p'}}$ provided $(s - 1)p'>1$, that is, $s> 1/p$. The second term is bounded by the first term $\frac{C}{N^{sp'}}$ provided $sp'>1$.
Thus we can control $E_K^{s,p}(Y)$ by $C/N^s $ in the range $sp'>1$ at least. 
Observe finally that we are led to the same estimate, since 
$$
(E_K^{s,p})^{p'} \leq \sum_{i > N} \lambda_i^{sp'/2} \Big( \int_{[0,1]^D}   \zeta_i (x)  dx - {1 \over N}  \sum_{1 \leq n \leq N} \zeta_i (y^n)  \Big)^{p'} 
\lesssim \frac{1}{N^{sp'}}. 
\qedhere 
$$
\end{proof}


\subsection{Discrete Fourier variables: toward a sharp estimate} 

Our third formulation provides us with the most practical setup for the applications, when the  spectral decomposition may not be explicitly available. 

\begin{proposition}
\label{PEFFV-99} 
Let $\Lbf$ a lattice with elementary cell $\Cbf$ normalized such that $|\Cbf|=1$, and consider $\Lbf^*$ its dual lattice. Let $\alpha^* \in \Lbf^* \mapsto \rho(\alpha^*)$ be any discrete function satisfying $\rho \in \ell^1(\Lbf^*)$, $\rho\ge 0$, and $\rho(0)=1$. Then the kernel 
\bel{PER}
\Kper(x,y) = \sum_{\alpha^* \in \Lbf^*} \rho(\alpha^*)e^{2 i \pi <x-y,\alpha^*>}
\ee
is an admissible kernel with period $\Cbf$ and in the Hilbert space $\Hcal_{\Kper}(\Cbf)$ one has 
\bel{PERROR}
\Big| \int_\Omega \varphi(x)dx - \frac{1}{N}\sum_{n=1}^N \varphi(y^n) \Big| 
\leq
E_K(Y,N,D)  \, \|\varphi\|_{\Hcal_K}, 
\ee
where 
\be
E_K(Y,N,D)^2 = \frac{1}{N^2}\sum_{n,m=1}^N K(y^n,y^m)- 1,
\ee
where $Y$ is any sequence of $N$ points in $\Cbf$. Moreover, setting $\Yt:= \arg \inf_{Y \in \Cbf^N} E_K(Y,N,D)$, one has
\bel{AC}
E_K(N,D)^2 
\leq
 \sum_{n > N} {\rho(\alpha^{*n}) \over N^2} \Big|\sum_{n=1}^N e^{2i \pi <y^n,\alpha^{*n}>} \Big|^2, 
\ee
where the ordering $n \mapsto \alpha^n$ is defined so that $n \mapsto \rho(\alpha^{*n})$ 
is a decreasing sequence along the points $\alpha \in L^*$ while
 $y^1,\ldots,y^N$ are defined by solving the set of equations (when $\alpha^{*n} \neq 0$) 
\bel{COND1}
\sum_{m=1}^N e^{2i \pi < y^m,\alpha^{*n}>} = 0, \qquad 1 \leq n \leq N. 
\ee
\end{proposition}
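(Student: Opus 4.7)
The plan is to combine the Fourier formulation of the discrepancy function (Proposition~\ref{PEFFV}) with the explicit structure of the periodic kernel \eqref{PER}, exploiting the orthogonality of characters on the normalized cell $\Cbf$. Admissibility and $\Cbf$--periodicity of $\Kper$ are already contained in Proposition~\ref{PKFT}, so only the two error formulas need to be established.

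For the identity $E_K(Y,N,D)^2 = \frac{1}{N^2}\sum_{n,m} K(y^n,y^m) - 1$, I would apply the physical-variable factorization of Proposition~\ref{PEFPV}, which yields
$$
E_K(Y,N,D)^2
= \iint_{\Cbf\times\Cbf} \Kper(x,y)\,dxdy
+ \frac{1}{N^2}\sum_{n,m=1}^N \Kper(y^n,y^m)
- \frac{2}{N}\sum_{n=1}^N \int_\Cbf \Kper(x,y^n)\,dx.
$$
The crucial simplification is that, because $|\Cbf|=1$, orthogonality gives $\int_\Cbf e^{2i\pi\langle x,\alpha^*\rangle}\,dx = \delta_{\alpha^*,0}$. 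Substituting the Fourier series \eqref{PER} (allowed by $\rho \in \ell^1(\Lbf^*)$) and exchanging summation with integration, only the $\alpha^*=0$ contribution survives in both integrals, which produces $\rho(0) = 1$. The stated formula follows immediately.

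For the bound \eqref{AC} on the infimum, I would instead invoke the discrete Fourier factorization (Proposition~\ref{PEFFV}) at $s=1$, $p=2$. After again eliminating the $\alpha^*=0$ term by orthogonality, one obtains
$$
E_K(Y,N,D)^2 = \frac{1}{N^2}\sum_{\alpha^*\in \Lbf^*\setminus\{0\}} \rho(\alpha^*)\,\Big|\sum_{n=1}^N e^{2i\pi\langle y^n,\alpha^*\rangle}\Big|^2.
$$
Enumerating the non-zero frequencies as $\alpha^{*1},\alpha^{*2},\ldots$ in decreasing order of $\rho$, one selects $\Yt = (y^1,\ldots,y^N)$ satisfying the $N$ constraints \eqref{COND1}, so that the first $N$ terms of the series vanish. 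Since $E_K(N,D)^2 \le E_K(\Yt,N,D)^2$, the residual sum over $n>N$ provides the claimed upper bound.

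The main obstacle is the solvability of the trigonometric system \eqref{COND1}: these are $N$ complex equations in the $ND$ real unknowns constituting $\Yt \in \Cbf^N$. The natural strategy is a dimension count ensuring enough degrees of freedom for $D \ge 2$, combined with a continuation or degree argument to produce a solution; in dimension $D=1$ explicit symmetric configurations (in the spirit of the equally-spaced sequence of Proposition~\ref{TCD1}) can be used to verify feasibility for the first few frequencies. Alternatively, one can read \eqref{AC} as a conditional statement that holds whenever the system admits a solution, in which case the proof reduces to the Fourier computation above.
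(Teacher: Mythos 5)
Your proposal is correct and follows essentially the same route as the paper: the identity for $E_K(Y,N,D)^2$ comes from the physical-variable factorization \eqref{equa:45} combined with the orthogonality $\int_\Cbf e^{2i\pi\langle x,\alpha^*\rangle}\,dx=\delta_{\alpha^*,0}$ and $\rho(0)=1$, and the bound \eqref{AC} comes from the discrete Fourier form \eqref{458} with $s=1$, $p=2$ after annihilating the $N$ dominant modes via \eqref{COND1}. Your caveat about the solvability of \eqref{COND1} is also consistent with the paper, which proves existence only for rectangular frequency sets on the canonical lattice $\ZZ^D$ (via an explicit tensor-grid construction) and otherwise states it as a conjecture.
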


We conjecture that the system \eqref{COND1} does admit a solution for any set $\{ \alpha^{*n} \in L^* \}_{1 \leq n \leq N}$. Numerically, for this system for each of the periodic kernels of interest we computed (see below) a numerical approximation  for a broad range of dimensions $D$ and integers $N$. 
Moreover, the existence of such points can be established rigorously for several examples of kernels. 
Importantly, we tested numerically (see below) that the following estimate is sharp:
\bel{SHARPESTIM}
E_K(N,D) \simeq \frac{1}{N} \sum_{n > N} \rho(\alpha^{*n}).
\ee
In our examples, we will be able to compute the above sum, both  numerically and analytically, and it will be proven to provide us with a {\sl sharp estimate} of the discrepancy error, altough we cannot establish rigourously the validity of this estimate. 
In the context of Proposition \ref{PEFFV}, we will compute numerically   
\bel{sumnN}
  \sum_{n > N} \rho(\alpha^{*n}) = \sum_{n \ge 0} \rho(\alpha^{*n}) - \sum_{n \le N} \rho(\alpha^{*n}) = K^\per(0,0) - \sum_{n \le N} \rho(\alpha^{*n}).
\ee


Another theoretical standpoint is obtained by the following ``level-set argument'', namely by relying on the approximation 
\bel{LSM}
\sum_{n > N} \rho(\alpha^{*n}) \simeq\int_{\xi / \, \rho(\xi) < m^{-1}(N)} \rho(\xi) d\xi,
\ee
where $m^{-1}$ is the inverse of the function $m(\epsilon) = |\{ \rho(\xi) > \epsilon\}|$, where $\rho(\xi)$ is a smooth extension of $\rho$ on the whole space $\RD$ satisfying 
$\int_{\RD} \rho(\xi) \, d\xi = 1$ with 
$\rho(\xi) \ge 0$ and $\rho(0) = 1$. Unfortunately, this approximation is very inaccurate in the examples we have considered. 
 

In particular, if a transport map of the function is known, that is a
one-to-one map $y :\RD \to \Omega$ such that
$dy = \rho(\xi)d\xi$, then, denoting its inverse as $\xi(y)$, the last integral reduces to measuring the following set
\bel{LSM-99}
\sum_{n > N} \rho(\alpha^{*n}) 
\simeq
\int_{y: \rho(\xi(y)) < m^{-1}(N)} dy  
= \Big| \{y: \rho(\xi(y)) < m^{-1}(N)\}\Big|. 
\ee
Let us make some further remarks: 
\bei

\item If one want to study a general kernel $K(x,y) = \chi(x-y)$ with $x,y \in \Omega$, then one can compute an upper-bound using \ref{AC} with the Fourier coefficients on the doubled lattice $L_2$, generated by the cell $\Omega_2 := \{ x-y \, / \, x,y \in \Omega \}$. This is done using the Poisson formula, i.e. 
\be
K(x,y) = \chi(x-y) 
=  \frac{\big\la  e^{2i \pi <x-y, \cdot>}, \rho\big\ra_{l^2({\mathbf L^*_2)}}}{|\Omega_2|}, 
\qquad 
\rho(\xi) = \frac{1}{(2\pi)^{D/2}}\int_{\Omega_2} \chi(\zeta)e^{-2 i \pi <\xi,\zeta>} \, d\zeta. 
\ee
However, in practical terms, computing Fourier coefficients can be a quite difficult task.

\item We can specify directly the Fourier coefficients $\chi(\alpha^*)$ in the light of Proposition \ref{PEFFV}. This obviously simplifies the problem of finding suitable parameters since they are automatically set by the two conditions $\chi(0) = 1$ and $\chi \in \ell^1(\Lbf^*)$. 

\item It is clear from this result that the price to pay, when the dimension of the problem increases, is increasing also the \textit{regularity} of the kernel, and the space $\Hcal_K$ contains functions that are more regular as the dimension increases. 

\item The lattice $\Lbf$ certainly plays an important role in finding the best constant
$E_K(N,D)$. If $\chi$ is symmetric, the present discussion connects with the sphere packing problem. For instance, for $D=2$, the best lattice is the hexagonal one, for $D=3$ the dodecahedral one (Kepler conjecture), while $D=8,24$ was treated in \cite{Viazovska}. Other cases remain opened.

\eei

\begin{proof}[Proof of Proposition \ref{PEFFV}]
To derive \eqref{PERROR}, we recall from \eqref{equa:45} that  
$$ 
E_K(Y,N,D)^2
=  \iint_{\Cbf \times \Cbf} K(x,y) dxdy 
+ \frac{1}{N^2} \sum_{n,m=1}^N K(y^n,y^m) -\frac{2}{N} \sum_n \int_\Omega K(x,y^n)  dx. 
$$
We use the fact that $K(x,y) = \chi(x-y)$ is $\Lbf$-periodic and deduce that the right-hand side is a constant, and does not depend on $y^n$,  therefore 
$$
\int_\Cbf K(x,y)  \, dx 
= C_0
= \iint_{\Cbf \times \Cbf} K(x,y)\,  dxdy,
\qquad 
 y \in \Cbf,
$$
since $|\Cbf|=1$.
%
Consider \eqref{PER} and the error function expressed in Fourier variable \eqref{458} with $s=1$ and $p=2$, that is, 
$$
E_K(Y,N,D)^2
 = \sum_{\alpha^* \in \Lbf^*} \chi(\alpha^{*}) \Big| \int_\Cbf e^{2i \pi <x,\alpha^*>} \, dx 
- {1 \over N} \sum_{n=1}^N e^{2i \pi <y^n,\alpha^*>} \Big|^2. 
$$
We have 
$$
\int_\Cbf e^{2i \pi<x, \alpha^{*n} >} \, dx  
= 1 \, 
\text{ if } \alpha^{*n}=0, \quad
\text{ while it is } 0 \text{ otherwise}.
$$
When $\alpha^* \neq 0$ we consider   
\bel{4581}
E_K(Y,N,D)^2
= \sum_{\alpha^* \in \Lbf^*, \alpha^* \neq 0} { \chi(\alpha^{*}) \over N^2} \Big| \sum_{n=1}^N e^{2i \pi <y^n,\alpha^*>} \Big|^2. 
\ee 
Using the ordering $n \mapsto \alpha^n$  associated with the function $\chi(\alpha^{*n}$ we arrive at  
the system of equations \eqref{COND1}.  
\end{proof}


The existence of solutions to \eqref{COND1} can for instance be established for the canonical lattice $\Cbf=[0,1]^D$ and $\Lbf=\Lbf^*=\ZZ^D$, 
and any set of points
$
\{ \alpha^{*}:  L_d \le \alpha^*_d < R_d \},
$
with size $N = \prod_{d=1}^D (R_d - L_d)$. 
Without loss of generality we assume that $L_d = 0$ and we set $N_e = \prod_{d=1}^{e-1} R_d $ 
for $e=1, \ldots, D$ (with the convention that $N_1=1$). We define a one-to-one map from the set $\{1 \le n \le N\}$ to $\{ 0 \le n_d < R_d \}_{d=1, \ldots, D}$ by using the map in 
$$
n(n_1, \ldots, n_D) 
= \sum_{d \le D} N_d n_d, 
\qquad \alpha^{*n} = \Big( n_1,\ldots,n_D \Big),
\qquad y^n =  \Big( \frac{ n_d(n)}{R_d} \Big)_{d=1, \ldots, D}. 
$$
We compute
$$
\aligned
  \sum_{n=1}^N e^{2 i \pi < y^n,\alpha^{*m}>} 
&
= \sum_{n=1}^N e^{2 i \pi \sum_d \frac{ n_d(n)}{R_d}n_d(m)} = \sum_{n_1=L_1}^{R_1-1} e^{2 i \pi \frac{n_1}{R_1}n_1^m}  \ldots  \sum_{n_D=L_D}^{R_D - 1} e^{2 i \pi \frac{ n_D^n}{R_D}n_D^m} 
\\
& =  \Big( \frac{1-e^{2 i \pi n_1^m}}{1-e^{2 i \pi \frac{ n_1^m}{ R_1}}} \Big)
 \ldots 
\Big( \frac{1-e^{2 i \pi n_D^m}}{1-e^{2 i \pi \frac{ n_D^m}{ R_D}}} \Big) = 0.
\endaligned
$$


Moreover, for computing the solutions to \eqref{COND1} for an arbitrary set $\{\alpha^{*n} \in \ZZ^D \}$ with size $N$, we can consider the functional
$
I(Y) := \sum_{n=1}^{N} \Big| \sum_{m=1}^N e^{2i \pi <y^m,\alpha^{*n}>} \Big|^2. 
$ 
It is regular and admits the derivatives  
$
\del_{y^k} I(Y)
= \sum_{n=1}^{N} 4 i \pi\alpha^{*n} \sum_{m=1}^N e^{2i \pi <y^m-y^k,\alpha^{*n}>}. 
$
Therefore, we can  use a gradient-based method in order to numerically compute the solutions, and 
we expect that any local minimum of the functional $I(Y)$ will also be a global minimum.


\section{A comparative study for a selection of kernels}
\label{sec-numerics}

\subsection{Our strategy for the numerical study}
\label{introduction}

The aim of this section is to numerically compute the optimal sequences $\Ybar$ and the discrepancy functions sdefined by (with $\Omega = [0,1]^D$) 
\be
  \Ybar = \arg \inf_{Y \in \Omega^{N}} E_K(Y,N,D), 
\qquad E_K(N,D) = E_K(\Ybar,N,D). 
\ee
We treat two classes of interest:
\begin{itemize} 

\item Transported kernel: we recall (see \eqref{equa:45}) that the following quantity provides the worst Monte-Carlo-type integration error for any admissible kernel $K=K^\loc$ localized to the set $\Omega$: 
\bel{tatitata} 
E_K(Y,N,D)^2 
= \frac{1}{N^2} \sum_{n,m=1}^N \iint_{\Omega \times \Omega} \Big(   K^\loc(x,y) +   K^\loc(y^n,y^m) -  K^\loc(y^n, y)  -  K^\loc(x,y^m) \Big) dxdy. 
\ee

\item Periodic kernels: in the context of Proposition \ref{PEFFV}, the expression of the discrepancy function takes a simpler form and in the limit  $N,D \mapsto \infty$ we can compare our results to the asymptotic expression
\bel{AC2}
E_K(Y,N,D)^2 =  \frac{1}{N^2} \sum_{n,m=1}^N  K^\per(y^n,y^m) -  1, 
\qquad   
E_K(N,D) \leq \sum_{n > N} \rho(\alpha^{*n}).
\ee
We recall that $\alpha^n$ are chosen on the dual lattice so that $n \mapsto \chi(\alpha^{*n})$ is decreasing. We choose here the canonical lattice $\mathbf{L} = \ZZ^D$ for simplicity.
\end{itemize}

We start from the four examples in Table~\ref{FDK}, that are defined in
the whole space $\RD$, and design the eight kernels listed in Table~\ref{FDK2}. A normalization parameter, depending on the dimension was determined explicitly for each case.
In each case, we compute $E_K(Y,N,D)$ for the following two sequences: 

\begin{itemize}
\item
  $Y$ is a randomly chosen sequence; 

\item   $Y$ is a numerical sequence that approximates the optimal solution $\Ybar$.
\end{itemize}
\noindent The numerical results for $E_K(Y,N,D)$ are displayed as tables
for $N=16,32, \ldots, 512$ and $D=1,2,4,\ldots,128$.
and we can compare the error obtained for various values $N,D$ and various kernels.
The kernels in the second line of Table~\ref{FDK2} are periodic, so that we can compute the upper bounds given by the formula \eqref{AC} and we can compare it to the numerically computed results. 


\subsection{Periodic kernels}
\label{lattice-based-kernels}

\vskip.05cm \par{\bf Strategy for periodic kernels.}
For each of the four periodic kernels in Table~\ref{FDK2}, we present three tables:

\begin{itemize}

\item[(A)] $E_K(Y,N,D)$ for a randomly chosen sequence $Y$. 

\item[(B)] $E_K(Y,N,D)$ for a numerical sequence that approximates the optimal solution $\Ybar$.

\item[(C)] $ \sqrt{{\frac{1 }{N}} \sum_{n > N} \rho(\alpha^{*n})}$ which is our theoretical (but yet heuristic) asymptotic  rate of convergence \eqref{SHARPESTIM}. 
\end{itemize}
From a computational point of view, some remarks are in order:
\begin{itemize}
\item In item (A) above, the random sequences $Y$ are computed using the standard Mersenne-Twister mt19937. The error bound $E_K(Y,N,D)$ can be computed using the left-hand formula in \eqref{AC2}.

\item In item (B) above, in order to compute the optimal $Y$ we have two strategies: 

\begin{itemize}

\item We can either minimizing directly the left-hand side of \eqref{AC2}. This optimization problem can be solved easily by a gradient descent and is computationally tractable. However, depending upon the choice of the kernel this method can show poor performances (i presence of almost vanishing gradients.

\item Or else, we can compute the first values $\alpha^{*n}$ for $n=0, \ldots, N-1$ and then solve the equation \eqref{COND1}. We recall here that 
$
\sum_{m=1}^N e^{2i \pi < y^m,\alpha^{*n}>} = 0$
for $1 \leq n \leq N$. 
We used this method and it always gave ni principle good numerical results. However, to solve the relevant system we must use an algorithm whose complexity is of order $N^3$, that leads to extremely long execution times as $N$ becomes large.

\item Once the sequence $Y$ is computed, the error $E_K(Y,N,D)$ is obtained using the left-hand side of \eqref{AC2}.
\end{itemize}

\item Finally, in item (C) above, we compute first the set of values $\chi(\alpha^{*n})$ for 
$n=1, \ldots, N$ by using a graph-search type algorithm. Observe that this step can be challenging in the case of a rather oscillatory function $\chi$ such as the truncated kernel.
\end{itemize}


For each periodic kernel we provide below the asymptotic formula derived from our level-set arguments, although it is not very accurate within the range of $N,D$ under consideration.
We plot in this section the distribution of sequences of points approximating the best discrepancy sequences $\Yb$. These figures corresponds to $N=256$ and $ D=2$. Observe that none of these distributions is radially-symmetric, as the corresponding periodic kernels are not.

\begin{figure}

\centering{\includegraphics[width=0.45\linewidth]{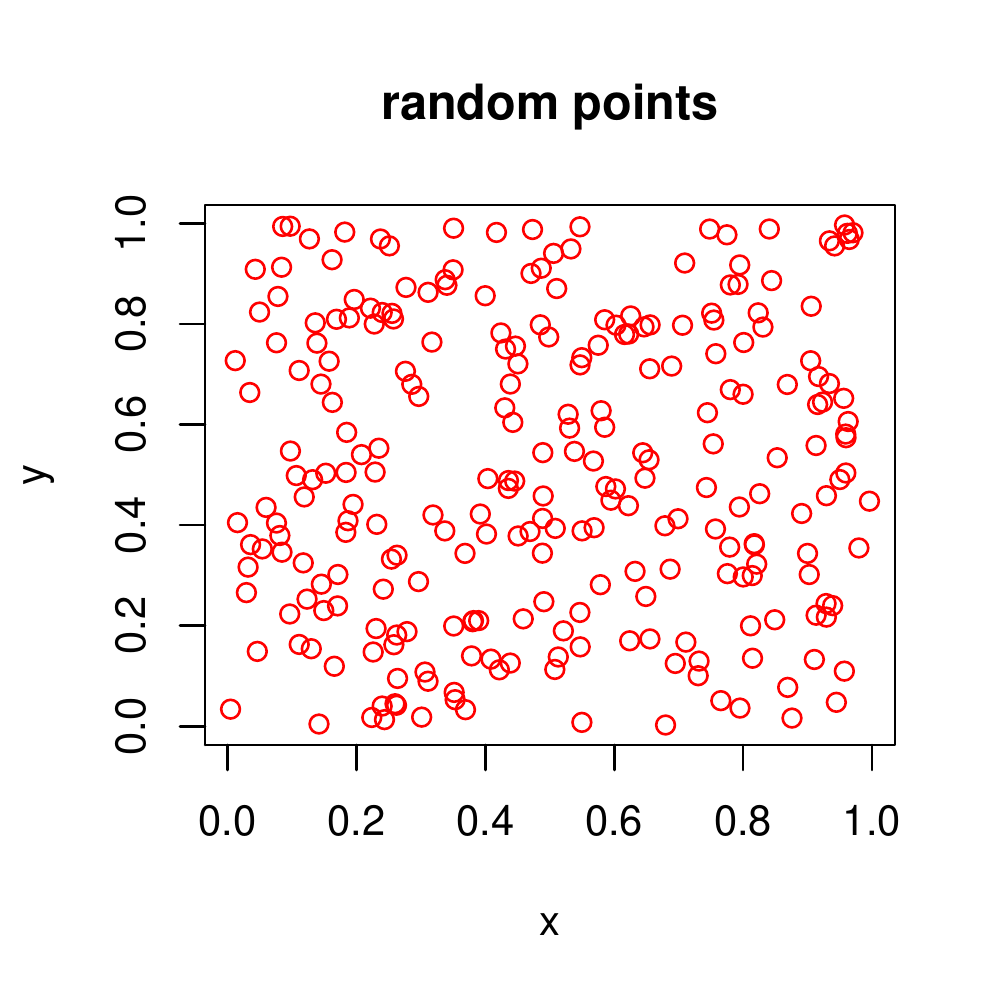} \includegraphics[width=0.45\linewidth]{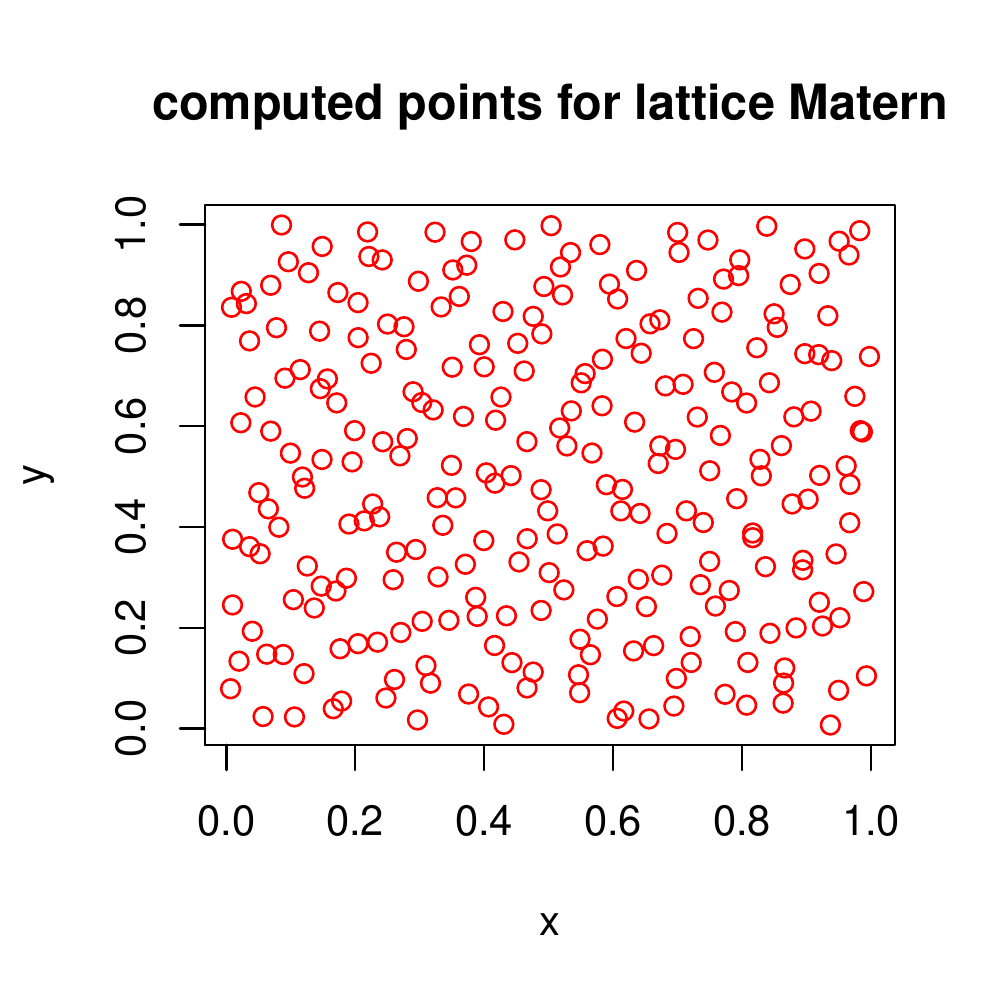} \includegraphics[width=0.45\linewidth]{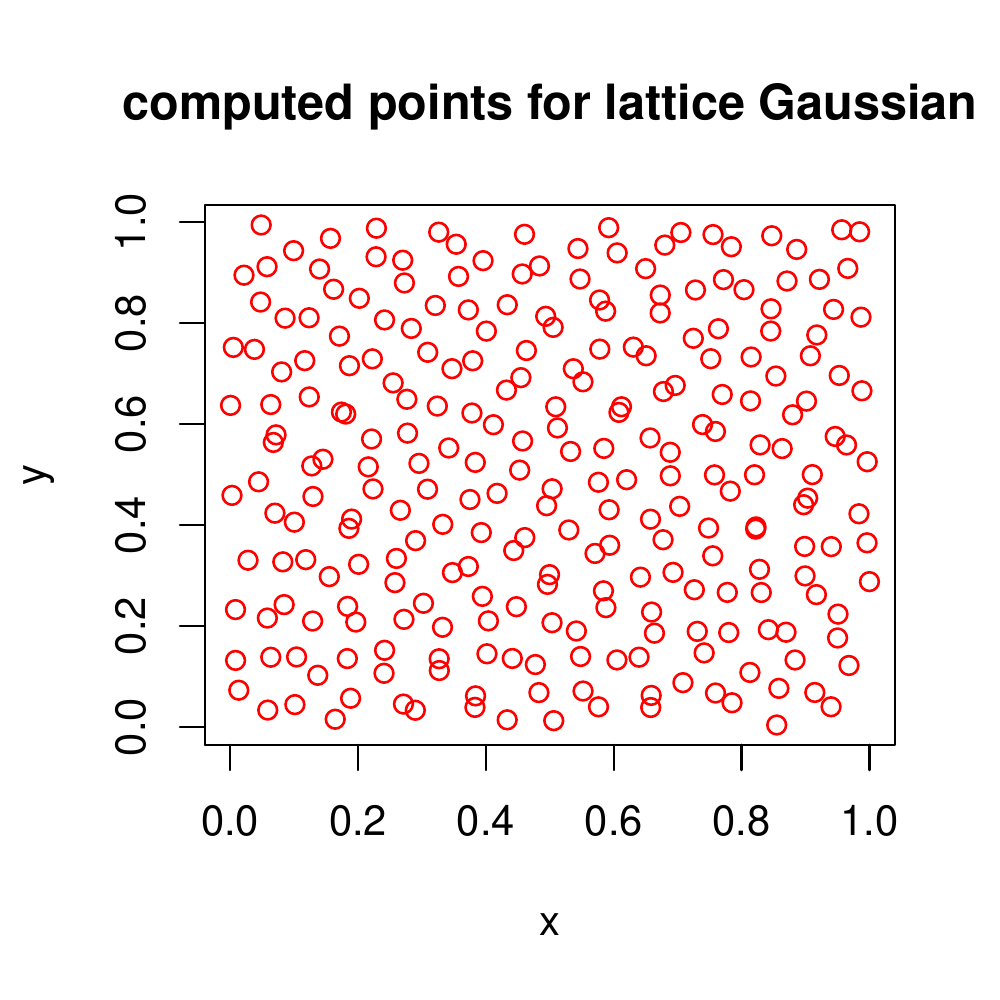} \includegraphics[width=0.45\linewidth]{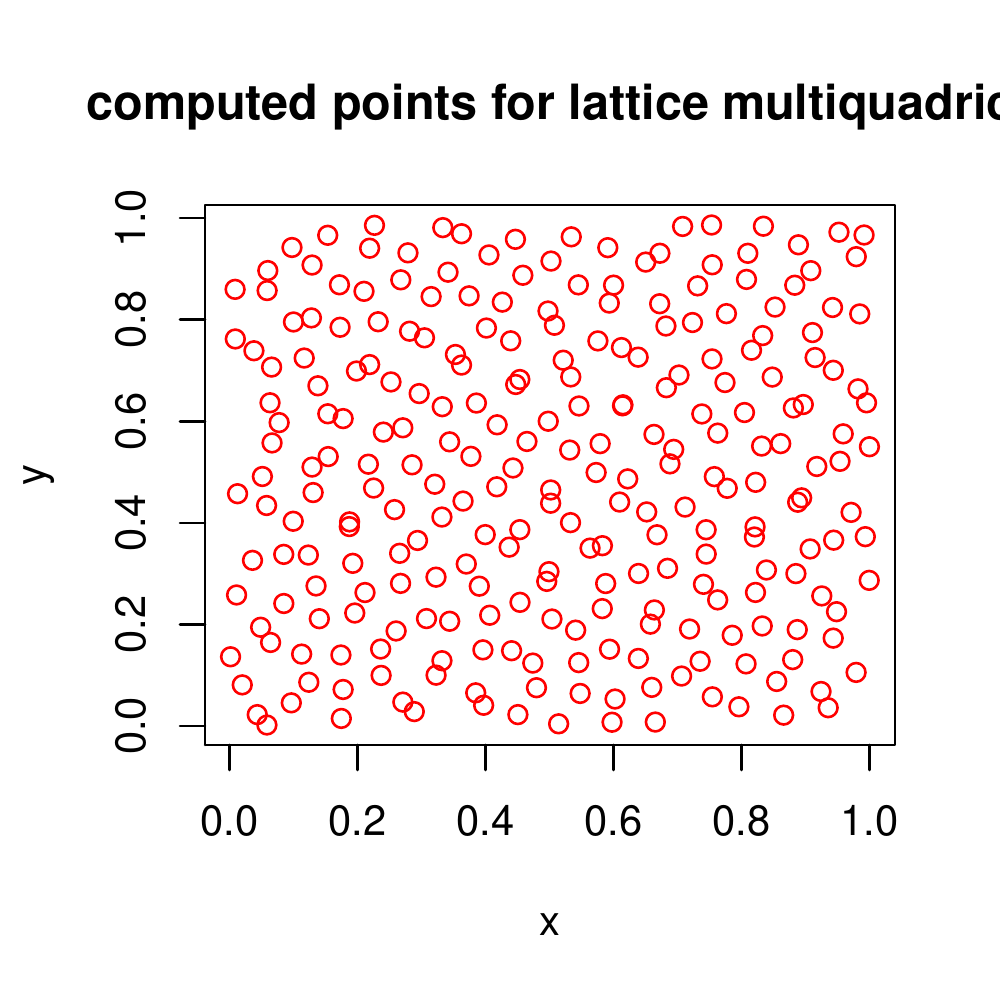} \includegraphics[width=0.45\linewidth]{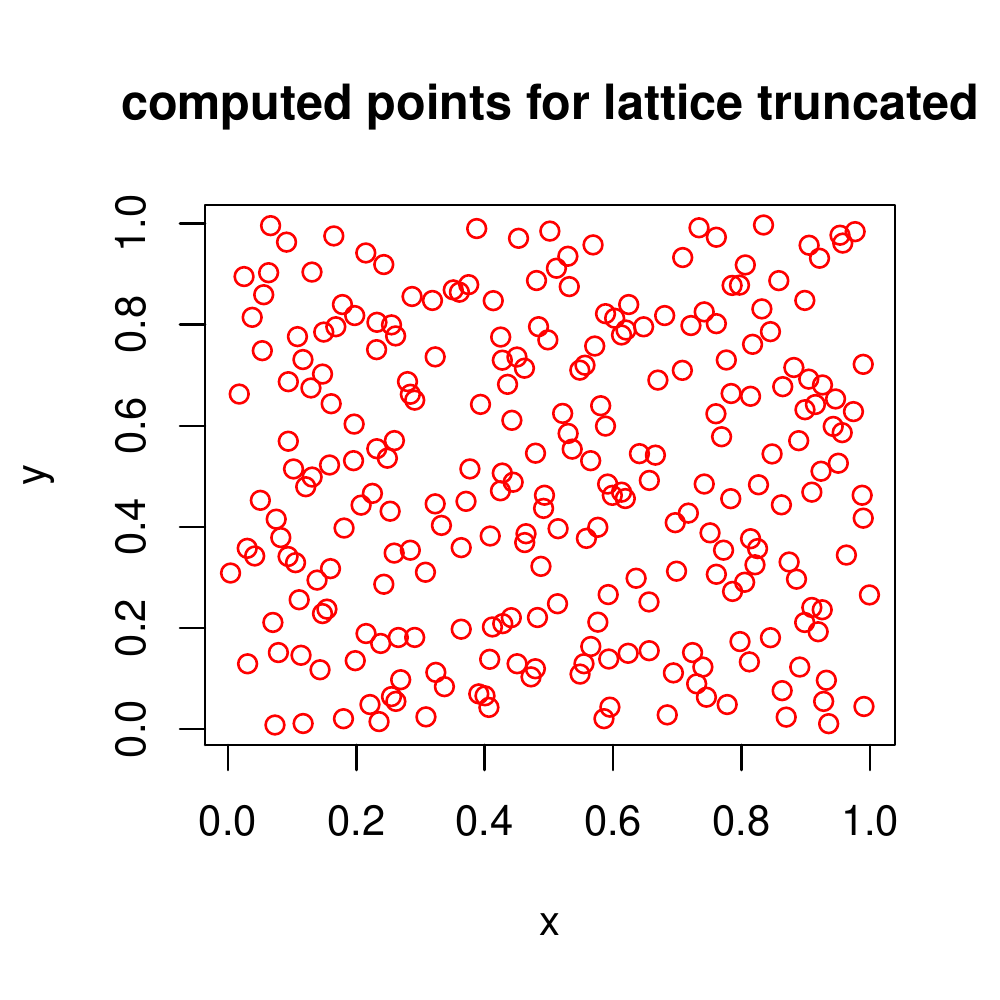} 
}
\caption{\label{DISTRIBLATTICE}  Random and numerical sequences for periodic kernels with $N=256$}\label{fig:DISTRIBLATTICE}
\end{figure}


\vskip.05cm \par{\bf Numerical results for the periodic tensorial exponential kernel.}
For the periodic kernel \eqref{exponential} the level-set method \eqref{LSM} applies and lead us to the bound
$E_K(N,D) \lesssim \frac{\log(N)^{D-1}}{N}$, which is probably far from 
being optimal but coincides with the Koksma-Hlawka inequality for
low-discrepancy sequences.
Here, the space $\Hcal_K^{1,1}([0,1]^D)$ (see \eqref{MAT1P}) is similar to the space of functions with bounded variation  $BV([0,1]^D)$. The three relevant tables are given here. 
Observe that the discrepancy error for the numerical points is
always much smaller than the discrepancy error for the random points, as was expected. 

Note that the asymptotic convergence rate is quite close to the one obtained with the numerical sequence in Table~\ref{decadix}. There exist cases for which this convergence rate is smaller than the
theoretical bound. This might seem to be a contradiction, since the latter bound is supposed to be the minimum over all possible sequences.  
However, several sources of  (yet small) numerical error arise due to the search algorithm of the decreasing sequence $\alpha^{*n}$. In addition, a second (small) error arises from the estimate \ref{sumnN}, which is probably sharp but yet only an approximation of the theoretical bound.


\begin{longtable}[]{@{}lrrrrrrrr@{}}
\caption{$E_K(Y,N,D)$ for periodic tensorial exponential with random points $Y$}\tabularnewline
\toprule
& D= 1 & D= 2 & D= 4 & D= 8 & D= 16 & D= 32 & D= 64 & D=
128\tabularnewline
\midrule
\endfirsthead
\toprule
& D= 1 & D= 2 & D= 4 & D= 8 & D= 16 & D= 32 & D= 64 & D=
128\tabularnewline
\midrule
\endhead
N= 16 & 0.228 & 0.365 & 0.355 & 0.312 & 0.304 & 0.308 & 0.326 &
0.319\tabularnewline
N= 32 & 0.307 & 0.222 & 0.216 & 0.199 & 0.210 & 0.228 & 0.231 &
0.234\tabularnewline
N= 64 & 0.154 & 0.142 & 0.138 & 0.139 & 0.150 & 0.168 & 0.163 &
0.165\tabularnewline
N= 128 & 0.117 & 0.088 & 0.087 & 0.097 & 0.111 & 0.116 & 0.114 &
0.115\tabularnewline
N= 256 & 0.060 & 0.053 & 0.061 & 0.077 & 0.075 & 0.081 & 0.081 &
0.082\tabularnewline
N= 512 & 0.035 & 0.038 & 0.050 & 0.049 & 0.054 & 0.057 & 0.057 &
0.059\tabularnewline
\bottomrule
\end{longtable}

\begin{longtable}[]{@{}lrrrrrrrr@{}} 
\caption{$E_K(Y,N,D)$ for periodic tensorial exponential with numerical points $Y$}\tabularnewline
\toprule
& D= 1 & D= 2 & D= 4 & D= 8 & D= 16 & D= 32 & D= 64 & D=
128\tabularnewline
\midrule
\endfirsthead
\toprule
& D= 1 & D= 2 & D= 4 & D= 8 & D= 16 & D= 32 & D= 64 & D=
128\tabularnewline
\midrule
\endhead
N= 16 & 0.062 & 0.126 & 0.172 & 0.195 & 0.211 & 0.217 & 0.221 &
0.223\tabularnewline
N= 32 & 0.031 & 0.075 & 0.114 & 0.131 & 0.143 & 0.149 & 0.151 &
0.153\tabularnewline
N= 64 & 0.016 & 0.049 & 0.076 & 0.090 & 0.099 & 0.103 & 0.106 &
0.107\tabularnewline
N= 128 & 0.008 & 0.030 & 0.051 & 0.063 & 0.069 & 0.073 & 0.074 &
0.077\tabularnewline
N= 256 & 0.004 & 0.020 & 0.034 & 0.043 & 0.048 & 0.051 & 0.054 &
0.061\tabularnewline
N= 512 & 0.002 & 0.012 & 0.022 & 0.030 & 0.034 & 0.037 & 0.042 &
0.049\tabularnewline
\bottomrule
\label{decadix}
\end{longtable}


\begin{longtable}[]{@{}lrrrrrrrr@{}}
\caption{$E_K(N,D)$ for periodic tensorial exponential with the asymptotic formula}\tabularnewline
\toprule
& D= 1 & D= 2 & D= 4 & D= 8 & D= 16 & D= 32 & D= 64 & D=
128\tabularnewline
\midrule
\endfirsthead
\toprule
& D= 1 & D= 2 & D= 4 & D= 8 & D= 16 & D= 32 & D= 64 & D=
128\tabularnewline
\midrule
\endhead
N= 16 & 0.069 & 0.143 & 0.202 & 0.245 & 0.288 & 0.308 & 0.318 &
0.323\tabularnewline
N= 32 & 0.034 & 0.082 & 0.129 & 0.157 & 0.179 & 0.207 & 0.220 &
0.226\tabularnewline
N= 64 & 0.017 & 0.046 & 0.078 & 0.102 & 0.116 & 0.129 & 0.147 &
0.156\tabularnewline
N= 128 & 0.009 & 0.026 & 0.048 & 0.067 & 0.077 & 0.084 & 0.092 &
0.105\tabularnewline
N= 256 & 0.004 & 0.014 & 0.029 & 0.042 & 0.052 & 0.056 & 0.060 &
0.066\tabularnewline
N= 512 & 0.002 & 0.008 & 0.018 & 0.027 & 0.034 & 0.038 & 0.040 &
0.043\tabularnewline
\bottomrule
\end{longtable}


\vskip.05cm \par{\bf Numerical results for the periodic multiquadric kernel.}
For the periodic multiquadric kernel and using the level-set argument in \eqref{LSM} we can derive the bound  
$E_K(N,D) \lesssim
 \frac{2^D}{\tau_D^D}\Big(1 - \exp\big(\frac{-N^{1/D} D}{2\tau_D}\big) \Big)$. 
The three tables are now presented and the same observations as above for the exponential kernel can be made. Note again the existence of a small numerical error for the two-dimensional case, as the error discrepancy for the optimized sequence is not decreasing, namely for $D=2$ and $N=256,512$. 
Observe that the error vanishes for the one-dimensional case provides $N\ge 64$. This was expected since we are dealing with a kernel generating a space consisting of periodic function, whose Fourier coefficient decreases at exponential rate, as predicted by our theoretical convergence rate above. From a numerical point of view, it is expected to be a finite dimensional space, having few basis functions in low dimensions.

\begin{longtable}[]{@{}lrrrrrrrr@{}}
\caption{$E_K(Y,N,D)$ for periodic tensorial multiquadric with random points $Y$}\tabularnewline
\toprule
& D= 1 & D= 2 & D= 4 & D= 8 & D= 16 & D= 32 & D= 64 & D=
128\tabularnewline
\midrule
\endfirsthead
\toprule
& D= 1 & D= 2 & D= 4 & D= 8 & D= 16 & D= 32 & D= 64 & D=
128\tabularnewline
\midrule
\endhead
N= 16 & 0.249 & 0.410 & 0.392 & 0.315 & 0.302 & 0.301 & 0.325 &
0.311\tabularnewline
N= 32 & 0.349 & 0.245 & 0.230 & 0.196 & 0.201 & 0.227 & 0.226 &
0.234\tabularnewline
N= 64 & 0.171 & 0.150 & 0.140 & 0.136 & 0.143 & 0.169 & 0.163 &
0.167\tabularnewline
N= 128 & 0.130 & 0.090 & 0.086 & 0.093 & 0.108 & 0.117 & 0.115 &
0.114\tabularnewline
N= 256 & 0.066 & 0.050 & 0.059 & 0.075 & 0.075 & 0.081 & 0.081 &
0.082\tabularnewline
N= 512 & 0.036 & 0.036 & 0.049 & 0.048 & 0.055 & 0.057 & 0.057 &
0.059\tabularnewline
\bottomrule
\end{longtable}
\begin{longtable}[]{@{}lrrrrrrrr@{}}
\caption{$E_K(Y,N,D)$ for periodic tensorial multiquadric with numerical points $Y$}\tabularnewline
\toprule
& D= 1 & D= 2 & D= 4 & D= 8 & D= 16 & D= 32 & D= 64 & D=
128\tabularnewline
\midrule
\endfirsthead
\toprule
& D= 1 & D= 2 & D= 4 & D= 8 & D= 16 & D= 32 & D= 64 & D=
128\tabularnewline
\midrule
\endhead
N= 16 & 0.002 & 0.078 & 0.172 & 0.204 & 0.261 & 0.277 & 0.313 &
0.306\tabularnewline
N= 32 & 0.000 & 0.030 & 0.095 & 0.128 & 0.149 & 0.198 & 0.210 &
0.227\tabularnewline
N= 64 & 0.000 & 0.005 & 0.045 & 0.081 & 0.103 & 0.106 & 0.140 &
0.157\tabularnewline
N= 128 & 0.000 & 0.001 & 0.018 & 0.044 & 0.067 & 0.074 & 0.075 &
0.098\tabularnewline
N= 256 & 0.000 & 0.002 & 0.007 & 0.024 & 0.042 & 0.051 & 0.052 &
0.053\tabularnewline
N= 512 & 0.000 & 0.007 & 0.003 & 0.014 & 0.021 & 0.034 & 0.037 &
0.037\tabularnewline
\bottomrule
\end{longtable}


\begin{longtable}[]{@{}lrrrrrrrr@{}}
\caption{$E_K(N,D)$ for periodic tensorial multiquadric with the asymptotic formula}\tabularnewline
\toprule
& D= 1 & D= 2 & D= 4 & D= 8 & D= 16 & D= 32 & D= 64 & D=
128\tabularnewline
\midrule
\endfirsthead
\toprule
& D= 1 & D= 2 & D= 4 & D= 8 & D= 16 & D= 32 & D= 64 & D=
128\tabularnewline
\midrule
\endhead
N= 16 & 0.004 & 0.081 & 0.171 & 0.207 & 0.272 & 0.301 & 0.314 &
0.321\tabularnewline
N= 32 & 0.000 & 0.027 & 0.092 & 0.134 & 0.148 & 0.194 & 0.213 &
0.223\tabularnewline
N= 64 & 0.000 & 0.005 & 0.044 & 0.085 & 0.100 & 0.105 & 0.137 &
0.151\tabularnewline
N= 128 & 0.000 & 0.001 & 0.017 & 0.043 & 0.067 & 0.073 & 0.075 &
0.097\tabularnewline
N= 256 & 0.000 & 0.000 & 0.008 & 0.025 & 0.043 & 0.050 & 0.052 &
0.053\tabularnewline
N= 512 & 0.000 & 0.000 & 0.003 & 0.014 & 0.021 & 0.034 & 0.036 &
0.037\tabularnewline
\bottomrule
\end{longtable}

\vskip.05cm \par{\bf Numerical results for the periodic Gaussian kernel.}
For the periodic kernel \eqref{KJ} we can use our level-set arguments in \eqref{LSM} and we c an arribve at the  bound
$
E_K(N,D) \le N^{1- 1/D} \exp(-N^{2/D})$. 
We now present the three tables of interest and again the results are quite similar to the exponential and multiquadric cases.

\begin{longtable}[]{@{}lrrrrrrrr@{}}
\caption{$E_K(Y,N,D)$ for periodic Gaussian with random points $Y$}\tabularnewline
\toprule
& D= 1 & D= 2 & D= 4 & D= 8 & D= 16 & D= 32 & D= 64 & D=
128\tabularnewline
\midrule
\endfirsthead
\toprule
& D= 1 & D= 2 & D= 4 & D= 8 & D= 16 & D= 32 & D= 64 & D=
128\tabularnewline
\midrule
\endhead
N= 16 & 0.279 & 0.445 & 0.400 & 0.316 & 0.301 & 0.301 & 0.325 &
0.311\tabularnewline
N= 32 & 0.415 & 0.267 & 0.235 & 0.196 & 0.200 & 0.227 & 0.226 &
0.234\tabularnewline
N= 64 & 0.205 & 0.159 & 0.138 & 0.136 & 0.142 & 0.168 & 0.163 &
0.167\tabularnewline
N= 128 & 0.152 & 0.090 & 0.085 & 0.092 & 0.107 & 0.117 & 0.115 &
0.114\tabularnewline
N= 256 & 0.075 & 0.047 & 0.059 & 0.074 & 0.075 & 0.081 & 0.081 &
0.082\tabularnewline
N= 512 & 0.036 & 0.034 & 0.046 & 0.048 & 0.055 & 0.057 & 0.057 &
0.059\tabularnewline
\bottomrule
\end{longtable}
\begin{longtable}[]{@{}lrrrrrrrr@{}}
\caption{$E_K(Y,N,D)$ for periodic Gaussian with numerical points $Y$}\tabularnewline
\toprule
& D= 1 & D= 2 & D= 4 & D= 8 & D= 16 & D= 32 & D= 64 & D=
128\tabularnewline
\midrule
\endfirsthead
\toprule
& D= 1 & D= 2 & D= 4 & D= 8 & D= 16 & D= 32 & D= 64 & D=
128\tabularnewline
\midrule
\endhead
N= 16 & 0 & 0.008 & 0.164 & 0.191 & 0.258 & 0.276 & 0.313 &
0.306\tabularnewline
N= 32 & 0 & 0.000 & 0.051 & 0.123 & 0.145 & 0.197 & 0.209 &
0.227\tabularnewline
N= 64 & 0 & 0.000 & 0.013 & 0.075 & 0.099 & 0.105 & 0.140 &
0.157\tabularnewline
N= 128 & 0 & 0.000 & 0.003 & 0.033 & 0.064 & 0.072 & 0.075 &
0.098\tabularnewline
N= 256 & 0 & 0.002 & 0.000 & 0.019 & 0.041 & 0.050 & 0.052 &
0.053\tabularnewline
N= 512 & 0 & 0.008 & 0.000 & 0.008 & 0.018 & 0.033 & 0.036 &
0.037\tabularnewline
\bottomrule
\end{longtable} 
\begin{longtable}[]{@{}lrrrrrrrr@{}}
\caption{$E_K(N,D)$ for periodic Gaussian with the asymptotic formula}\tabularnewline
\toprule
& D= 1 & D= 2 & D= 4 & D= 8 & D= 16 & D= 32 & D= 64 & D=
128\tabularnewline
\midrule
\endfirsthead
\toprule
& D= 1 & D= 2 & D= 4 & D= 8 & D= 16 & D= 32 & D= 64 & D=
128\tabularnewline
\midrule
\endhead
N= 16 & 0 & 0.018 & 0.145 & 0.198 & 0.270 & 0.300 & 0.314 &
0.321\tabularnewline
N= 32 & 0 & 0.000 & 0.052 & 0.126 & 0.145 & 0.193 & 0.213 &
0.223\tabularnewline
N= 64 & 0 & 0.000 & 0.012 & 0.077 & 0.097 & 0.104 & 0.137 &
0.151\tabularnewline
N= 128 & 0 & 0.000 & 0.002 & 0.032 & 0.065 & 0.072 & 0.074 &
0.097\tabularnewline
N= 256 & 0 & 0.000 & 0.000 & 0.020 & 0.041 & 0.050 & 0.052 &
0.053\tabularnewline
N= 512 & 0 & 0.000 & 0.000 & 0.008 & 0.018 & 0.033 & 0.036 &
0.037\tabularnewline
\bottomrule
\end{longtable}

\vskip.05cm \par{\bf Numerical results for the periodic truncated kernel.} 
For the periodic kernel \eqref{KJ} we can use our level-set method arguments and we arrive at the
 bound
$E_K(N,D) \le \frac{\log(N)^{D-1}}{N^2}$. 
We now present the three relevant  tables as above and the observations we made concerning these results are quite similar.

\begin{longtable}[]{@{}lrrrrrrrr@{}}
\caption{$E_K(Y,N,D)$ for periodic tensorial truncated with random points $Y$}\tabularnewline
\toprule
& D= 1 & D= 2 & D= 4 & D= 8 & D= 16 & D= 32 & D= 64 & D=
128\tabularnewline
\midrule
\endfirsthead
\toprule
& D= 1 & D= 2 & D= 4 & D= 8 & D= 16 & D= 32 & D= 64 & D=
128\tabularnewline
\midrule
\endhead
N= 16 & 0.279 & 0.398 & 0.354 & 0.327 & 0.315 & 0.317 & 0.324 &
0.328\tabularnewline
N= 32 & 0.382 & 0.240 & 0.216 & 0.210 & 0.225 & 0.225 & 0.232 &
0.231\tabularnewline
N= 64 & 0.189 & 0.148 & 0.142 & 0.149 & 0.159 & 0.163 & 0.163 &
0.163\tabularnewline
N= 128 & 0.142 & 0.092 & 0.090 & 0.105 & 0.113 & 0.116 & 0.115 &
0.115\tabularnewline
N= 256 & 0.071 & 0.051 & 0.063 & 0.081 & 0.079 & 0.082 & 0.082 &
0.082\tabularnewline
N= 512 & 0.038 & 0.040 & 0.055 & 0.052 & 0.056 & 0.057 & 0.057 &
0.058\tabularnewline
\bottomrule
\end{longtable} 
\begin{longtable}[]{@{}lrrrrrrrr@{}}
\caption{$E_K(Y,N,D)$ for periodic tensorial truncated with numerical points $Y$}\tabularnewline
\toprule
& D= 1 & D= 2 & D= 4 & D= 8 & D= 16 & D= 32 & D= 64 & D=
128\tabularnewline
\midrule
\endfirsthead
\toprule
& D= 1 & D= 2 & D= 4 & D= 8 & D= 16 & D= 32 & D= 64 & D=
128\tabularnewline
\midrule
\endhead
N= 16 & 0.062 & 0.100 & 0.176 & 0.209 & 0.233 & 0.276 & 0.303 &
0.315\tabularnewline
N= 32 & 0.031 & 0.058 & 0.116 & 0.140 & 0.156 & 0.173 & 0.196 &
0.214\tabularnewline
N= 64 & 0.016 & 0.035 & 0.079 & 0.096 & 0.107 & 0.119 & 0.125 &
0.139\tabularnewline
N= 128 & 0.007 & 0.021 & 0.049 & 0.067 & 0.075 & 0.081 & 0.085 &
0.090\tabularnewline
N= 256 & 0.004 & 0.013 & 0.030 & 0.046 & 0.052 & 0.056 & 0.058 &
0.061\tabularnewline
N= 512 & 0.002 & 0.010 & 0.021 & 0.032 & 0.036 & 0.039 & 0.040 &
0.042\tabularnewline
\bottomrule
\end{longtable} 
\begin{longtable}[]{@{}lrrrrrrrr@{}}
\caption{$E_K(N,D)$ for periodic tensorial truncated with the asymptotic formula}\tabularnewline
\toprule
& D= 1 & D= 2 & D= 4 & D= 8 & D= 16 & D= 32 & D= 64 & D=
128\tabularnewline
\midrule
\endfirsthead
\toprule
& D= 1 & D= 2 & D= 4 & D= 8 & D= 16 & D= 32 & D= 64 & D=
128\tabularnewline
\midrule
\endhead
N= 16 & 0.062 & 0.127 & 0.217 & 0.289 & 0.314 & 0.322 & 0.325 &
0.327\tabularnewline
N= 32 & 0.031 & 0.077 & 0.133 & 0.188 & 0.218 & 0.227 & 0.230 &
0.231\tabularnewline
N= 64 & 0.016 & 0.042 & 0.086 & 0.114 & 0.148 & 0.159 & 0.162 &
0.163\tabularnewline
N= 128 & 0.007 & 0.023 & 0.054 & 0.073 & 0.096 & 0.110 & 0.114 &
0.115\tabularnewline
N= 256 & 0.004 & 0.013 & 0.034 & 0.050 & 0.059 & 0.075 & 0.080 &
0.081\tabularnewline
N= 512 & 0.002 & 0.007 & 0.022 & 0.034 & 0.038 & 0.049 & 0.055 &
0.057\tabularnewline
\bottomrule
\end{longtable}


\subsection{Transported kernels}

The numerical results we performed on transported kernel kerne are very similar to the one we just described for periodic kernels. We content with pointing out here significant differences between periodic and transported kernels:
\bei

\item We did not numerically compute the theoretical convergence rates for the four transported kernels. Indeed, we would need to compute the Fourier coefficients appearing in Proposition~\ref{PEFFV}. 
This appears to be too costly and computationally out of reach with the existing techniques. 

\item On the other hand, we did compute the error function thanks to its expression \eqref{tatitata}, evaluated via a direct Monte-Carlo method. This is still computationally expensive, and adds some additional white noise to the final results.
\eei

Thus, in this section, we restrict ourselves with presenting the plot in Figure~\ref{DISTRIBTRANSPORTED} displaying the distribution of points that approximates the best discrepancy sequences $\Yb$. This corresponds to the choice $N=256$ and the dimension $D=2$. Some observations are in order:
\bei
\item Among our four transportation-based kernels, three are radially-symmetric, that is, Gaussian, multiquadric, and truncated. Their best discrepancy sequences are expected to be radially-symmetric. 
Indeed, our numerical tests confirm this property for the multiquadric and truncated kernels, which enjoy similar properties. 

\item However the Gaussian kernel did not lead us to a radially-symmetric result. This is due to a numerical challenge we can refer to as the ``vanishing gradient problem'' (a terminology used in the artificial intelligence community), due to the fact that the functional we want to minimize is almost flat.  
This problem did arise also with the multiquadric kernel, but we solved it by using the same trick discussed already in Section~\ref{lattice-based-kernels}.

\item The exponential kernel is not radially-symmetric so that its best discrepancy sequence is not expected to be so, and this is fully consistent with our results in  Figure~\ref{DISTRIBTRANSPORTED}.
\eei

\begin{figure}

\centering{\includegraphics[width=0.45\linewidth]{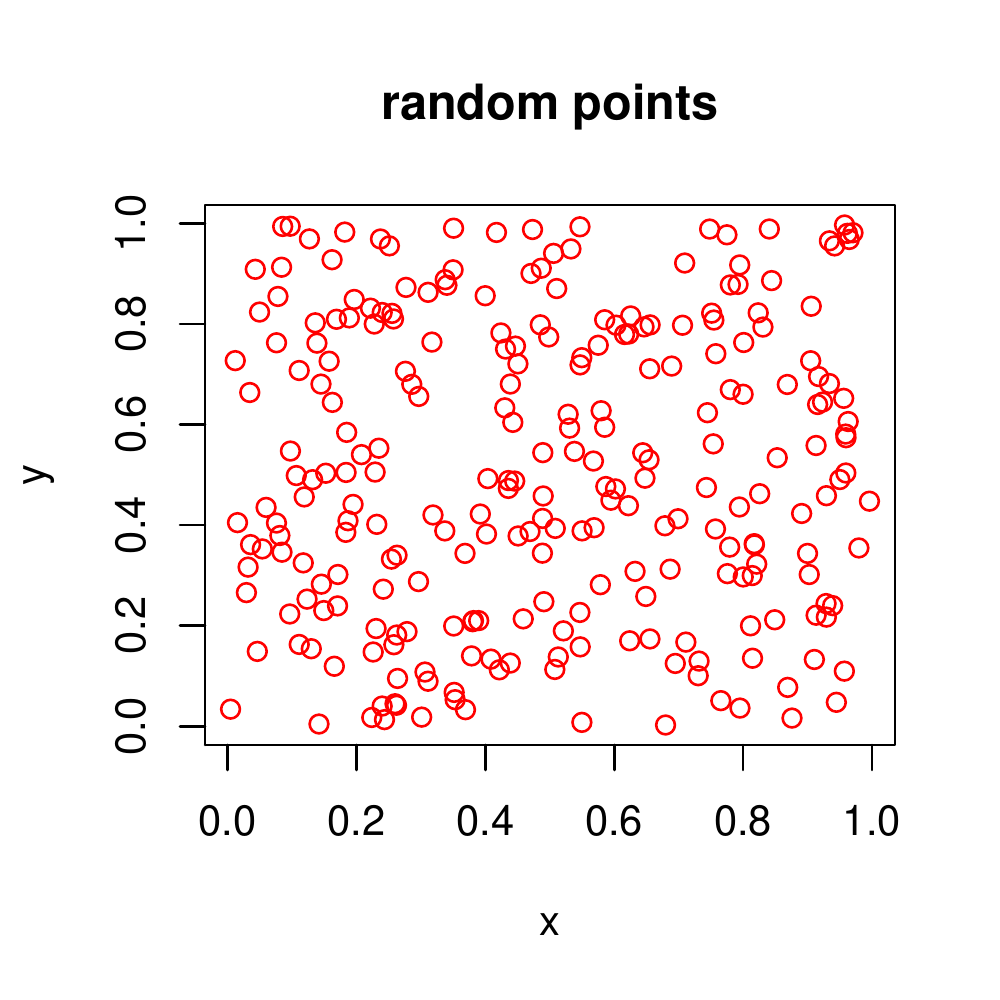} \includegraphics[width=0.45\linewidth]{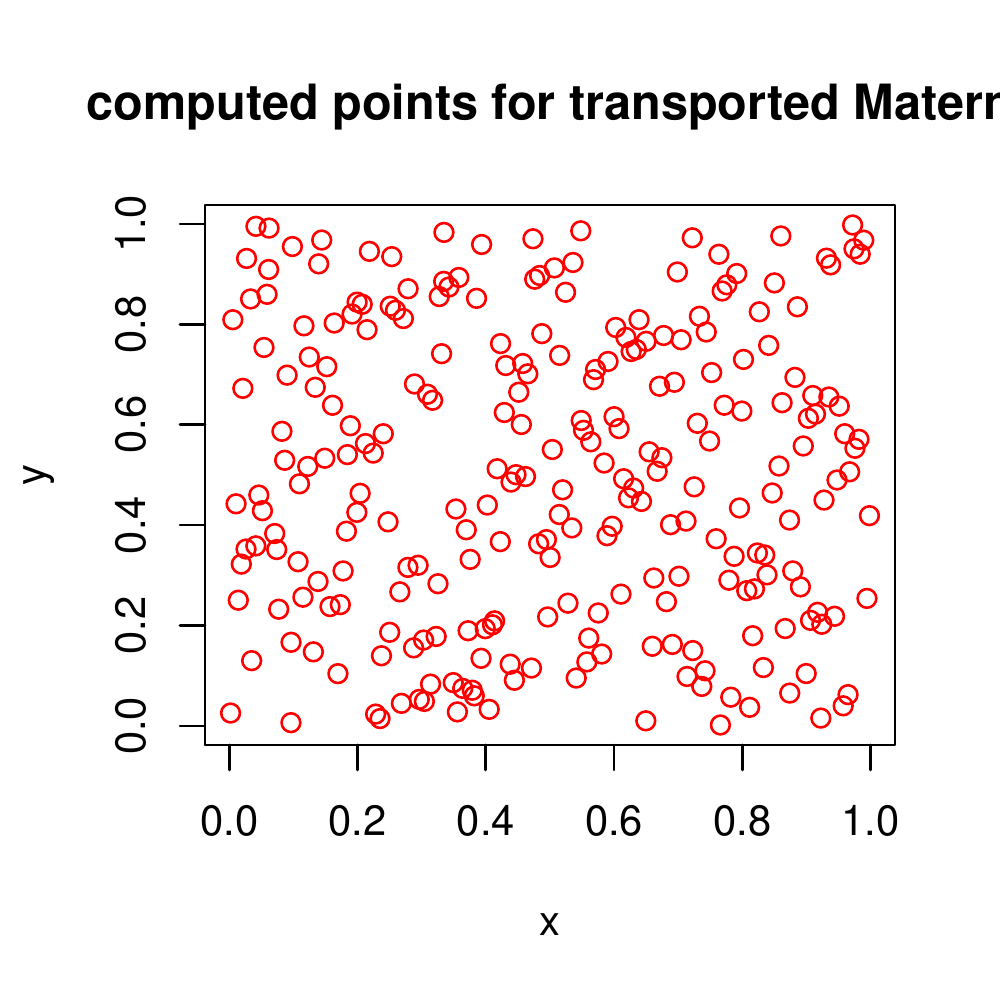} \includegraphics[width=0.45\linewidth]{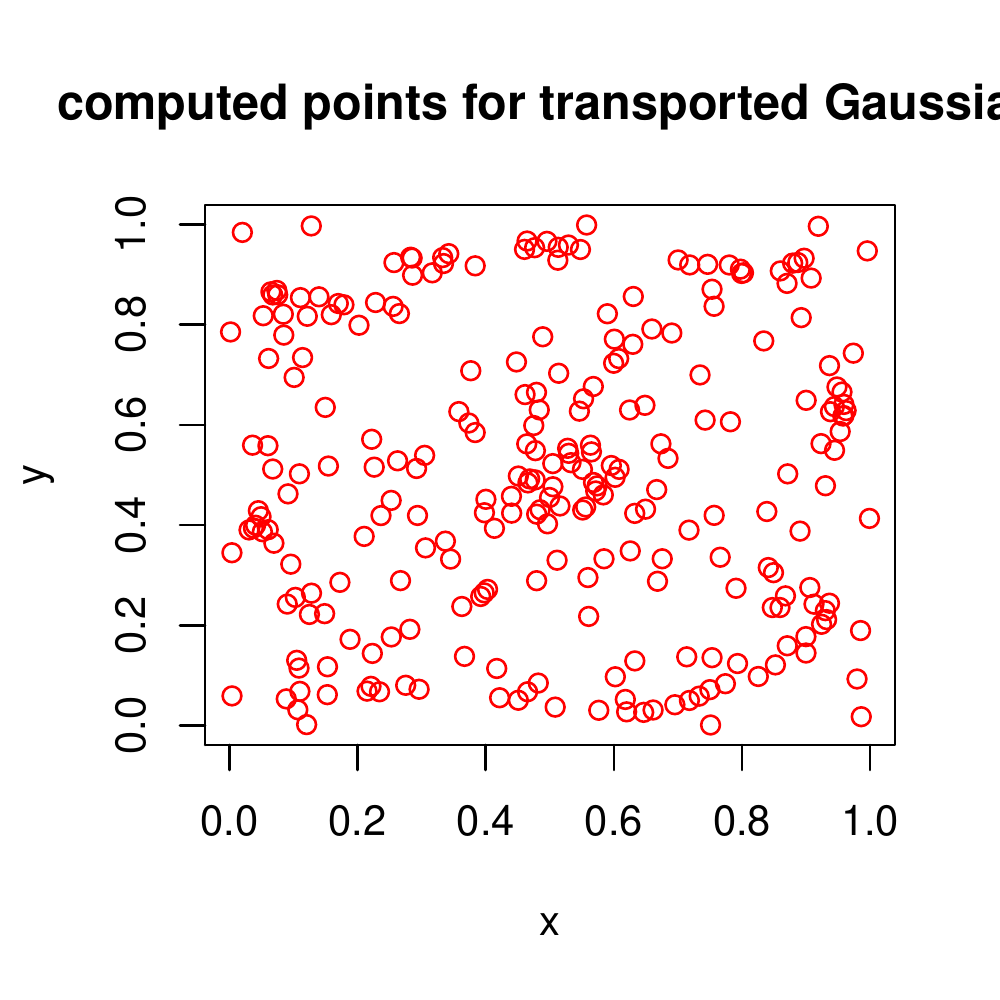} \includegraphics[width=0.45\linewidth]{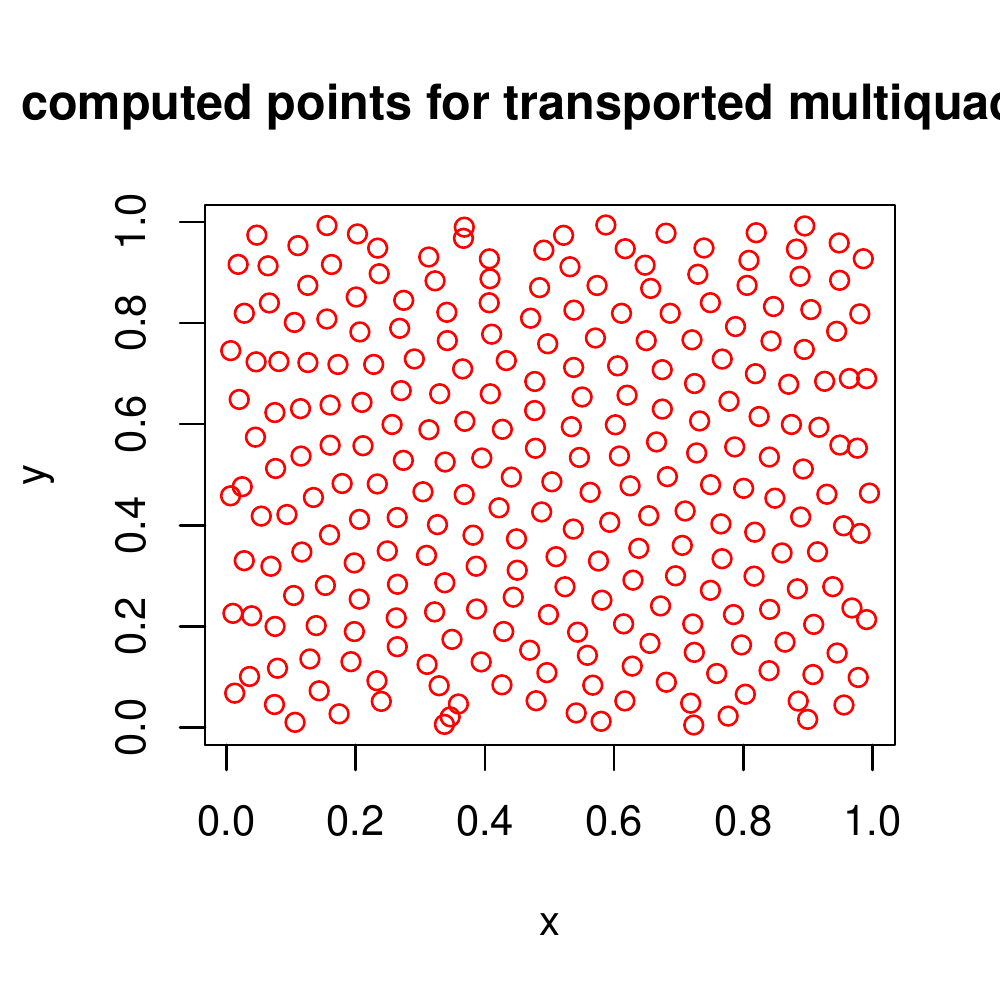} \includegraphics[width=0.45\linewidth]{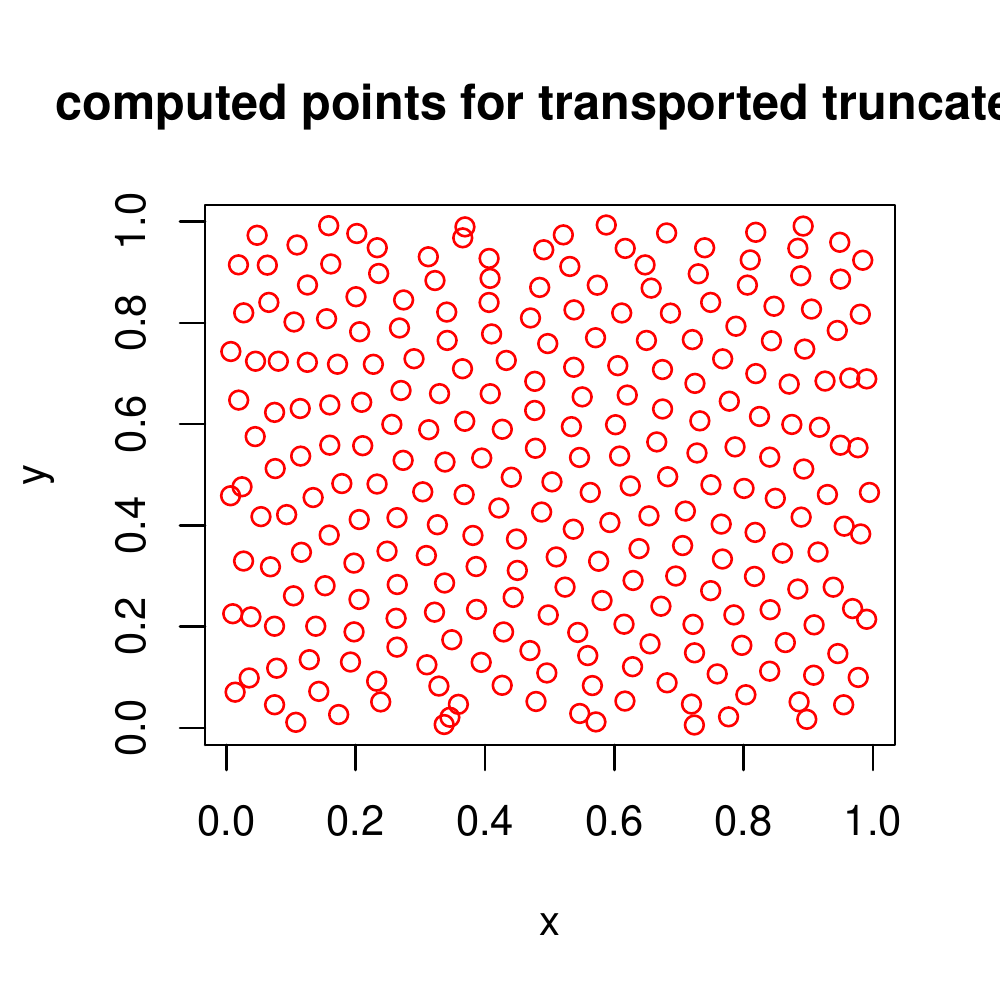} 
}
\caption{\label{DISTRIBTRANSPORTED} 
Random and numerical sequences for transported kernels with $N=256$}
\label{fig:DISTRIBTRANSPORTED}
\end{figure}


\vskip.05cm \par{\bf Acknowledgments.} 
The first author (PLF) gratefully acknowledges support from the Simons Center for Geometry and Physics, Stony Brook University, as well as from the Innovative Training Network (ITN) grant 642768 (ModCompShocks). This paper was written during the Academic year 2018-2019 when he was visiting the Courant Institute for Mathematical Sciences, New York University. 


\small


\end{document}